\newcommand{\N}{{\mathbb N}}
\newcommand{\Z}{\mathbb Z}
\newcommand{\Q}{\mathbb Q}
\newcommand{\R}{\mathbb R}
\newcommand{\lh}{\operatorname{lh}}
\newcommand{\conc}{{}^{\smallfrown}}
\newcommand{\Bai}{\ensuremath{ \N^{ \N }}}
\newcommand{\dom}{\operatorname{dom}}
\newcommand{\res}{\restriction}
\newcommand{\es}{\emptyset}
\newcommand{\rest}[1]{ |_{#1}}
\newcommand{\rao}{\rightarrow}
\newcommand{\lgrao}{\longrightarrow}
\newcommand{\Lglrao}{\Longleftrightarrow}
\newcommand{\lao}{\leftarrow}
\newcommand{\lgto}{\longmapsto}
\newcommand{\sub}{\subseteq}
\DeclareMathOperator{\Emb}{Emb}
\DeclareMathOperator{\Epi}{Epi}
\DeclareMathOperator{\Hrk}{rk_H}
\DeclareMathOperator{\FinSum}{\mathsf{FinSum}}
\DeclareMathOperator{\Supp}{Supp}
\newcommand{\StS}{\mathsf{StS}}
\newcommand{\WO}{\mathsf{WO}}
\newcommand{\scat}{\mathsf{Scat}}
\newcommand{\lin}{\mathsf{Lin}}
\newcommand{\Pieces}{\mathbf{Pieces}}
\newcommand{\DefPieces}{\mathbf{DefPieces}}
\newcommand{\strong}{\leq_{s}}
\newcommand{\inj}{\leq_{i}}
\begin{document}

\markboth{Riccardo Camerlo, Rapha\"{e}l Carroy, Alberto Marcone}
{When embeddability and epimorphism agree}

\catchline{}{}{}{}{}

\title{Linear orders: when embeddability and epimorphism agree}

\author{Riccardo Camerlo}
\address{Dipartimento di matematica, Universit\`a di Genova, Via Dodecaneso 35, 16146 Genova --- Italy; \email{camerlo@dima.unige.it}}

\author{Rapha\"el Carroy}
\address{Kurt G\"{o}del Research Center, W\"{a}hringer Strasse 25, 1090 Wien --- Austria; \email{raphael.carroy@univie.ac.at}}

\author{Alberto Marcone}
\address{Dipartimento di Scienze Matematiche, Informatiche e Fisiche, Universit\`a di Udine, Via delle Scienze 208, 33100 Udine --- Italy; \email{alberto.marcone@uniud.it}}

\maketitle

\begin{history}
\received{(Day Month Year)}
\revised{(Day Month Year)}
\end{history}

\begin{abstract}
When a linear order has an order preserving surjection onto each of its
suborders we say that it is \emph{strongly surjective}. We prove that the
set of countable strongly surjective linear orders is a
$\check{\mathrm{D}}_2(\mathbf{\Pi}^1_1)$-complete set. Using hypotheses
beyond ZFC, we prove the existence of uncountable strongly surjective
orders.
\end{abstract}

\keywords{Linear orders, epimorphisms, effective descriptive set theory, strongly surjective linear orders}

\ccode{Mathematics Subject Classification 2010: 03E15, 06A05, 03E65}

\section{Introduction}
There are two natural ways of comparing a pair of linear orders $L$ and $M$:
embeddability and epimorphism. We write $L \inj M$ when there is an order
preserving injection, also called an \emph{embedding}, from $L$ to $M$.
Similarly, $L \strong M$ stands for the existence of an order preserving
surjection, also called an \emph{epimorphism}, from $M$ onto $L$. The
equivalence relation associated to $\strong$ is written $\equiv_s$.

Using the axiom of choice, $L \strong M$ implies $L \inj M$, but the
embeddability relation $\inj$ is in general weaker than the relation
$\strong$ induced by epimorphisms. For example, the ordinal number $\omega $
embeds into $\omega+1$, but there is no epimorphism from $\omega +1$ onto
$\omega $.

There are however linear orders $M$ for which the relations $L \inj M$ and $L
\strong M $ turn out to be equivalent. The ordinals satisfying this property
have been characterized in \cite{cacama} (see Theorem \ref{ssordinal} below).
The aim of this article is to study the class of orders $M$ for which the
two notions coincide.

For the purpose of this paper, when talking about a linear order, we will always
assume that it is non-empty; in particular, if no contrary mention is given, when
a linear order is written as a sum $\sum_{i\in I} L_i$, all the summands are
assumed to be non-empty.

We are now ready to give our main definition.

\begin{definition} \label{thedefinition}
A linear order $M$ is \emph{strongly surjective} if, for any linear order
$L$, $L \inj M$ implies $L \strong M$; equivalently, if $M$ surjects
order-preservingly onto each of its suborders.
\end{definition}

The following characterization of strongly surjective ordinals is Corollary
29 of \cite{cacama}.

\begin{theorem}\label{ssordinal}
An ordinal is strongly surjective if and only if it is a finite multiple of
an indecomposable countable ordinal, that is, if it is of the form
$\omega^{\alpha }m$, for some $\alpha<\omega_1$ and $m>0$.
\end{theorem}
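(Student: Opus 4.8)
The plan is to reduce the statement to a question about epimorphisms between ordinals and then analyze these through block decompositions. First I would record two reductions. Since every suborder of a well-order is again well-ordered, any $L$ with $L\inj\beta$ (for $\beta$ an ordinal) is isomorphic to an ordinal $\gamma\le\beta$; hence $\beta$ is strongly surjective if and only if $\gamma\strong\beta$ for every $\gamma\le\beta$. Second, an epimorphism $\beta\twoheadrightarrow\gamma$ is the same thing as a partition of $\beta$ into $\gamma$ consecutive nonempty intervals, i.e.\ a decomposition $\beta=\sum_{\xi<\gamma}\delta_\xi$ with all $\delta_\xi\ge1$, equivalently a strictly increasing, continuous, cofinal map $g\colon\gamma\to\beta$ with $g(0)=0$. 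From this dictionary I would extract the key simplification: every ordinal surjects onto every successor ordinal below it (take the first $\gamma'$ blocks to be singletons and put the whole remainder into a final block), so only \emph{limit} targets $\gamma$ need to be examined.

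For the implication ``$\omega^\alpha m\Rightarrow$ strongly surjective'' (with $\alpha<\omega_1$) I would fix a limit $\gamma\le\omega^\alpha m$ and build the block decomposition by induction on $\gamma$. The two features that make this work are exactly the two clauses of the hypothesis: indecomposability of $\omega^\alpha$ gives the absorption law $\eta+\omega^\alpha=\omega^\alpha$ for $\eta<\omega^\alpha$, allowing blocks to be enlarged freely, while countability ($\alpha<\omega_1$) forces $\operatorname{cf}(\omega^\alpha m)=\omega=\operatorname{cf}(\gamma)$. Concretely I would interleave a cofinal $\omega$-sequence $c_n\nearrow\gamma$ with a cofinal $\omega$-sequence $b_n\nearrow\omega^\alpha m$, arranged so that each gap $\operatorname{ot}[c_n,c_{n+1})$ of $\gamma$ is an epimorphic image of the corresponding gap $\operatorname{ot}[b_n,b_{n+1})$ of $\beta$; the inductive hypothesis applied to the strictly smaller gaps, together with the absorption law, lets me choose the $b_n$ so that each $\beta$-gap is large enough. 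This interleaving is where the real work sits.

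For the converse I would argue by contraposition, producing for each ``bad'' $\beta$ a limit target it cannot surject onto. If $\beta$ is uncountable then $\beta\ge\omega_1$ and a cofinality count kills it: since an epimorphism onto a limit $\gamma$ forces $\operatorname{cf}(\beta)=\operatorname{cf}(\sum_{\xi<\gamma}\delta_\xi)=\operatorname{cf}(\gamma)$, one takes $\gamma=\omega_1$ when $\operatorname{cf}(\beta)=\omega$ and $\gamma=\omega$ otherwise, and in either case the cofinalities disagree (this generalizes the failures of $\omega+1\twoheadrightarrow\omega$ and $\omega_1\twoheadrightarrow\omega$). If instead $\beta$ is countable but its Cantor normal form has at least two terms, write $\beta=\omega^{\alpha_1}m_1+\rho$ with $0<\rho<\omega^{\alpha_1}$ and take the limit target $\gamma=\omega^{\alpha_1}m_1\le\beta$. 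In any putative decomposition the blocks must eventually land inside the top remainder interval of $\beta$, which has order type $\rho<\omega^{\alpha_1}$; but the corresponding final segment of $\gamma$ has order type a multiple of $\omega^{\alpha_1}$, hence $\ge\omega^{\alpha_1}$, and a strictly increasing map cannot send a longer ordinal into a shorter one. (When $\beta$ is a successor this degenerates to the top-element obstruction.)

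The main obstacle, as indicated, is the limit-target construction in the first implication: one must verify that the blocks dictated by $\gamma$ can actually be fitted inside $\omega^\alpha m$, and the delicate point is to see that the only obstructions are precisely a cofinality mismatch and a ``short tail'' in the Cantor normal form --- both of which are excluded by the hypothesis. Making the interleaving precise, and checking the base and limit stages of the induction, is the one genuinely technical part; the remaining steps are short once the block-decomposition dictionary and the successor-target reduction are in place.
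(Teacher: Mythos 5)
Your argument cannot be compared against a proof in this paper for a simple reason: the paper does not prove Theorem \ref{ssordinal} at all, but imports it as Corollary 29 of \cite{cacama}. So what you have written is a self-contained reconstruction of an imported result, and in outline it is correct. It is also very much in the spirit of the machinery this paper builds for its \emph{other} results: your gluing of block epimorphisms is Lemma \ref{definitionbypieces} (definition by pieces) specialized to ordinals; your interleaving of cofinal $\omega$-sequences is the ``mash'' of Proposition \ref{familymash}; and the same inductive pattern (absorption $\eta+\omega^{\alpha}=\omega^{\alpha}$ for $\eta<\omega^{\alpha}$, plus matching $\omega$-cofinalities guaranteed by countability) is exactly what the paper uses in Proposition \ref{prop:powersofZ} to prove that the powers $\Z^{\alpha}$ are strongly surjective. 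Your converse is essentially complete as written: the cofinality obstruction for uncountable $\beta$ is the paper's Proposition \ref{cofcoi} together with Proposition \ref{basics2}, and the Cantor-normal-form tail obstruction is sound, since a strictly increasing map must send a final segment of $\gamma=\omega^{\alpha_1}m_1$ of type at least $\omega^{\alpha_1}$ into the remainder interval of $\beta$, of type $\rho<\omega^{\alpha_1}$, which is impossible.

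One point in the forward direction needs more care than your phrasing suggests, and it is precisely where you locate the work. If you induct ``on $\gamma$'' with $\beta=\omega^{\alpha}m$ fixed, the inductive hypothesis provides epimorphisms \emph{from $\beta$} onto smaller ordinals, which is not what the interleaving consumes: you need epimorphisms from the gaps $[b_n,b_{n+1})$, which are proper intervals of $\beta$. So the statement being proved by induction must quantify over the source as well --- say, ``for every $\gamma'$ and every ordinal of the form $\omega^{\alpha'}m'\geq\gamma'$ with $\alpha'$ countable, there is an epimorphism onto $\gamma'$'' --- with the $b_n$ chosen via absorption so that each $\beta$-gap is itself of this form (of type $\omega^{\delta_n}$ with $\delta_n\nearrow\alpha$ when $\alpha$ is limit, or $\omega^{\alpha'}p_n$ when $\alpha=\alpha'+1$) and dominates the corresponding gap of $\gamma$; alternatively, induct on the exponent $\alpha$ rather than on $\gamma$. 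A related bookkeeping point: when $\gamma>\omega^{\alpha}$, no cofinal $\omega$-sequence in $\gamma$ can have all gaps below $\omega^{\alpha}$ (finitely many gaps swallow whole copies of $\omega^{\alpha}$), and these large gaps must be matched with whole copies or longer tails of $\beta$, again via the strengthened hypothesis. These are fixable precision issues, not flaws in the idea.
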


The rationals are also strongly surjective: indeed by Proposition
16(1) in \cite{cacama} $L \strong \Q$ for every countable linear order $L$. Up to
$\equiv_s$, $\Q$ is the only countable non-scattered strongly surjective
order (recall that $L$ is \emph{scattered} if $\Q \not \inj L$): see
Proposition \ref{nonscat} below.\medskip

Our main result is the following classification of the descriptive complexity
of the set of countable strongly surjective linear orders:

\begin{theorem} \label{mainintroduction}
The set of countable strongly surjective orders
is $\check{\mathrm{D}}_2(\mathbf{\Pi}^1_1)$-complete.
\end{theorem}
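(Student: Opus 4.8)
The plan is to prove the two halves of the completeness statement separately: membership in $\check{\mathrm{D}}_2(\mathbf{\Pi}^1_1)$ (the upper bound) and $\check{\mathrm{D}}_2(\mathbf{\Pi}^1_1)$-hardness (the lower bound), working throughout in the standard Polish space of countable linear orders with domain a subset of $\omega$. For the upper bound I would use that a set is $\check{\mathrm{D}}_2(\mathbf{\Pi}^1_1)$ precisely when it is the union of a $\mathbf{\Sigma}^1_1$ set and a $\mathbf{\Pi}^1_1$ set, and then split the strongly surjective orders according to scatteredness.

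The non-scattered half is analytic and essentially painless. Since every countable $M$ satisfies $M \strong \Q$ by \cite[Proposition 16(1)]{cacama}, the condition $\Q \strong M$ is equivalent to $M \equiv_s \Q$. Moreover $\Q \strong M$ already forces $M$ to be strongly surjective: given $L \inj M$, the order $L$ embeds into $\Q$ by universality, so strong surjectivity of $\Q$ yields a surjection $\Q \twoheadrightarrow L$, and composing it with a surjection $M \twoheadrightarrow \Q$ gives $L \strong M$. The converse for non-scattered $M$ is exactly Proposition \ref{nonscat}. Hence the non-scattered strongly surjective orders are precisely $\{M : \Q \strong M\}$, which is $\mathbf{\Sigma}^1_1$ as it asserts the existence of an order preserving surjection onto $\Q$.

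The real work in the upper bound is showing that $\{M : M \text{ scattered and strongly surjective}\}$ is $\mathbf{\Pi}^1_1$. Read naively, Definition \ref{thedefinition} is only $\mathbf{\Pi}^1_2$ (``for every suborder $N$ of $M$ there is an epimorphism $M \twoheadrightarrow N$''), and collapsing the inner existential quantifier to a coanalytic condition is the crux. Here I would develop the structure theory of scattered orders, using the Hausdorff decomposition and the associated $\mathbf{\Pi}^1_1$-rank, to prove a structural characterization of scattered strongly surjective orders generalizing Theorem \ref{ssordinal}: roughly, $M$ is strongly surjective iff it is a finite sum of suitably indecomposable blocks whose ranks compare correctly, so that the needed epimorphisms always exist. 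One then checks this characterization is arithmetic in the rank, hence $\mathbf{\Pi}^1_1$. Combining the two halves gives that the strongly surjective orders are the union of a $\mathbf{\Sigma}^1_1$ and a $\mathbf{\Pi}^1_1$ set, so they lie in $\check{\mathrm{D}}_2(\mathbf{\Pi}^1_1)$.

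For the lower bound I would reduce the canonical $\check{\mathrm{D}}_2(\mathbf{\Pi}^1_1)$-complete set $\{(A,B) : A \notin \WO \text{ or } B \in \WO\}$, building a continuous map $(A,B) \mapsto M(A,B)$ with $M(A,B)$ strongly surjective iff $A \notin \WO$ or $B \in \WO$. The design exploits the dichotomy just established: from $A$ I build a gadget that is non-scattered exactly when $A$ is ill-founded and that, whenever non-scattered, is forced into the $\equiv_s\Q$ class (hence strongly surjective), so $A \notin \WO$ always yields a strongly surjective order; from $B$ I build a scattered gadget of ordinal type modelled on the $\omega^\alpha m$ of Theorem \ref{ssordinal}, whose strong surjectivity is destroyed by a misplaced endpoint breaking indecomposability precisely when $B$ fails to be well-ordered. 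Amalgamating the two so that the scattered behaviour is visible only when $A$ is well-founded makes strong surjectivity of $M(A,B)$ equal to the required disjunction. I expect the main obstacle to be twofold: establishing the $\mathbf{\Pi}^1_1$ structural characterization in the scattered case, and engineering the hardness gadget so that the ``accidentally non-scattered'' case automatically lands in the $\Q$-class while the scattered case tracks $\WO$ exactly, both witnessed by one continuous reduction.
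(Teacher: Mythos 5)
Your overall architecture is the right one, and it matches the paper's: split $\StS$ along scatteredness, observe that the non-scattered strongly surjective orders are exactly $\{M : \Q \strong M\}$ and hence $\mathbf{\Sigma}^1_1$ (this half of your argument is correct and is essentially Proposition \ref{basichardness2} via Proposition \ref{nonscat}), and prove hardness by reducing $\{(A,B) : A \notin \WO \lor B \in \WO\}$. But the crux of the upper bound --- that the scattered strongly surjective orders form a $\mathbf{\Pi}^1_1$ set --- is not proved in your proposal, only wished for. You posit a structural characterization (``a finite sum of suitably indecomposable blocks whose ranks compare correctly, arithmetic in the rank'') generalizing Theorem \ref{ssordinal}, but no such classification is established by you, by the paper, or anywhere else; the class is quite rich (it contains all $\omega^\alpha m$ and their reverses, mixed sums like $(\omega^\gamma n)^\star + \omega^\delta m$, all $\Z^\alpha m$, and is closed under finite products and under left quotients by scattered orders), and nothing resembling a rank-arithmetic criterion is in sight. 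What the paper actually does is entirely different: it proves a uniformization theorem (Theorem \ref{theorem:boreluniformization}) stating that every scattered $K$ admits a $\Delta^1_1(K)$ map sending each $L$ to an element of $\Epi(L,K)$ when $L \strong K$ and to $\bot$ otherwise. That proof is an induction on the Hausdorff rank which leans on Landraitis' theorem that $\strong$ is a well-quasi-order on countable linear orders (to extract stable final segments, Lemma \ref{lem:landraitistrick}) and on effective descriptive set theory (effective Lusin--Novikov, Harrison's effective perfect set theorem). Only then does ``$K$ is scattered and strongly surjective iff $K$ is scattered and admits a strong $\Delta^1_1$-uniformization'' become $\Pi^1_1$, via the Spector--Gandy-type Fact \ref{Spector}, which converts the quantifier ``there exists an epimorphism'' into ``there exists an epimorphism in $\Delta^1_1(L,K)$''. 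Without some device of this kind the inner existential quantifier over epimorphisms cannot be collapsed, and your definition remains $\mathbf{\Pi}^1_2$.

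The lower bound sketch likewise has the paper's shape but leaves the gadget unengineered, and engineering it is the actual content. The paper's reduction is $f(K,L) = \zeta^K(1+L)\omega$, where $\zeta^K$ is a recursive implementation of the power $\Z^K$. Three non-trivial theorems make it work: $\Z^K \simeq \Z^\alpha\Q \equiv_s \Q$ when $K \notin \WO$ (Proposition \ref{prop:nonordinalpowerofZ}), so the ``accidentally non-scattered'' case lands in the $\Q$-class exactly as you require; $\Z^\alpha m$ is strongly surjective for every countable ordinal $\alpha$ (Proposition \ref{prop:powersofZ}, itself a delicate induction using the mashing technique); and, crucially for the direction you gloss over with ``amalgamating the two,'' Proposition \ref{quotient}: if $M$ is scattered and $MK$ is strongly surjective then so is $K$. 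This last fact (whose proof requires Lemma \ref{manytofew} and a careful case analysis) together with Corollary \ref{multiple} is what lets one pass back and forth between strong surjectivity of the amalgam $\Z^\alpha(1+L)\omega$ and of the $B$-gadget $(1+L)\omega$ when the $A$-gadget is scattered. So both of the genuinely hard steps --- the coanalytic bound in the scattered case and the quotient/product analysis behind the reduction --- are named as obstacles in your proposal but not overcome, and the route you suggest for the first one (a structural classification) is not a viable substitute for the paper's effective-uniformization argument.
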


Here $\check{\mathrm{D}}_2(\mathbf{\Pi}^1_1)$ is the class of sets which are
union of an analytic and a coanalytic set. The set we are interested in
belongs to this class because the set of scattered strongly surjective orders
is $\mathbf{\Pi}^1_1$, while the set of non-scattered strongly surjective
orders is $\mathbf{\Sigma}^1_1$. In fact they are both complete in their
respective classes (Corollary \ref{cor:scatstrongsurj} and Proposition
\ref{basichardness2}).

Our proof of the upper bound for scattered strongly surjective orders makes
an essential use of both effective descriptive set theory and the fact that
$\leq_s$ is a well quasi-order on the countable linear orders. The latter is
the main theorem of \cite{Landra1979} and \cite{cacama}.

Even if the study of the first two levels of the projective hierarchy is a
long-standing topic, examples of sets that are true $\boldsymbol \Delta^1_2$
(that is, $\boldsymbol \Delta^1_2$ but neither analytic nor coanalytic) are
very rare. The interest in these sets has recently been rekindled by
Fournier's study of the difference hierarchy of co-analytic sets
(\cite{fournier}). However, as far as we know, the set of countable strongly
surjective orders is the first concrete example of a
$\check{\mathrm{D}}_2(\mathbf{\Pi}^1_1)$-complete set that is not made so by
design. Furthermore, two natural examples of sets which are
complete in the dual class $\mathrm{D}_2(\mathbf{\Pi}^1_1)$ (consisting of
the intersections of an analytic and a coanalytic set) were found in
\cite{distances} and \cite{AC}.\bigskip

Here is the plan of the paper.\smallskip

In Section \ref{strongsurjectivity}, we prove some basic properties of
strongly  surjective linear orders, and we present a useful way of defining
epimorphisms by pieces, that we use throughout the paper.\smallskip

We start studying the descriptive complexity of the set of countable strongly
surjective linear orders in Section \ref{lower bounds}. The set $\lin$ of all
linear orders $\leq_K$ on a subset $K$ of $\N$ is Polish as it is a closed subspace of
$2^{ \N \times \N }$. We then call $\StS$ the set of strongly surjective
orders in $\lin$. Definition \ref{thedefinition} immediately gives an upper
bound, $\StS$ being indeed a $\Pi^1_2$ subset of $\lin$. We prove in this
section that $\StS$ is $\check{\mathrm{D}}_2(\mathbf{\Pi}^1_1)$-hard (Theorem
\ref{main}). Our proof uses a study of the powers of $ \Z $ and we notably
prove that $ \Z^K$ is strongly surjective for all countable
$K$.\smallskip

In Section \ref{analysis} we show that for any countable scattered linear
order $K$, there is a $\Delta^1_1(K)$ function that maps a linear order $L$
to an epimorphism from $K$ to $L$ when it exists, and to the refusing symbol
$\bot$ otherwise (Theorem \ref{theorem:boreluniformization}). As a corollary
we get a $\check{\mathrm{D}}_2(\mathbf{\Pi}^1_1)$ definition of $\StS$
(Corollary \ref{cor:upperbound}). This completes the proof of Theorem
\ref{mainintroduction}.\smallskip

Finally, Section \ref{unctbless} deals with uncountable linear orders. We
first prove that many concrete (e.g.\ $\R$, $\R \setminus \Q$, their finite
products, and also $\R^\N$, $\Q^\N$, $2^\alpha$ for $\alpha<\omega_1$) are
not strongly surjective, leaving open the problem of the provability in ZFC
of the existence of an uncountable strongly surjective linear order. By
contrast, we prove the existence of uncountable strongly surjective orders
assuming either PFA (Theorem \ref{underPFA}) or the existence of what we call
a Baumgartner tree (Theorem \ref{prop:underdiamond}). The latter hypothesis
is connected to the principle $\Diamond^+$, and thus orthogonal to PFA.

We conclude by discussing some problems that remain open and suggest new
lines of research.

\subsection{About notations}
Variable symbols $K,L,M$ always stand for linear orders. $L^\star$ stands for
the reverse of the linear order $L$. We call \emph{equimorphism} the
equivalence relation $\equiv_i$ associated to $\inj$, and we use the symbol
$\simeq$ to denote isomorphism. The notation for operations such as sums and
products on linear orders is standard; a reference is \cite{Rosens1982}. In
particular, $\sum_{i\in I} L_i$ is the sum ordered by $I$ of disjoint copies
of each $L_i$, in other words $\bigcup_{i\in I}\{i\}\times L_i$ ordered lexicographically. The multiplicative notation $LK$ stands for $K$ copies of $L$, i.e. $\sum_{k \in K} L$.

Given an order $(K,\leq_K)$, and $p\in K$, define $({\lao},p]_K$ as $\{n\in
K\mid n\leq_K p\}$ and order it with the order induced by $\leq_K$. Define in
a similar fashion the orders $({\lao},p)_K$, $(p,q)_K$, $(p,q]_K$, $[p,q)_K$,
$[p,q]_K$, $(p,{\rao})_K$ and $[p,{\rao})_K$. We allow the notation $[p,q]_K$
when $p=q$ as well, letting then $[p,p]_K=\{ p\} $. All these sets (including
$({\lao},{\rao})_K = K$) will be called \emph{intervals}.

A subset $K'$ of $K$ is \emph{convex} when $x,y \in K'$ and $x \leq_K y$
imply $[x,y]_K \subseteq K'$ (so every interval is convex, but not all convex
sets are intervals).

We call $\Emb(L,K)$ the set of all embeddings from $L$ to $K$, and
$\Epi(L,K)$ the set of all epimorphisms from $K$ onto $L$.


\section{Strong surjectivity} \label{strongsurjectivity}
We begin by stating some basic properties of strongly surjective orders.

\begin{proposition} \label{basics}~
\begin{arabiclist}
\item A linear order $L$ is strongly surjective if and only if $L^\star$ is.
\item If $L$ is strongly surjective and $M \inj L \strong M$, then $M$ is
    strongly surjective and $L \equiv_s M$.
\item If $L$ and $M$ are strongly surjective and $L \equiv_i M$, then $L
    \equiv_s M$.
\end{arabiclist}
\end{proposition}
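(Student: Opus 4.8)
The plan is to prove the three parts of Proposition \ref{basics} in order, each following fairly directly from the definition of strong surjectivity together with elementary manipulations of epimorphisms and embeddings.

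For part (1), I would argue that reversing an order interchanges embeddings and epimorphisms in a transparent way. Observe that $f \colon L \to M$ is an order-preserving injection if and only if the same function, viewed as a map $L^\star \to M^\star$, is again an order-preserving injection; likewise $g \colon M \to L$ is an epimorphism exactly when $g \colon M^\star \to L^\star$ is. Moreover the reversal operation is an involution on the class of all linear orders and $N \mapsto N^\star$ is a bijection of the suborders of $L$ onto the suborders of $L^\star$. Assuming $L$ is strongly surjective, to show $L^\star$ is strongly surjective I take any $N \inj L^\star$, pass to $N^\star \inj L$, apply strong surjectivity of $L$ to get $N^\star \strong L$, and then reverse back to obtain $N \strong L^\star$. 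The implication in the other direction is identical since $(L^\star)^\star = L$, so the equivalence follows.

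For part (2), the hypothesis gives embeddings and epimorphisms $M \inj L$ and $L \strong M$. The goal is to show $M$ surjects onto each of its suborders, so I fix an arbitrary $N \inj M$. Then $N \inj M \inj L$, so by transitivity of $\inj$ we have $N \inj L$, and strong surjectivity of $L$ yields $N \strong L$. Composing this epimorphism $L \to N$ with the given epimorphism $M \to L$ coming from $L \strong M$ produces an epimorphism $M \to N$, witnessing $N \strong M$. Since $N$ was an arbitrary suborder of $M$, this shows $M$ is strongly surjective. The only point requiring a word is that $\strong$ is transitive, i.e.\ that the composition of two epimorphisms is an epimorphism, which is immediate as surjectivity and order-preservation are both preserved under composition.

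For part (3), from $L \equiv_i M$ I have $L \inj M$ and $M \inj L$. Since $M$ is strongly surjective and $L \inj M$, the definition gives $L \strong M$; symmetrically, since $L$ is strongly surjective and $M \inj L$, I get $M \strong L$. Together these two epimorphisms yield $L \equiv_s M$. I do not expect a genuine obstacle here: all three parts are bookkeeping with the definitions, and the only facts that must be kept straight are that $\inj$ and $\strong$ are each transitive and reflexive and that reversal swaps the roles of injections and surjections while being an involution. The subtlest of the three is part (1), where care is needed to check that reversal really does send an epimorphism onto an epimorphism of the reversed orders, but this is a routine verification.
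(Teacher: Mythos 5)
Your proof is correct and takes essentially the same approach as the paper: part (2) is exactly the paper's argument (embed $N \inj M \inj L$, apply strong surjectivity of $L$ to get an epimorphism $L \to N$, and compose with the epimorphism $M \to L$ given by $L \strong M$). Parts (1) and (3), which the paper dismisses as ``obvious'' and ``follows from the definition,'' are filled in by precisely the routine verifications you give, namely that reversal is an involution interchanging nothing essential about embeddings and epimorphisms, and that applying the definition twice in (3) yields $L \equiv_s M$.
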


\begin{proof}
(1) is obvious. (2) Let $K \inj M$. Since $M \inj L$ and $L$ is strongly
surjective, there is an epimorphism $L\to K$.
As there is also an epimorphism $M \to L$, this yields $K \strong
M$. (3) follows from the definition of strongly surjective.
\end{proof}

Part (3) of Proposition \ref{basics} states that in any class of equimorphism
there is at most one $\equiv_s$-class of strongly surjective orders. However,
not every class of equimorphism contains a strongly surjective order. Indeed,
for an ordinal number $\alpha $ the classes of equimorphism, isomorphism and
bi-epimorphism coincide. So if $\alpha$ is not of the form given by Theorem
\ref{ssordinal}, its equimorphism class does not contain any strongly
surjective order.

The results of \cite{cacama} easily yield the following
characterizations of countable strongly surjective linear orders that are not
scattered:

\begin{proposition}\label{nonscat}
Let $L$ be a countable non-scattered linear order. The following are
equivalent:
\begin{arabiclist}
  \item $L$ is strongly surjective;
  \item $\Q \strong L$;
  \item $L$ has no initial or final segment which is scattered.
\end{arabiclist}
\end{proposition}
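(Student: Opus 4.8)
The plan is to prove the three implications $(i)\Rightarrow(ii)\Rightarrow(iii)\Rightarrow(i)$, leaning on the cited facts from \cite{cacama}, in particular that $\Q$ is strongly surjective and that $L\strong\Q$ for every countable $L$.

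First, $(i)\Rightarrow(ii)$ is immediate: if $L$ is non-scattered, then by definition $\Q\inj L$, and since $L$ is strongly surjective this gives $\Q\strong L$. Next, for $(ii)\Rightarrow(iii)$ I would argue contrapositively. Suppose $L$ has an initial segment $I$ that is scattered (the final segment case is symmetric, using Proposition \ref{basics}(1)). Any epimorphism $f\colon L\to\Q$ restricts to an order preserving surjection from $L$ onto $\Q$; the preimage of any proper initial segment of $\Q$ is an initial segment of $L$. The point is that $\Q$ has no least element and is densely ordered, so an epimorphism onto $\Q$ cannot have an initial segment of the domain mapping onto a ``small'' piece in a way compatible with $I$ being scattered. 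More precisely, I would show that since $I$ is an initial segment of $L$ and $\Q$ is non-scattered with no scattered initial segment itself, the image $f(I)$ would be a non-empty initial segment of $\Q$; being a continuous-image-like initial segment it must itself be non-scattered (as any proper initial segment of $\Q$ determined by a cut is either empty or order-isomorphic to a copy of $\Q$-like piece), forcing $\Q\inj f(I)$ and hence, pulling back, $\Q\inj I$, contradicting that $I$ is scattered.

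The implication $(iii)\Rightarrow(i)$ is where the real work lies, and I expect it to be the main obstacle. Assume $L$ has no scattered initial or final segment; I must show $L$ is strongly surjective, i.e.\ $K\inj L$ implies $K\strong L$ for \emph{every} linear order $K$. The strategy is to use condition $(iii)$ to decompose $L$ into densely many non-scattered convex blocks. Since $L$ is non-scattered we can fix a copy of $\Q$ inside $L$; condition $(iii)$ guarantees that the convex pieces of $L$ cut out by this copy are, on both ends and throughout, never scattered, so each relevant convex block $C$ of $L$ satisfies $\Q\strong C$ by the same facts from \cite{cacama}. Given $K\inj L$, I would build the desired epimorphism $L\to K$ piecewise: partition $L$ into convex blocks indexed by a copy of $\Q$, map these blocks onto the corresponding pieces of $K$ using that each block admits an epimorphism onto $\Q$ and that $K$ embeds densely, and then glue. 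The delicate point is ensuring the gluing is globally order preserving and genuinely surjective onto all of $K$, including handling endpoints of $K$ and the possibility that $K$ has scattered parts that must be covered by non-scattered blocks of $L$; here the hypothesis that $L$ has no scattered end segment is exactly what lets the outermost blocks absorb any initial or final scattered behaviour of $K$.

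Throughout, the combinatorial core is the observation that a non-scattered linear order without scattered initial or final segment behaves, up to $\equiv_s$, like $\Q$ with each point ``blown up'' into a non-scattered piece, and that this is precisely the structure needed to surject onto an arbitrary suborder. I would therefore expect to invoke \cite[Proposition 16]{cacama} repeatedly to obtain the block-wise epimorphisms onto $\Q$, and to combine them with the density of $\Q$ to route an arbitrary $K$ through these blocks. The cleanest write-up likely factors through showing $L\equiv_s\Q$ is \emph{not} required, but rather that $L$ dominates $\Q$ in both the $\strong$ and $\inj$ senses simultaneously, which together with Proposition \ref{basics} closes the argument.
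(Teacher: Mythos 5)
Your implications (i)$\Rightarrow$(ii) and (ii)$\Rightarrow$(iii) are correct: the first is immediate, and your contrapositive argument for the second works --- the image of a non-empty scattered initial segment $I$ under an epimorphism $f\colon L\to\Q$ is a non-empty initial segment of $\Q$, hence contains a copy of $\Q$, and choosing one $f$-preimage for each of its points embeds $\Q$ into $I$, a contradiction. (This is in fact a self-contained proof of one direction of \cite[Proposition 17]{cacama}, which the paper merely cites.) The genuine gap is in (iii)$\Rightarrow$(i). Your key structural claim there --- that condition (iii) forces the convex blocks of $L$ cut out by a copy of $\Q$ to be non-scattered ``on both ends and throughout'' --- is false: (iii) constrains only initial and final segments, not interior convex pieces. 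For example, $L=\Q+\Z+\Q$ is countable, non-scattered, and has no scattered initial or final segment, yet the natural copy of $\Q$ (the two outer summands) cuts out the interior block $\Z$, which is scattered and admits no epimorphism onto $\Q$ (such an epimorphism would pull back to an embedding of $\Q$ into $\Z$). So your block-by-block routing of an arbitrary suborder $K$ through copies of $\Q$ collapses exactly on such blocks, and nothing in your sketch explains how scattered interior blocks of $L$ could cover the corresponding pieces of $K$.

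What (iii)$\Rightarrow$(i) actually needs is first (iii)$\Rightarrow$(ii), and the right decomposition for that is not an arbitrary copy of $\Q$ but the scattered condensation: collapse each maximal scattered convex subset of $L$ to a point. The quotient is countable; it is densely ordered, since two adjacent classes would union to a larger scattered convex set; and by (iii) it has no first or last element, since a minimum class would be a scattered initial segment. Hence the quotient is isomorphic to $\Q$, and the quotient map itself witnesses $\Q\strong L$. Once (ii) is available, no piecewise gluing is needed at all: any $K\inj L$ is countable, so $K\strong\Q$ by \cite[Proposition 16(1)]{cacama}, and composing the epimorphisms $L\to\Q\to K$ gives $K\strong L$. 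Equivalently, apply Proposition \ref{basics}(2) to the strongly surjective order $\Q$, using $L\inj\Q$ (countability of $L$) together with $\Q\strong L$; note that this requires $L\inj\Q$, not $\Q\inj L$ as your closing remark suggests. This composition is essentially the paper's own proof, which establishes (i)$\Leftrightarrow$(ii) via the strong surjectivity of $\Q$, the equimorphism of all countable non-scattered orders, and Proposition \ref{basics}, and delegates (ii)$\Leftrightarrow$(iii) entirely to \cite[Proposition 17]{cacama}.
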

\begin{proof}
The equivalence of (1) and (2) follows because all countable non-scattered
linear orders are equimorphic and $\Q$ is strongly surjective. By the above
observation $L$ is strongly surjective if and only if $L \equiv_s \Q$, which
in turn is equivalent to $\Q \strong L$ because $L \strong \Q$ for every
countable $L$ by Proposition 16(1) in \cite{cacama}.

The equivalence of (2) and (3) follows from Proposition 17 in \cite{cacama}.
\end{proof}

\begin{definition}
Given a linear order $L$ without a maximum, the {\it cofinality} of $L$,
denoted $cof(L)$, is the smallest ordinal number $\alpha $ such that there
exists an increasing function $\alpha\to L$ unbounded above in $L$.

Similarly, for a linear order $L$ without a minimum, the \emph{coinitiality}
of $L$, denoted $coi(L)$, is the reverse $\alpha^{\star }$ of the smallest
ordinal $\alpha $ such that there exists an increasing function
$\alpha^{\star }\to L$ unbounded below in $L$. Equivalently,
$coi(L)=(cof(L^{\star }))^{\star }$.

Recall that $L$ is \emph{short} means that $\omega_1 \not\inj L$ and
${\omega_1}^\star \not\inj L$.
\end{definition}

Recall the following fact (\cite{cacama}, Fact 14(5)):

\begin{proposition}\label{cofcoi}
If $K$ and $L$ have no maximum and $K \strong L$, then $cof(K) = cof(L)$.
Similarly, if $K,L$ have no minimum and $K \strong L$, then
$coi(K)=coi(L)$.
\end{proposition}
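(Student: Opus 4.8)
The plan is to fix a witnessing epimorphism $f\colon L\to K$ (since $K\strong L$ means there is an order preserving surjection from $L$ onto $K$) and to prove the two inequalities $cof(K)\leq cof(L)$ and $cof(L)\leq cof(K)$ separately; the coinitiality statement will then come for free by passing to reverses. Throughout I use that both $K$ and $L$ are assumed to have no maximum, so that $cof$ is defined on each.

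For $cof(K)\leq cof(L)$ I would push a cofinal sequence forward. Let $\beta=cof(L)$ and let $g\colon\beta\to L$ be increasing and unbounded above. Then $f\circ g\colon\beta\to K$ is order preserving, and it is unbounded above in $K$: given $k\in K$, surjectivity of $f$ yields $l$ with $f(l)=k$, and some $g(\xi)\geq_L l$ gives $f(g(\xi))\geq_K k$. The map $f\circ g$ need not be injective, but its image is a cofinal subset of $K$ that is the order preserving image of an ordinal, hence well ordered of order type $\leq\beta$; this forces $cof(K)\leq\beta=cof(L)$.

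The reverse inequality is the main obstacle, since here one must lift a cofinal sequence through a surjection that is not injective. Let $\gamma=cof(K)$ and let $h\colon\gamma\to K$ be increasing and unbounded above. Using surjectivity (and choice) pick, for each $\xi<\gamma$, a point $l_\xi\in L$ with $f(l_\xi)=h(\xi)$. The key observation is that $\xi\mapsto l_\xi$ is automatically strictly increasing: for $\xi<\xi'$ we have $f(l_\xi)=h(\xi)<_K h(\xi')=f(l_{\xi'})$, and since $f$ is order preserving the relation $l_{\xi'}\leq_L l_\xi$ would give $f(l_{\xi'})\leq_K f(l_\xi)$, a contradiction, so $l_\xi<_L l_{\xi'}$. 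The sequence is also unbounded above in $L$: a bound $l^\ast$ would give $h(\xi)=f(l_\xi)\leq_K f(l^\ast)$ for every $\xi$, contradicting unboundedness of $h$. Hence $cof(L)\leq\gamma=cof(K)$, and combining the two inequalities gives $cof(K)=cof(L)$.

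Finally, for the coinitiality statement I would observe that $K\strong L$ if and only if $K^\star\strong L^\star$ (reverse the witnessing epimorphism), and that $K,L$ have no minimum exactly when $K^\star,L^\star$ have no maximum. Applying the part already proved to $K^\star,L^\star$ yields $cof(K^\star)=cof(L^\star)$, and since $coi(K)=(cof(K^\star))^\star$ by definition, we conclude $coi(K)=coi(L)$.
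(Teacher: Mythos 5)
Your proof is correct. Note that the paper itself gives no proof of this proposition: it is recalled as Fact 14(5) of \cite{cacama}, so there is no in-paper argument to compare against; your write-up in effect supplies the standard proof that the authors outsource. Both directions are sound: pushing a cofinal sequence forward through the epimorphism and passing to the (well-ordered, type $\leq cof(L)$) image handles $cof(K)\leq cof(L)$, and the lifting argument for $cof(L)\leq cof(K)$ is right --- the key point that any choice of preimages of a strictly increasing unbounded sequence is automatically strictly increasing and unbounded is exactly what makes a non-injective surjection harmless here. The only implicit step worth flagging is that you take the witnessing map $h\colon cof(K)\to K$ to be \emph{strictly} increasing; this is legitimate, since replacing a weakly increasing unbounded map by the increasing enumeration of its image only decreases the ordinal, so the minimal ordinal in the definition of cofinality is attained by a strictly increasing map. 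The reduction of the coinitiality clause to the cofinality clause via $K\mapsto K^\star$ is also exactly as one would expect, matching the paper's own convention $coi(L)=(cof(L^{\star}))^{\star}$.
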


\begin{proposition}\label{basics2}~
\begin{arabiclist}
\item If a strongly surjective order has a minimum, then it is a
    well-order. If it has a maximum, then it is the reverse of a
    well-order.
\item A strongly surjective linear order that is not an ordinal has
    coinitiality $\omega^{\star }$. Similarly, a strongly surjective linear
    order that is not the reverse of an ordinal has cofinality $\omega$.
\item Every strongly surjective linear order is short.
\item The cardinality of a strongly surjective linear order cannot exceed
    the continuum
\end{arabiclist}
\end{proposition}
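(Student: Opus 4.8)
The plan is to treat the four parts in turn, each time producing an epimorphism onto a carefully chosen suborder and then reading off the conclusion, with Proposition \ref{cofcoi} doing the real work. For (1), suppose $M$ is strongly surjective with a minimum $0$, and assume toward a contradiction that $M$ is not a well-order, so that $\omega^{\star}\inj M$. Fix a suborder $A\simeq\omega^{\star}$; by strong surjectivity there is an epimorphism $g\colon M\to A$. Since $0$ is the minimum of $M$ and $g$ is order preserving and onto, $g(0)$ is forced to be the minimum of $A$: for every $a\in A$, choosing $m$ with $g(m)=a$ gives $g(0)\leq g(m)=a$. But $A\simeq\omega^{\star}$ has no minimum, a contradiction; hence $M$ is a well-order. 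The statement about a maximum follows by applying this to $M^{\star}$, which is strongly surjective by Proposition \ref{basics}(1).

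For (2), suppose $M$ is strongly surjective and not an ordinal, i.e.\ not a well-order, so again $\omega^{\star}\inj M$; by (1), $M$ has no minimum, so $coi(M)$ is defined. Fix $A\simeq\omega^{\star}$ and an epimorphism $M\to A$, that is $A\strong M$. Neither $A$ nor $M$ has a minimum, so Proposition \ref{cofcoi} yields $coi(M)=coi(A)=\omega^{\star}$. The cofinality statement is the dual, obtained by applying this to $M^{\star}$.

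For (3), I show $\omega_1\not\inj M$; the assertion $\omega_1^{\star}\not\inj M$ then follows by passing to $M^{\star}$. If $M$ has a maximum, then by (1) $M=N^{\star}$ for a well-order $N$, so $N=M^{\star}$ is a strongly surjective ordinal, hence countable by Theorem \ref{ssordinal}; thus $M$ is countable and $\omega_1\not\inj M$. If $M$ has no maximum and $\omega_1\inj M$, then strong surjectivity gives an epimorphism $M\to\omega_1$, i.e.\ $\omega_1\strong M$; as neither $\omega_1$ nor $M$ has a maximum, Proposition \ref{cofcoi} forces $cof(M)=cof(\omega_1)=\omega_1$. But $M$ has no maximum, so it is not the reverse of an ordinal, and (2) gives $cof(M)=\omega$, a contradiction. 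Hence $M$ is short.

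For (4), by (3) the order $M$ is short, so the claim reduces to the purely combinatorial statement that a short linear order has cardinality at most $2^{\aleph_0}$. I would first pass to the Dedekind completion $\bar M$, which remains short (an $\omega_1$- or $\omega_1^{\star}$-chain of cuts yields one in $M$ by inserting an element of $M$ strictly between consecutive cuts) and satisfies $|M|\leq|\bar M|$. Working in $\bar M$, I would build a binary tree of nested intervals isolating each point along a branch; shortness guarantees that every branch has countable length, since a strictly increasing (resp.\ decreasing) $\omega_1$-chain of left (resp.\ right) endpoints would embed $\omega_1$ (resp.\ $\omega_1^{\star}$). A binary tree all of whose branches have countable length has at most $\sum_{\alpha<\omega_1}2^{|\alpha|}=2^{\aleph_0}$ branches, and the map sending each point to its branch is injective, giving $|M|\leq|\bar M|\leq 2^{\aleph_0}$. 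The first three parts are quick once the epimorphism onto the right suborder is in hand; the main obstacle is part (4), where the reductions to shortness are routine but the cardinality bound requires this separate tree argument, whose delicate point is arranging the splitting so that every element is isolated by a branch of countable length.
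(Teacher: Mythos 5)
Your proof is correct, and for parts (1)--(3) it is essentially the paper's: both arguments hinge on pushing an extremum through an epimorphism (for (1)) and on Proposition \ref{cofcoi} applied to $\omega^\star \strong M$, respectively $\omega_1 \strong M$ (for (2) and (3)). Two points of divergence are worth noting. First, in (3) you dispose of the case where $M$ has a maximum by appealing to Theorem \ref{ssordinal} (strongly surjective ordinals are countable); the paper does not need the classification theorem here --- it observes that an epimorphic image of an order with a maximum again has a maximum, so every suborder of a strongly surjective order either has a maximum or, by (1), (2) and Proposition \ref{cofcoi}, has cofinality $\omega$, and $\omega_1$ satisfies neither. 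Second, and more substantially, for (4) the paper simply cites Urysohn's theorem (\cite[Theorem 9.28]{Rosens1982}) that short linear orders have cardinality at most $2^{\aleph_0}$, whereas you sketch a proof of that theorem: Dedekind completion (correctly checked to remain short), a tree of nested convex sets, the observation that shortness bounds every branch to countable length, and the count $\sum_{\alpha<\omega_1}2^{|\alpha|}=2^{\aleph_0}$. This sketch is the standard proof and is completable; the delicate point you flag is real but manageable --- split each convex set into two \emph{disjoint} pieces (a jump $\{a,b\}$ splits into $\{a\}$ and $\{b\}$), so that the tree sets containing a given point form a chain and the point-to-branch map is genuinely injective rather than merely two-to-one. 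What your route buys is self-containedness; what the paper's buys is brevity, outsourcing exactly the part of the argument that is classical.
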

\begin{proof}
(1) If $L$ is a strongly surjective order with a minimum and $K$ is a
non-empty subset of $L$, then $K$ must have a minimum, otherwise $K \strong
L$ would be impossible.
Similarly for the maximum.

(2) If $L$ is an ill-founded strongly surjective order, then $\omega^{\star
}\inj L $ and so $\omega^{\star }\strong L$. It also follows that $L$ does
not have a minimum. So $coi(L)=\omega^{\star }$ by Proposition \ref{cofcoi}.
Similarly for the cofinality.

(3) By Proposition \ref{cofcoi}, (1) and (2) any suborder of a strongly
surjective order $L$ must have either a maximum or cofinality $\omega$.
Therefore $\omega_1 \not\inj L$. Similarly ${\omega_1}^\star \not\inj L$.

(4) follows from (3) and a classical theorem of Urysohn's (\cite{Rosens1982}, Theorem
9.28) about short linear orders.
\end{proof}

It is useful to give a name to the orders satisfying the necessary conditions
for strong surjectivity given in the first two items of Proposition
\ref{basics2}.

\begin{definition}
A linear order $L$ is \emph{admissible} if the following conditions hold:
\begin{arabiclist}
\item $L$ has a miminum or it has coinitiality $\omega^{\star}$;
\item $L$ has a maximum or it has cofinality $\omega$.
\end{arabiclist}
\end{definition}

So an order is short if and only if it and all of its suborders are
admissible.


\subsection{Defining epimorphisms}
Given non-empty convex subsets $K_0$ and $K_1$ of $K$, say that $K_0\leq K_1$
when for all $x\in K_0$ and $y\in K_1$ we have $x\leq_Ky$; similarly define
$K_0<K_1$ if for all $x\in K_0$ and all $y\in K_1$ one has $x<_Ky$. Say that
$K_0$ and $K_1$ are \emph{adjacent} if $K_0\leq K_1$ and there is no $x\in K$
satisfying $K_0<\{x\}<K_1$. Say they are \emph{connected} when $K_0 \leq K_1$
but $K_0 \nless K_1$ (so that they share an element).

An epimorphism can be defined on a covering by convex sets.

\begin{definition}\label{def:nice}
We say that a family of non-empty convex sets $(K_i)_{i\in I}$ of an order
$K$ is \emph{nice} if and only if the index set $I$ is an interval of $\Z$,
the family $(K_i)_{i\in I}$ is unbounded above and below in $K$ and for all
$i\in I$, $K_i\leq K_{i+1}$ holds.

We say that $(K_i)_{i\in I}$ is a \emph{nice covering of $K$} if it is a
covering of $K$ by a nice family.
\end{definition}

\begin{lemma}[Definition by pieces]\label{definitionbypieces}
Suppose we have $(K_i)_{i\in I}$ a nice family of convex subsets of $K$ and
$(L_i)_{i\in I}$ a nice covering of $L$ satisfying that for any $i\in I$ when
$K_i$ and $K_{i+1}$ are not adjacent then $L_i$ has a maximum or $L_{i+1}$
has a minimum, and if $K_i$ and $K_{i+1}$ are connected, so are $L_i$ and
$L_{i+1}$.

If for all $i\in I$, $L_i\strong K_i$ holds, then $L\strong K$.
\end{lemma}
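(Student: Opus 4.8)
The plan is to build an explicit epimorphism $f\colon K\to L$ by gluing together epimorphisms on the pieces. For each $i\in I$ fix some $f_i\in\Epi(L_i,K_i)$, that is, an order-preserving surjection $f_i\colon K_i\to L_i$, which exists because $L_i\strong K_i$. The observation that makes everything cohere is elementary: an order-preserving surjection sends an existing maximum to the maximum and an existing minimum to the minimum. Indeed, if $p=\max K_i$ then $f_i(x)\leq f_i(p)$ for all $x\in K_i$, so surjectivity forces $f_i(p)=\max L_i$, and symmetrically for minima. I would record this first, since it governs the behaviour of the $f_i$ at the boundaries of the pieces.

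Next I would define $f$ on $\bigcup_{i\in I}K_i$ by $f(x)=f_i(x)$ whenever $x\in K_i$, and check this is well defined. As the $K_i$ are convex and satisfy $K_i\leq K_{i+1}$, any two pieces that overlap meet in a single point, which for consecutive indices has the form $p=\max K_i=\min K_{i+1}$; so $K_i$ and $K_{i+1}$ are connected (a point lying in several pieces is treated by the same argument applied to each consecutive pair). The hypothesis then gives that $L_i$ and $L_{i+1}$ are connected as well, which means precisely $\max L_i=\min L_{i+1}$. By the observation above, $f_i(p)=\max L_i=\min L_{i+1}=f_{i+1}(p)$, so the two definitions agree and $f$ is well defined on $\bigcup_i K_i$.

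Then I would extend $f$ to the points not covered by the $K_i$. Using convexity of the pieces together with the fact that $(K_i)_{i\in I}$ is unbounded above and below, each such $x$ sits in a unique gap $K_i<\{x\}<K_{i+1}$; in particular $K_i$ and $K_{i+1}$ are not adjacent, so by hypothesis $L_i$ has a maximum or $L_{i+1}$ has a minimum. I set $f(x)=\max L_i$ in the first case and $f(x)=\min L_{i+1}$ otherwise, constant on each gap. The inequality $L_i\leq L_{i+1}$ coming from the niceness of the covering ensures that either candidate value lies above all of $f_i(K_i)$ and below all of $f_{i+1}(K_{i+1})$.

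Finally I would verify that $f$ is an epimorphism. Surjectivity is immediate, since $(L_i)_{i\in I}$ covers $L$ and each $f_i$ is onto $L_i$, so every point of $L$ is already attained on $\bigcup_i K_i$. Order preservation is a routine case analysis on two points $x<y$ according to their positions (same piece, different pieces, or one or both in gaps); in each case one reduces to $f_i$ being order-preserving together with the chain $L_i\leq L_{i+1}\leq\cdots$, which propagates by transitivity. The only genuinely delicate points are the two handled above, namely coherence of the definition where pieces overlap and the choice of values on the gaps, and these are exactly what the two structural hypotheses are designed to supply: connectedness is preserved from the $K_i$ to the $L_i$, and non-adjacency forces the existence of a boundary maximum or minimum.
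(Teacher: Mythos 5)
Your proposal is correct and follows essentially the same construction as the paper's proof: glue the epimorphisms $\sigma_i$ on the pieces, send each gap between non-adjacent $K_i$ and $K_{i+1}$ to $\max L_i$ or $\min L_{i+1}$, and use the connectedness hypothesis together with the fact that epimorphisms preserve extrema to check well-definedness on overlaps. You merely spell out a few routine verifications (preservation of extrema, uniqueness of the gap containing an uncovered point) that the paper leaves implicit.
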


\begin{proof}
Take $\sigma_i\in\Epi(L_i,K_i)$ and whenever $K_i$ and $K_{i+1}$ are not
adjacent let $l_i$ be the maximum of $L_i$ if it exists, or the minimum of
$L_{i+1}$ otherwise. Define then the map $\sigma:K\rao L$ as follows.
\[
\sigma(x)=\begin{cases}
\sigma_i(x)&\mbox{if }x\in K_i\mbox{ for some }i\in I\\
l_i& \mbox{if }K_i<\{x\}< K_{i+1}\mbox{ for some }i\in I
\end{cases}
\]
We defined $\sigma$ on every $K_i$ and on the convex sets between $K_i$ and
$K_{i+1}$, so on all of $K$. Let us first check that it is well-defined.
Suppose $x$ is in $K_i\cap K_{i+1}$ for some $i\in I$. Then, since $K_i\leq
K_{i+1}$, we have $K_i\cap K_{i+1}=\{x\}$, so that $x=\max K_i=\min K_{i+1}$
connects the two intervals. The hypothesis gives that $\max L_i=\min
L_{i+1}$, and as the maps $\sigma_i$ and $\sigma_{i+1}$ are epimorphisms we
have
\[
\sigma_i(x)=\sigma_i(\max K_i)=\max L_i=\min L_{i+1}=\sigma_{i+1}(\min K_{i+1})=\sigma_{i+1}(x),
\]
so $\sigma$ is indeed well-defined. Since the maps $\sigma_i$ are
epimorphisms and the sets $L_i$ form a nice covering of $L$, we finally
have $\sigma\in\Epi(L,K)$.
\end{proof}

In the above proof, we say that $\sigma$ is \emph{defined by pieces}.
Some specific operations come in
handy to define epimorphisms by pieces.

\begin{definition}
Given $K,L$ linear orders, $\sigma\in\Epi(L,K)$ and $l\in L$, we denote
$\sigma^l$ the following epimorphism:
\begin{align*}
\sigma^l:K&\lgrao ({\lao},l]_L\\
k&\lgto\begin{cases}
\sigma(k)&\mbox{if }\sigma(k)\leq_L l\\
l&\mbox{otherwise.}
\end{cases}
\end{align*}
Similarly we define
\begin{align*}
\sigma_l:K&\lgrao[l,{\rao})_L\\
k&\lgto\begin{cases}
\sigma(k)&\mbox{if }\sigma(k)\geq_L l\\
l&\mbox{otherwise.}
\end{cases}
\end{align*}
Given $l\leq_Ll'$, we also define
\begin{align*}
\sigma_l^{l'}:K & \lgrao [l,l']_L \\
k & \lgto
\begin{cases}
l & \mbox{if } \sigma (k)<l \\
\sigma (k) & \mbox{if } l\leq\sigma (k)\leq l' \\
l' & \mbox{if } l'<\sigma (k).
\end{cases}
\end{align*}
\end{definition}

\begin{proposition}[Family mash]\label{familymash}
Given linear orders $K$ and $L$, with $L$ admissible, if there is a nice
family \((K_i)_{i\in I}\) of \(K\) such that $L \strong K_i$ holds for all
$i\in I$, then we have $L \strong K$.
\end{proposition}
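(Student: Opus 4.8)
The plan is to reduce everything to the Definition by pieces lemma (Lemma \ref{definitionbypieces}): I will keep the given nice family $(K_i)_{i\in I}$ on the $K$-side and manufacture a nice covering $(L_i)_{i\in I}$ of $L$, indexed by the \emph{same} $I$, so that $L_i\strong K_i$ for all $i$ and the two gluing hypotheses of the lemma hold. For each $i$ fix an epimorphism $\tau_i\in\Epi(L,K_i)$, which exists since $L\strong K_i$. The clipping operations $\sigma^l,\sigma_l,\sigma^{l'}_l$ defined above, applied to $\tau_i$, then produce epimorphisms onto $[l,l']_L$, onto $({\lao},l]_L$ and onto $[l,{\rao})_L$; and every singleton $\{l\}$ satisfies $\{l\}\strong K_i$ via the constant map. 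Hence, whenever a piece $L_i$ is a closed interval, a closed-ended ray, or a single point, I obtain $L_i\strong K_i$ for free. The whole burden thus shifts to the \emph{shape} of the covering.

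The heart of the argument is building $(L_i)_{i\in I}$ so that every piece has one of these three shapes and so that consecutive pieces are always \emph{connected}, i.e.\ $\max L_i=\min L_{i+1}$ for every $i$ with $i,i+1\in I$. This single choice makes both gluing hypotheses automatic: if $K_i,K_{i+1}$ are connected we need $L_i,L_{i+1}$ connected, which holds by construction; and if $K_i,K_{i+1}$ are not adjacent we need $L_i$ to have a maximum or $L_{i+1}$ a minimum, which again holds since $\max L_i=\min L_{i+1}$ exists. To realize such a covering I treat the bottom and top ends of $I$ separately, invoking admissibility. At the top: if $I$ has a greatest index $b$, I let $L_b$ be $[c,\max L]_L$ when $L$ has a maximum and the final segment $[c,{\rao})_L$ otherwise, a single ray absorbing the whole cofinal end into one index; if $I$ has no greatest index, I match its upward $\omega$-part either to a strictly increasing cofinal sequence of cut points (when $L$ has no maximum, using $cof(L)=\omega$ from admissibility) or, when $L$ has a maximum, I cover $L$ up to $\max L$ with finitely many closed intervals and pad the remaining indices with the singleton $\{\max L\}$. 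The bottom end is handled symmetrically using condition (1) of admissibility ($L$ has a minimum or $coi(L)=\omega^\star$), via coinitial rays $({\lao},c]_L$, coinitial decreasing sequences of cut points, or singleton padding $\{\min L\}$. Any interior indices are filled by a finite increasing chain of closed intervals, again inserting singletons if $L$ lacks enough points. In every combination this yields a nice covering of $L$ with connected consecutive pieces, each of an allowed shape.

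It remains to apply Lemma \ref{definitionbypieces} with these $(K_i)_{i\in I}$ and $(L_i)_{i\in I}$, taking $\sigma_i\in\Epi(L_i,K_i)$ to be the relevant clipped (or constant) epimorphism. The compatibility of the $\sigma_i$ at a shared point $x=\max K_i=\min K_{i+1}$, when $K_i,K_{i+1}$ are connected, is automatic and is exactly the computation inside the proof of that lemma: since $\sigma_i$ is an epimorphism onto $L_i$ and $x=\max K_i$, we get $\sigma_i(x)=\max L_i=\min L_{i+1}=\sigma_{i+1}(x)$ by construction. As $L_i\strong K_i$ for all $i$ and $(L_i)_{i\in I}$ is a nice covering of $L$ meeting both gluing hypotheses, the lemma delivers $L\strong K$. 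The step I expect to be the real obstacle is the bookkeeping of the previous paragraph: one must reconcile the order type of the index interval $I$ (whether it has a least and/or greatest element) with the end structure of $L$ (whether $L$ has a minimum and/or maximum), and admissibility is precisely what makes all four combinations simultaneously realizable, a single ray collapsing an infinite cofinal or coinitial end into one index while singleton padding soaks up surplus indices whenever $L$ has an endpoint.
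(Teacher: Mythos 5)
Your proposal is correct and follows essentially the same route as the paper: both reduce to Lemma \ref{definitionbypieces} by matching the given nice family $(K_i)_{i\in I}$ with a connected covering of $L$ assembled from the clipped epimorphisms $(\tau_i)^l$, $(\tau_i)_l$, $(\tau_i)_l^{l'}$, constant maps onto singletons, and the cofinal/coinitial $\omega$-sequences that admissibility provides, with a case analysis driven by the end structure of $I$ and of $L$. The only difference is bookkeeping: you keep the index set and the $K$-side family intact, soaking up mismatches with singleton padding and whole-ray pieces, whereas the paper instead modifies the $K$-side family case by case (collapsing it to $(K_i,K_j)$ for finite $I$, to $(K^-,K_i,K^+)$ when $L$ has both endpoints, or absorbing an initial segment into $K_i'$ when $I$ has no maximum) --- two equivalent ways of reconciling the same data.
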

\begin{proof}
We may assume that $L$ is not a singleton, and so no $K_i$ is a singleton.
Moreover, we can suppose that $I$ has more than one element, otherwise if
$I=\{i\}$ then $K_i=K$ (because $K_i$ is convex and unbounded) and we are
done.

For each $i \in I$ fix $\sigma_i \in \Epi(L,K_i)$. We want to define $\sigma
\in \Epi(L,K)$ by pieces.

Suppose first that $I$ is finite, and let $i$ and $j$ be its minimum and
maximum. Notice that $(K_i, K_j)$ is nice. On the $L$
side, take any $l\in L$ and consider the nice connected covering
$(({\lao},l]_L, [l,{\rao})_L)$. Then we can use Lemma
\ref{definitionbypieces}, with $(\sigma_i)^l$ and $(\sigma_j)_l$ witnessing
$({\lao},l]_L \strong K_i$ and $[l,{\rao})_L \strong K_j$ respectively, to
define $\sigma$.

From now on, we suppose that $I$ is infinite. There are four cases.

\begin{arabiclist}
\item When $L$ has a minimum $l_0$ and a maximum $l_1$, choose $i
    \in I$ different from the minimum and maximum of $I$ (at most one of
    the extrema exists), and let $K^- = \{ k\in K\mid \{k\} < K_i\}$ and
    $K^+ = \{ k\in K\mid K_i < \{k\}\}$. The nice covering $(K^-, K_i,
    K^+)$ of $K$ and the connected covering $(\{l_0\}, L, \{l_1\})$ of $L$
    allow the definition by pieces of $\sigma$.

\item When $L$ has a minimum $\hat{l}$ and no maximum, we need to
    distinguish two subcases.

    If $I$ has a maximum $i$ we consider the nice covering $(K\setminus K_i,
    K_i)$ and the connected covering $(\{\hat{l}\}, L)$.

If instead $I$ has no maximum, by admissibility of $L$ let $\{l_n\}_{n \in
\N}$ be strictly increasing and cofinal in $L$. Fix $i \in
I$ and let $K_i'=
K_i \cup \{ k \in K \mid \{k\} < K_i\}$. Now consider the nice family
$(K_i', K_{i+1}, K_{i+2}, \ldots)$ and the nice connected covering
\[
([\hat{l},l_i]_L, [l_i, l_{i+1}]_L, [l_{i+1}, l_{i+2}]_L, \ldots).
\]
Using $(\sigma_i)^{l_i}$ and $(\sigma_j)^{l_j}_{l_{j-1}}$ for $j>i$ we
again get the definition by pieces of $\sigma$.

\item When $L$ has a maximum and no
    minimum, we can just mirror the previous case.

\item When $L$ has no extrema, we look for a connected nice
    covering $(L_i)_{i\in I}$ to match  $(K_i)_{i\in I}$. Take
    $(l_i)_{i\in\Z}$ strictly increasing, coinitial and cofinal in $L$. If
    $I=\Z$ then take $([l_i,l_{i+1}]_L)_{i\in I}$. If $I$ has a minimum
    $j$, take $L_j=({\lao}, l_j]_L$ and $([l_i, l_{i+1}]_L)_{i\geq j}$. If
    $I$ has a maximum $j$, take $([l_i, l_{i+1}]_L)_{i<j}$ and $L_j =
    [l_j,{\rao})_L$. In any case, we can use the
    appropriate $(\sigma_i)_{l_i}^{l_{i+1}}$, $(\sigma_j)_{l_j}$, and
    $(\sigma_j)^{l_j}$ to define $\sigma$ by pieces. 
\end{arabiclist}
\end{proof}

In the above proof we say that we \emph{mash} the family $(K_i)_{i\in I}$
onto $L$.

\begin{corollary}\label{cor:lessthanprod}
For $K$ and $L$ admissible, we have $L \strong LK$.
\end{corollary}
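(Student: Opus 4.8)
The plan is to obtain the epimorphism from Proposition \ref{familymash} (Family mash), using the admissibility of $L$ that it requires. Recall that $LK$ is $K$ copies of $L$, that is $LK=\sum_{k\in K}L$; for each $k\in K$ the copy $L_k=L\times\{k\}$ is a non-empty convex subset of $LK$ isomorphic to $L$, so $L\strong L_k$ is witnessed by this isomorphism. Thus it is enough to exhibit a nice family $(L_{k_i})_{i\in I}$ of $LK$ made of such copies, and the conclusion $L\strong LK$ will follow at once from Family mash.

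To build the family I would use the admissibility of $K$ to select base points $(k_i)_{i\in I}$, with $I$ an interval of $\Z$, that are increasing and both coinitial and cofinal in $K$, distinguishing four cases according to whether $K$ has a minimum and a maximum. If $K$ has both a minimum $a$ and a maximum $b$, take $I=\{0,1\}$ with $k_0=a$ and $k_1=b$ (or $I=\{0\}$ if $a=b$). If $K$ has a minimum $a$ but no maximum, then $cof(K)=\omega$ by admissibility, so I pick an increasing cofinal sequence $(c_n)_{n\geq1}$ and set $I=\N$, $k_0=a$, $k_n=c_n$. The case of a maximum and no minimum is symmetric, using $coi(K)=\omega^{\star}$ and $I$ the nonpositive integers. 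Finally, if $K$ has neither extremum, admissibility gives $coi(K)=\omega^{\star}$ and $cof(K)=\omega$, so there is an increasing sequence $(k_i)_{i\in\Z}$ that is coinitial and cofinal, and I take $I=\Z$.

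In each case $(L_{k_i})_{i\in I}$ is a nice family in the sense of Definition \ref{def:nice}: the index set is an interval of $\Z$; consecutive members satisfy $L_{k_i}\leq L_{k_{i+1}}$ since $k_i\leq k_{i+1}$; and because $(k_i)$ is coinitial and cofinal in $K$ the copies $L_{k_i}$ are coinitial and cofinal in $LK$, hence unbounded below and above. Applying Family mash to this family then yields $L\strong LK$.

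I do not expect a genuine obstacle here: the whole argument reduces to the trivial fact $L\strong L$ together with an invocation of Family mash. The only point requiring care is the bookkeeping of the index set $I$ so that the selected copies form a \emph{nice} family; in particular one should note that this family need not cover $LK$, as the proof of Family mash is precisely designed to fill the gaps between consecutive $L_{k_i}$ using the admissibility of $L$.
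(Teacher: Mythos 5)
Your proof is correct and follows essentially the same route as the paper: the paper's proof also takes $(k_i)_{i\in I}$ increasing, coinitial and cofinal in $K$ (using admissibility of $K$, with $I$ an interval of $\Z$) and mashes the nice family $(L\times\{k_i\})_{i\in I}$ of $LK$ onto $L$ via Proposition \ref{familymash}. Your case analysis on the extrema of $K$ and the remark that the family need not cover $LK$ are just explicit spellings-out of what the paper leaves implicit.
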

\begin{proof}
By admissibility of $K$, take $(k_i)_{i\in I}$ increasing, coinitial and
cofinal in $K$ for some $I$ which is an interval in $\Z$. Mash the nice
family $(L\times\{k_i\})_{i\in I}$ of $LK$ onto $L$ using Proposition
\ref{familymash}.
\end{proof}


\subsection{Operations on strongly surjective orders} \label{operations}

In general, the sum of strongly surjective orders is not strongly surjective:
consider for example a countable ordinal whose Cantor normal form has two summands and
use Theorem \ref{ssordinal}. We now show instead that the product of two
strongly surjective orders is still strongly surjective, and that the
left-quotient of a strongly surjective order by a scattered order is also
strongly surjective. First note the following.

\begin{proposition} \label{ssrestricted}
Let $I$ be any order and for each $i\in I$, let $L_i$ be a strongly
surjective order. Then $L=\sum_{i\in I}L_i$ is strongly surjective if and
only if for every non-empty $J\subseteq I$ there is an epimorphism from $L$
onto $L_J=\sum_{j\in J}L_j$.
\end{proposition}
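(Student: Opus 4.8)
The plan is to handle the two implications separately; the forward one is immediate, so the backward one carries the real content. For the forward implication, assume $L$ is strongly surjective and fix a non-empty $J\subseteq I$. The sum $L_J=\sum_{j\in J}L_j$ is a suborder of $L$ via the obvious inclusion, so $L_J\inj L$; strong surjectivity of $L$ then yields $L_J\strong L$, which is exactly an epimorphism from $L$ onto $L_J$. Note that this direction does not even use that the summands are strongly surjective.

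For the backward implication, assume that for every non-empty $J\subseteq I$ there is an epimorphism $L\to L_J$, and let $M\inj L$; I must produce an epimorphism $L\to M$, i.e.\ establish $M\strong L$. Fix an embedding $f\colon M\to L$ and decompose $M$ along the block structure of $L$ by setting $M_i=\{m\in M\mid f(m)\in L_i\}$ for $i\in I$. Since each $L_i$ is convex in $L$, each $M_i$ is convex in $M$, and if $i<i'$ in $I$ then $M_i<M_{i'}$. Hence, writing $J=\{i\in I\mid M_i\neq\es\}$ for the support, we obtain $M=\sum_{j\in J}M_j$ with every $M_j$ non-empty, and $f$ restricts to an embedding witnessing $M_j\inj L_j$.

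Now I combine the pieces. For each $j\in J$, strong surjectivity of $L_j$ applied to $M_j\inj L_j$ gives an epimorphism $\tau_j\colon L_j\to M_j$. Gluing these block by block produces $\tau\colon L_J\to M$ with $\tau\res{L_j}=\tau_j$; it is surjective because each $\tau_j$ is, and order-preserving because it respects the common block order of $L_J$ and $M$, so $M\strong L_J$. Composing with the hypothesized epimorphism witnessing $L\strong L_J$ then gives $M\strong L$ by transitivity of $\strong$ (a composition of epimorphisms is an epimorphism), as desired.

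The one conceptual point is recognizing that passing to the support $J$ lets us replace $L$ by $L_J$, so that the hypothesis supplies precisely the bridge $L\strong L_J$ needed to reach the blocks actually met by $M$. The steps demanding care --- though both are routine --- are the convexity of each $M_i$ inside $M$, which is what turns the set partition into a genuine ordered sum $M=\sum_{j\in J}M_j$, and the verification that $\tau$ is order-preserving across block boundaries; each follows at once from convexity of the $L_i$ and the compatibility of the block orders.
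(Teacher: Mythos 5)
Your proof is correct and follows essentially the same route as the paper's: decompose the suborder along the blocks $L_i$, use strong surjectivity of each summand to surject block by block, glue, and compose with the hypothesized epimorphism $L\to L_J$. The only cosmetic difference is that you carry an explicit embedding $f\colon M\to L$ where the paper works directly with a suborder $K\subseteq L$, which changes nothing in substance.
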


\begin{proof}
If $L$ is strongly surjective, then it must admit an epimorphism onto its
suborder $L_J$ for any non-empty $J\subseteq I$.

Conversely, suppose there is an epimorphism $\psi_J:L\to L_J$ for any
non-empty $J\subseteq I$. Let $K$ be a suborder of $L$ and let $J$ be the set
of indices $j$ such that $K$ intersects
$L_j$ in a non-empty set $K_j$, so that $K=\sum_{j\in J}K_j$. Since each
$L_j$ is strongly surjective, let $\varphi_j:L_j\to K_j$ be an epimorphism.
These induce an epimorphism $\varphi :L_J\to K$. Then $\varphi \circ \psi_J:
L \to K$ is an epimorphism.
\end{proof}

This yields the following simple examples of strongly surjective orders.

\begin{example} \label{ordinalstarplusordinal}
Let $\gamma ,\delta $ be countable ordinals and $n,m>0$. Then
$(\omega^{\gamma }n)^\star,\omega^{\delta }m$ and $(\omega^{\gamma
}n)^\star+\omega^{\delta }m$ are strongly surjective.
\end{example}
\begin{proof}
The fact that $(\omega^{\gamma }n)^\star,\omega^{\delta }m$ are strongly
surjective is a consequence of Theorem \ref{ssordinal} and Proposition
\ref{basics}(1). So by Proposition \ref{ssrestricted} it is enough to show
$(\omega^{\gamma }n)^\star \strong (\omega^{\gamma }n)^\star+\omega^{\delta
}m$ and $\omega^{\delta }m \strong (\omega^{\gamma }n)^\star+\omega^{\delta
}m$, which can be done by a definition by pieces using the presence of an
extremum in the range.
\end{proof}

\begin{corollary} \label{multiple}
If $L$ and $M$ are strongly surjective, then $LM$ is strongly surjective. In
particular $L^n$ is strongly surjective for all $n\in\N$.
\end{corollary}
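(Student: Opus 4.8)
The plan is to apply Proposition \ref{ssrestricted} to the decomposition $LM=\sum_{m\in M}L$, in which every summand equals the strongly surjective order $L$. By that proposition it then suffices to produce, for each non-empty $J\subseteq M$, an epimorphism from $LM$ onto $L_J=\sum_{j\in J}L=LJ$; equivalently, to show $LJ\strong LM$ for every non-empty suborder $J$ of $M$. Since $M$ is strongly surjective and $J\inj M$, there is an epimorphism $f\in\Epi(J,M)$, and the idea is to lift $f$ to an epimorphism $LM\to LJ$.

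The naive lift $(l,m)\mapsto(l,f(m))$ fails to be order preserving exactly when $f$ identifies two points of $M$: if $f(m_1)=f(m_2)$ with $m_1<_M m_2$ and $l_1>_L l_2$, then the increasing pair $(l_1,m_1)<(l_2,m_2)$ is sent to a decreasing one. This is the crux of the argument, and it is handled fibrewise. For $j\in J$ put $N_j=f^{-1}(j)$; as $f$ is an epimorphism each $N_j$ is a non-empty convex subset of $M$, and the family $(N_j)_{j\in J}$ partitions $M$ into intervals ordered like $J$, so that $LM=\sum_{j\in J}LN_j$. Each $N_j$ is a suborder of $M$, which is short because it is strongly surjective (Proposition \ref{basics2}(3)), and therefore $N_j$ is admissible; $L$ is admissible as well, being strongly surjective (Proposition \ref{basics2}(1),(2)). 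Corollary \ref{cor:lessthanprod} then yields $L\strong LN_j$, that is, an epimorphism $\tau_j\in\Epi(L,LN_j)$ collapsing the block $LN_j$ onto the single copy of $L$ sitting at position $j$ in $LJ$.

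Finally I would glue these maps. The domains $LN_j$ are pairwise disjoint and cover $LM$, and the ranges are the successive copies of $L$ in $LJ=\sum_{j\in J}L$, arranged in the same order $J$ as the blocks in $LM=\sum_{j\in J}LN_j$; hence the piecewise-defined function agreeing with $\tau_j$ on $LN_j$ is a well-defined order preserving surjection $LM\to LJ$, exactly as in the assembly of block epimorphisms used in the proof of Proposition \ref{ssrestricted}. This gives $LJ\strong LM$ for every non-empty $J\subseteq M$, so $LM$ is strongly surjective by Proposition \ref{ssrestricted}. The statement about powers follows by induction on $n$: $L^1=L$ is strongly surjective, and if $L^n$ is, then $L^{n+1}=L\cdot L^n$ is a product of two strongly surjective orders, hence strongly surjective by the first part. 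The only delicate point is the admissibility of the fibres $N_j$, which is precisely what shortness of $M$ supplies.
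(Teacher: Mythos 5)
Your proof is correct and follows essentially the same route as the paper's: reduce via Proposition \ref{ssrestricted} to showing $LJ\strong LM$ for each suborder $J$ of $M$, pull back an epimorphism $M\to J$ to get the convex fibres $N_j$, observe these are admissible because $M$ is short, apply Corollary \ref{cor:lessthanprod} to collapse each block $LN_j$ onto $L$, and glue. The only (harmless) difference is that the paper leaves the gluing and the induction for $L^n$ implicit, while you spell them out.
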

\begin{proof}
By Proposition \ref{ssrestricted}, it is enough to show $LK \strong LM$ for
any suborder $K$ of $M$. Let $\varphi: M \to K$ be an epimorphism and for $k
\in K$ let $M_k= \varphi^{-1} (\{k\})$. As $M$ is strongly surjective, each
$M_k$ must be admissible. Since $L$ is also admissible by Proposition
\ref{basics2}, Corollary \ref{cor:lessthanprod} implies that there is an
epimorphism $\varphi_k:L M_k\to L$ for every $k \in K$. Gluing together these
epimorphisms yields $LK \strong LM$.
\end{proof}

Strongly surjective orders are not closed under infinite products (ordered
lexicographically), as we will show in Section \ref{unctbless}.

\begin{lemma}\label{manytofew}
If $L$ is a scattered linear order and $0<n<m$, then $Lm \not \inj
Ln$.
\end{lemma}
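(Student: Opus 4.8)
The plan is to reduce the statement to a one-sided claim about coverings of $Lm$ by pieces that embed into $L$, and then to prove that claim by induction on the Hausdorff rank $\Hrk(L)$.

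First I would turn an embedding into a covering. Suppose $f\colon Lm\inj Ln$ and write $Ln=L_1'+\dots+L_n'$ with each $L_k'\simeq L$. The map sending $x\in Lm$ to the index $k$ with $f(x)\in L_k'$ is non-decreasing, so its fibres give a decomposition $Lm=C_1+\dots+C_n$ into consecutive convex sets with $C_k\inj L_k'\simeq L$. Discarding empty pieces, it therefore suffices to prove: \emph{whenever $Lm$ is written as a sum $D_1+\dots+D_p$ of non-empty consecutive convex pieces with $D_i\inj L$ for all $i$, one has $p\ge m$.} Since $n<m$, this contradicts the decomposition above.

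For the covering claim I would also write $Lm=L_1+\dots+L_m$ with $L_j\simeq L$, and for each block $L_j$ let $\psi(j)$ be the index of the rightmost piece $D_i$ meeting $L_j$. As higher blocks are met by later pieces, $\psi$ is non-decreasing, so it is enough to show that $\psi$ is strictly increasing, which yields $p\ge m$. If $\psi(j)=\psi(j+1)=i$, then $D_i$ meets $L_j$ and covers the top of $L_{j+1}$, so by convexity $D_i\supseteq F+L_{j+1}$, where $F=D_i\cap L_j$ is a non-empty final segment of $L_j\simeq L$. Hence $F+L\inj L$, and the whole lemma reduces to the \emph{key fact} $(\ast)$: for every scattered $L$ and every non-empty final segment $F$ of $L$, $F+L\not\inj L$.

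I would prove $(\ast)$ by induction on $\alpha=\Hrk(L)$. The base case ($L$ finite) is cardinality. For the step, use a Hausdorff decomposition $L=\sum_{z\in Z}A_z$ with $Z\in\{\omega,\omega^\star,\zeta\}$ and $\Hrk(A_z)<\alpha$. An embedding $F+L\inj L$ cuts $L$ just below the image of the upper copy: this produces a proper final segment $Q$ with $L\inj Q$ and the complementary proper initial segment $C$ with $F\inj C$. If $Z=\omega^\star$ then $Q$ is a finite sum of the $A_z$, so $\Hrk(Q)<\alpha$, contradicting $L\inj Q$; if $Z=\omega$ then $C$ is a finite sum of the $A_z$, so $\Hrk(C)<\alpha$, while $F$ is cofinal and has rank $\alpha$, contradicting $F\inj C$.

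The case $Z=\zeta$ is the main obstacle. There $L=L^-+L^+$ and both $C$ and $Q$ may have rank $\alpha$, so neither rank estimate fires directly. I would treat it by combining the two one-sided analyses: $L\inj Q$ forces $L^+\inj Q$ and governs the cofinal $\omega$-part, whereas the coinitial $\omega^\star$-part $L^-$ of the upper copy is constrained by the well-foundedness of the block index of $L^+$ (every infinite descending chain in $L^+$ is eventually contained in a single block $A_z$ of rank $<\alpha$), which pins its image into a rank-$<\alpha$ region. Reconciling these two constraints across the ``middle'' of $\zeta$ — keeping track of where the coinitial and cofinal parts of the two copies land — is the technical crux, and is the step I expect to require the most care.
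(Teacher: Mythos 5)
Your reformulation of the lemma as a covering statement is fine (in fact, gluing the embeddings $D_i\inj L$ piece by piece gives $Lm\inj Lp$, so the covering statement is \emph{equivalent} to the lemma), and the analysis of the map $\psi$ up to the point where $\psi(j)=\psi(j+1)$ yields $F+L\inj L$ is correct. The fatal problem is that your key fact $(\ast)$ is false. Take $L=\omega+1$ and let $F$ be the final segment consisting of the top point alone: then $F+L=1+\omega+1\simeq\omega+1=L$, so $F+L\inj L$. Restricting to infinite $F$ does not help: for $L=\omega^2+\omega$ and $F$ its final copy of $\omega$ one has $F+L=\omega+\omega^2+\omega\simeq L$. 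Moreover this failure hits your argument exactly where you use it: for $L=\omega+1$, covering $L2$ by $D_1=\omega$ (the $\omega$-part of the first copy) and $D_2=1+\omega+1$ (the top of the first copy together with the whole second copy) gives two pieces, each embeddable in $L$, with $\psi(1)=\psi(2)=2$. So $\psi$ simply need not be strictly increasing, and strict monotonicity of $\psi$ cannot be the mechanism behind $p\geq m$ (the covering claim itself is true, but, being equivalent to the lemma, it carries the full difficulty). The only instance of $(\ast)$ that holds for every scattered order is the extreme case $F=L$, i.e.\ Montalb\'an's lemma $M2\not\inj M$, and that is precisely what the paper uses: it first proves this special case (if $M2\inj M$ then $Mn\inj M$ for all $n$, and recursively picking a point in the middle copy of three builds a dense suborder of $M$, contradicting scatteredness), and then reduces the general case to it arithmetically: from $Lm\inj Ln$ one gets $L(m+k(m-n))\inj Ln$ for every $k$, hence $(Ln)2\inj Ln$ for $k$ large, so $Ln$, and therefore $L$, is not scattered. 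That bootstrap never produces proper final segments, which is exactly where $(\ast)$ breaks down.

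Independently of the falsity of $(\ast)$, the induction you sketch for it has gaps of its own. First, the decomposition $L=\sum_{z\in Z}A_z$ with $Z\in\{\omega,\omega^\star,\zeta\}$ and all summands of smaller Hausdorff rank exists only for certain countable scattered orders, while the lemma concerns arbitrary scattered $L$: the order $\omega_1$ admits no such decomposition, and even the countable order $\omega+\omega^\star$ does not (its only top-level decomposition into lower-rank pieces is a finite sum, a case your induction step omits). Second, the rank bookkeeping fails: with the paper's definition of $\Hrk$, a finite sum of orders of rank $<\alpha$ can have rank exactly $\alpha$ (already $\omega+\omega^\star$ is a sum of two rank-one orders of rank two), so from ``$Q$ is contained in a finite sum of the $A_z$'' you cannot conclude $\Hrk(Q)<\alpha$, and the intended contradiction with $L\inj Q$ evaporates; ruling such embeddings out genuinely requires a finer argument (e.g.\ showing that an embedding into finitely many blocks must eventually send a tail into a single block). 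Third, the $Z=\zeta$ case, which you yourself call the crux, is only gestured at. Any one of these would need repair, but given the counterexamples to $(\ast)$ the approach is not repairable in the form proposed.
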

\begin{proof}
We first show the special case $L2 \not \inj L$, which is actually
Lemma 1.17 of \cite{Mon06}. Notice that if $L2 \inj L$ then an easy induction
shows that $Ln \inj L$ for any $n$. We show that under this hypothesis $L$ is
not scattered. To this end we recursively define for every $s \in
2^{< \N }$ a subset $L_s$ of $L$ which is isomorphic to $L$ and a point
$x_s \in L$. Start with $L_{\es} =L$. Assuming that $L_s \simeq L$ and $L_s
\subseteq L$, since $L3 \inj L_s$, pick a point $x_s$ in the middle copy of
$L$ embedded in $L_s$ and let $L_{s0}$ and $L_{s1}$ be the left and right
copies of $L$ embedded in $L_s$. Then $\{x_s\}_{s\in 2^{< \N }}$ is a
dense suborder of $L$, and so $L$ is not scattered.

Now assume $0<n<m$ and $Lm \inj Ln$. Again inductively
one can show that $L(m+k(m-n)) \inj Ln$ for all $k$. If $k$ is large enough
we have $m+k(m-n) \geq 2n$ and hence $Ln2 \inj Ln$, which by the above
implies that $Ln$ is not scattered. Thus $L$ is not scattered.
\end{proof}

\begin{proposition} \label{quotient}
If $L$ is scattered and $LK$ is strongly surjective, then $K$ is strongly
surjective.
\end{proposition}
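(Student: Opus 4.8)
The plan is to fix an arbitrary suborder $K' \inj K$ and produce an epimorphism $K \to K'$; this is what strong surjectivity of $K$ requires. I would use throughout the correspondence between epimorphisms and partitions: an order preserving surjection $K \to K'$ is the same thing as a partition of $K$ into nonempty convex pieces whose induced order is isomorphic to $K'$. So the whole task reduces to producing such a partition of $K$.

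To bring in the hypothesis, note that $K' \inj K$ gives $LK' \inj LK$, and since $LK$ is strongly surjective there is $\psi \in \Epi(LK',LK)$, i.e.\ an order preserving surjection $\psi \colon LK \to LK'$. Composing with the projection $\pi \colon LK' \to K'$ yields an epimorphism $f = \pi \circ \psi \colon LK \to K'$, equivalently a partition of $LK$ into nonempty convex blocks $B_c = f^{-1}(c)$, $c \in K'$, ordered like $K'$. Writing $LK = \sum_{k \in K} F_k$ with $F_k \cong L$ the fibre over $k$, the remaining problem is to \emph{push this partition down} along the projection $LK \to K$, turning the blocks $(B_c)_{c\in K'}$ into a convex partition of $K$ indexed by $K'$.

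The key structural step, and where I expect Lemma \ref{manytofew} to be used, is the interaction between blocks and fibres. Each fibre is carried by $f$ onto a convex set $J_k = f(F_k) \subseteq K'$, and each block $B_c$ is mapped by $\psi$ onto $L \times \{c\} \cong L$; hence a block contained in a single fibre contains a copy of $L$. Since $L$ is scattered, two disjoint copies of $L$ inside $F_k \cong L$ would yield $L2 \inj L$, contradicting Lemma \ref{manytofew}. Thus \emph{at most one block is contained in each fibre}. From this I would deduce that each $J_k$ has a least and a greatest element together with at most one further interior value, that consecutive fibres share at most an endpoint of their images, and that any block meeting two or more fibres must contain the least point of some fibre. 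The only blocks not visible from the fibre structure in this way are the at most one \emph{compressed} block per fibre, namely those sitting strictly inside a single $F_k$.

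With this control I would define the pushed-down partition by sending each fibre $k$ to the least block it meets, i.e.\ to $\min J_k$; this already assigns a nonempty piece to every block that meets at least two fibres, as each such block is the least block of some fibre. The main obstacle is surjectivity at the compressed blocks, which this naive rule misses. Here the bound of at most one compressed block per fibre is exactly what saves the construction: when a fibre $k$ carries a compressed block $c$, its least and greatest blocks are shared with the neighbouring fibres and are themselves not compressed, so one can reassign those endpoints to the neighbours and free fibre $k$ to cover $c$, keeping the assignment monotone. The delicate point, which I expect to be the crux of the proof, is to show that these fibre‑by‑fibre reassignments can be carried out consistently, so that the result is a genuine surjective convex partition of $K$ ordered like $K'$; it is precisely the sparsity guaranteed by Lemma \ref{manytofew} that makes this possible. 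Once it is established we obtain $K' \strong K$, and as $K'$ was arbitrary, $K$ is strongly surjective.
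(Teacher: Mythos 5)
Your setup is essentially the paper's own, just in different notation: your fibre images $J_k$ are exactly the sections $R_k$ of the relation the paper defines ($kRj$ iff the image of the fibre over $k$ meets the copy of $L$ over $j$), your ``at most one compressed block per fibre'' is the paper's bound $|R_k|\leq 3$ obtained from Lemma \ref{manytofew}, and both arguments reduce the problem to choosing, for each fibre, one of the blocks it meets, in such a way that every block is chosen (monotonicity being automatic). The genuine gap is that the step you explicitly defer --- ``the delicate point, which I expect to be the crux of the proof'' --- really is the crux, and it constitutes most of the paper's proof. Moreover your proposed local fix does not work as stated: if you reassign fibre $k$ to its compressed block and try to hand $\min J_k$ to a neighbouring fibre $k'$, that fibre need not have $\min J_k$ as \emph{its} least block, and may carry a compressed block of its own, so the obligation propagates; worse, in a densely ordered region of $K$ there is no immediate neighbour to hand anything to, so no purely local, fibre-by-fibre repair can succeed.

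The paper resolves this globally. It first settles the unambiguous fibres: those meeting a single block, and those whose image fully contains some copy $L\times\{j\}$ (the witness $j$ is unique by Lemma \ref{manytofew}, and such fibres are sent to it). The problematic set $H$ --- fibres meeting exactly two blocks and fully covering neither --- is partitioned into maximal convex pieces embeddable in $\Z$; each piece lies inside a finite condensation class ($\mathbf{c}_F$-class) $C$ of $K$, hence has order type finite, $\omega$, $\omega^\star$ or $\omega^\star+\omega$. Infinite pieces are handled by an order-preserving surjective shift. For a finite piece $k_0<\dots<k_{n-1}$ meeting blocks $j_0<\dots<j_n$, one must shift up or down and then argue that the leftover extreme block is still covered: if the piece touches an end of $C$, the paper shows the leftover block is met by an infinite convex set of fibres, all but at most one of which fall into the singleton case and hence cover it; otherwise it takes the immediate predecessor $k'$ and successor $k''$ in $C$ and applies Lemma \ref{manytofew} a second time ($L(n+3)\not\inj L(n+2)$) to show that $\psi(k')=j_0$ or $\psi(k'')=j_n$, which dictates the direction of the shift. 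None of this machinery appears in your proposal; your reduction and your multiplicity bound are correct and coincide with the paper's, but the proof is incomplete precisely where the real work lies.
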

\begin{proof}
Let $J \inj K$ and fix an epimorphism $\varphi :LK\to LJ$. Define the
relation $R\subseteq K\times J$ by letting $kRj\Leftrightarrow\varphi
(L\times\{ k\} )\cap (L\times\{ j\} )\neq\es $. If $k \in K$ we denote by
$R_k$ the vertical section $\{j \in J \mid k R j\}$. Similarly, for $j \in
J$, $R^j$ is the horizontal section $\{k \in K \mid k R j\}$. Notice that:
\begin{itemize}
\item[{\bf -}] all sections are non-empty (i.e. the
    domain of $R$ is $K$ and its range is $J$);
\item[{\bf -}] all sections are convex subsets of
    the respective linear order;
\item[{\bf -}] $|R_k| \leq 3$ for each $k\in K$ (by
    Lemma \ref{manytofew}).
\end{itemize}
To define an epimorphism $\psi :K\to J$, we only need to define a surjection
$\psi$ that satisfies $kR\psi(k)$ for all $k\in K$. Given $k\in K$ we
distinguish several cases.
\begin{itemize}
\item[(a)] If $|R_k| =1$ and $j$ is the unique element of $R_k$ set $\psi (k)=j$.
\item[(b)] If there is $j$ (necessarily unique, by Lemma \ref{manytofew})
    such that $L\times\{ j\}\subseteq\varphi (L\times\{ k\} )$, then set
    $\psi (k)=j$; note that if $|R_k| = 3$ then $k$
    satisfies this case.
\item[(c)] So it remains to define $\psi $ on the set $H$ of those $k$
    such that $|R_k| = 2$, but do not fall in case
    (b).
Consider $I$ a maximal $\leq_K$-convex subset of $H$ that is embeddable in
    $\Z$: $I$ is contained in a $\mathbf{c}_F$-condensation class $C$ of
    $K$; see Section 4.2 in \cite{Rosens1982}. So $I$ has order type finite,
    $\omega$, $\omega^{\star}$ or $\omega^{\star} + \omega$. We need to
    define $\psi $ on each such $I$.
\item[(c1)] Suppose first that $I$ has $n$ elements $k_0<\ldots <k_{n-1}$.
    Consequently, $\bigcup_{r=0}^{n-1}R_{k_r}$ consists of $n+1$
    consecutive points of $J$, say $j_0<\ldots <j_n$, so that $R_{k_r}=\{
    j_r,j_{r+1}\} $.
\item[(c1a)] If $k_0=\min C$, set $\psi (k_r)=j_{r+1}$ for $0\leq r\leq
    n-1$. Notice that in this case, since $j_0$ does not witness that case
    (b) applies to $k_0$, $R^{j_0}$ consists of $k_0$ and an infinite convex set with supremum
    $k_0$: then for all but at most one $k \in R^{j_0} \setminus \{k_0\}$
    we have $\psi(k) = j_0$ by case (a).
\item[(c1b)] If $k_0 \neq \min C$ but $k_{n-1}=\max C$, then let $\psi
    (k_r)=j_r$ for $0\leq r\leq n-1$. An argument
    similar to the one used in case (c1a) shows that in this case $\psi(k)
    = j_n$ for some $k$.
\item[(c1c)] If neither $k_0$ is the first element of $C$, nor $k_{n-1}$ is
    the last element of $C$, let $k'$ be the immediate precedessor of $k_0$
    and $k''$ be the immediate successor of $k_{n-1}$
    in $K$. Thus $k'Rj_0$, $k''Rj_n$
(because neither $k_0$ nor $k_{n-1}$ satisfy the condition of case (b)) and
    both $\psi (k')$ and $\psi (k'')$ have been defined according to cases
    (a) or (b). Notice that $\psi (k')$ and $\psi (k'')$ cannot have both
    been defined according to clause (b), with values different from $j_0,j_n$, respectively, as in this case one would have
    $L(n+3) \inj L(n+2)$, contradicting Lemma \ref{manytofew}. This implies
    that either $\psi (k')=j_0$ or $\psi (k'')=j_n$. If $\psi (k')=j_0$,
    let $\psi (k_r)=j_{r+1}$; otherwise, let $\psi (k_r)=j_r$.
\item[(c2)] If $I$ has order type $\omega $, $\omega^{\star }$ or
    $\omega^{\star} + \omega $, then $\bigcup_{k\in I}R_k$ has the same
    order type and we can define $\psi $ on $I$ as any order preserving
    surjection onto $\bigcup_{k\in I}R_k$.
\end{itemize}
By construction, $\psi :K\to J$ is order preserving and surjective.
\end{proof}


\section{Bounding the complexity of $\StS$ from below}\label{lower bounds}

The closure properties of Subsection \ref{operations} allow to build several
examples of strongly surjective linear orders. We present here other kinds of
examples allowing to obtain some hardness results.

First we make our formal setting precise. We call $\lin$ the
subset of $2^{ \N \times \N }$ consisting of all linear orders $\leq_K$ on a
subset $K=\dom(\leq_K)$ of $ \N $. By definition it is a Polish subspace of
$2^{ \N \times \N }$. To avoid heavy notations, when there is no possible
confusion we just write $K$ for the pair $(K,{\leq_K})$.

When we work with elements of $\lin$ we fix recursive copies of $\N$ and
$\Q$, denoted respectively by $\omega$ and $\eta$. Moreover we assume a fixed
way of implementing sums (finite or infinite) and products as recursive (and
hence continuous) operations which produce new elements of $\lin$.

\begin{remark}
In the literature most often people work with $\mathsf{LO}$, the space of all
total orders on the domain $ \N $. The downside of $\mathsf{LO}$ is the
absence of finite orders, which we need for the main result in Section
\ref{analysis}. That is why we deal with $\lin$. However, for the
classification results on strongly surjective orders the two settings are
equivalent. Indeed, denote by $\mathsf{Fin}$ the $\Sigma^0_2$ set of finite
orders in $\lin$, and notice that there are continuous functions $\mathsf{LO}\to \lin \setminus \mathsf{Fin}$ and $\lin \setminus
\mathsf{Fin} \to \mathsf{LO}$ that preserve order types. If $\Gamma$ is a pointclass that includes $
\boldsymbol \Sigma^0_2$ and is closed under finite unions and continuous
preimages, and the set of strongly surjective orders in $\mathsf{LO}$ belongs
to $\Gamma$, then $\StS \setminus \mathsf{Fin} \in \Gamma$, so $\StS
\in\Gamma$. Conversely, if $\StS \in\Gamma$, one has that the strongly
surjective orders as a subset of $\mathsf{LO}$ are in $\Gamma$ as well.
\end{remark}

\subsection{Basic hardness}\label{basic}
Let $\scat$ and $\WO$ be the subsets of $\lin$
consisting of the scattered countable linear orders and of the countable
well-orders.
It is well-known that both $\scat$ and
$\WO$ are $\Pi^1_1$ and $\boldsymbol \Pi^1_1$-complete.

\begin{proposition}\label{basichardness1}
The sets $\StS$, $\StS\cap\scat$ and $\StS\cap\WO$ are $ \boldsymbol
\Pi^1_1$-hard.
\end{proposition}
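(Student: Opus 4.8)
The plan is to prove $\boldsymbol \Pi^1_1$-hardness by exhibiting a continuous reduction from the known $\boldsymbol\Pi^1_1$-complete set $\WO$ of countable well-orders into each of the three target sets. Since $\WO$ is $\boldsymbol\Pi^1_1$-complete, it suffices to produce, from an arbitrary $L\in\lin$, a new linear order $f(L)$, computed continuously, such that $f(L)$ is strongly surjective if and only if $L$ is a well-order; and we want $f(L)$ to moreover land in $\scat$ and in $\WO$ to handle all three sets simultaneously. The natural candidate is to exploit Theorem \ref{ssordinal}, which tells us exactly which ordinals are strongly surjective, together with Proposition \ref{basics2}(1), which says that a strongly surjective order with a minimum must be a well-order.

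First I would define the reduction. Given $L\in\lin$, form $f(L)=\omega\cdot L$ using the fixed recursive (hence continuous) implementation of products available in $\lin$; here $\omega$ is the fixed recursive copy of $\N$. If $L$ is a well-order, say $L\simeq\alpha$ for a countable ordinal $\alpha$, then $\omega\cdot L\simeq\omega\cdot\alpha=\omega^{1+\beta}$ for an appropriate $\beta$ (an indecomposable ordinal of the form allowed by Theorem \ref{ssordinal} with $m=1$), so $f(L)$ is a strongly surjective ordinal, and in particular lies in both $\scat$ and $\WO$. Conversely, if $L$ is not a well-order, then $\omega^\star\inj L$, whence $\omega\cdot\omega^\star\inj\omega\cdot L=f(L)$; in particular $f(L)$ contains a copy of $\omega^\star$ and so is ill-founded. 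An ill-founded order has no minimum in the relevant initial segment, and more directly, since $f(L)$ has a minimum (the least element of the first $\omega$-block over the least element of $L$, if $L$ has a least element) one appeals to Proposition \ref{basics2}(1): a strongly surjective order with a minimum is a well-order, so an ill-founded $f(L)$ with a minimum cannot be strongly surjective.

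The main obstacle is the edge case where $L$ has no minimum: then $f(L)=\omega\cdot L$ may fail to have a minimum, and Proposition \ref{basics2}(1) does not directly apply. To sidestep this I would instead set $f(L)=1+\omega\cdot L$, prepending a single point so that $f(L)$ always has a minimum regardless of $L$; prepending a point is still a continuous operation and does not affect the well-foundedness equivalence, since $1+\omega\cdot L$ is a well-order precisely when $L$ is. With a guaranteed minimum, the ``not well-order $\Rightarrow$ not strongly surjective'' direction follows cleanly from Proposition \ref{basics2}(1), while the ``well-order $\Rightarrow$ strongly surjective'' direction follows because $1+\omega\cdot\alpha\simeq\omega\cdot\alpha$ for infinite $\alpha$ (and the finite cases are trivially finite ordinals, hence strongly surjective by Theorem \ref{ssordinal}) remains an ordinal of the required form. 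This single reduction witnesses $\boldsymbol\Pi^1_1$-hardness of all three sets at once, since its range consists of well-orders whenever the input is in $\WO$, giving the conclusion for $\StS$, $\StS\cap\scat$, and $\StS\cap\WO$ simultaneously.
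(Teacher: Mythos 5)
Your overall strategy---reduce from $\WO$, use Theorem \ref{ssordinal} for the positive direction and Proposition \ref{basics2}(1) for the negative one, and prepend a point so that the output always has a minimum---is exactly the strategy of the paper, whose proof uses the map $g(L)=(1+L)\omega$. However, your reduction multiplies by $\omega$ on the wrong side, and this breaks the positive direction. In the paper's notation (following Rosenstein, $KM$ denotes $M$ copies of $K$), your $f(L)=\omega\cdot L$ replaces each point of $L$ by a copy of $\omega$; this is indeed what you describe when you identify the minimum of $f(L)$ as ``the least element of the first $\omega$-block over the least element of $L$''. For a well-order $L\simeq\alpha$ this produces the ordinal $\omega\cdot\alpha$ of standard ordinal arithmetic, and your claimed identity $\omega\cdot\alpha=\omega^{1+\beta}$ is false in general: by left distributivity, $\omega\cdot(\omega+1)=\omega\cdot\omega+\omega=\omega^{2}+\omega$, which is not of the form $\omega^{\gamma}m$ and hence, by Theorem \ref{ssordinal}, not strongly surjective. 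More generally, if the Cantor normal form of $\alpha$ is $\omega^{\gamma_1}n_1+\dots+\omega^{\gamma_k}n_k$ then $\omega\cdot\alpha=\omega^{1+\gamma_1}n_1+\dots+\omega^{1+\gamma_k}n_k$, which is a finite multiple of an indecomposable ordinal only when $k=1$. So your map (with or without the prepended $1$, since $1+\omega\cdot\alpha=\omega\cdot\alpha$) sends the well-order $\omega+1$ outside $\StS$, and it is not a reduction of $\WO$ to $\StS$.

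The fix is to put $\omega$ on the other side: multiplying on the right by $\omega$ absorbs the whole Cantor normal form, since $\alpha\cdot\omega=\omega^{\gamma_1+1}$ where $\omega^{\gamma_1}$ is the leading term of $\alpha$. That is, take $f(L)=(1+L)\omega$, i.e.\ $\omega$ copies of $1+L$ (or equivalently $1+L\omega$): for $L\in\WO$ this is an indecomposable countable ordinal, hence lies in $\StS\cap\scat\cap\WO$ by Theorem \ref{ssordinal}; for $L\notin\WO$ it is ill-founded yet has a minimum, hence is not strongly surjective by Proposition \ref{basics2}(1). With this one-sided correction your argument becomes precisely the paper's proof of Proposition \ref{basichardness1}.
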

\begin{proof}
Let $g:\lin\to\lin $ be defined by $g(L) = (1+L)
\omega$. Using Proposition \ref{basics2} and Theorem \ref{ssordinal}, as
$g(L)$ has a minimal element for any $L$, we have $g(L)\in\StS$ if and only
if $g(L)\in\WO$. Since $g(L)\in\WO$ if and only if $L\in\WO$, we have that
$g$ reduces $\WO$ to $\StS$, to $\StS\cap\scat$, and to $\StS\cap\WO$ as
well.
\end{proof}

We now consider the set of countable strongly surjective
linear orders that are non-scattered.

\begin{proposition}\label{basichardness2}
The set $\StS$ is $\mathbf{\Sigma}^1_1$-hard, and the set of non-scattered
strongly surjective orders is $\Sigma^1_1$ and $\mathbf{\Sigma}^1_1$-complete.
\end{proposition}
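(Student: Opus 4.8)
The plan is to prove two separate claims. For the second assertion (that the set of non-scattered strongly surjective orders is $\Sigma^1_1$ and $\mathbf{\Sigma}^1_1$-complete), I would first establish the upper bound and then the hardness, and then derive the first assertion ($\mathbf{\Sigma}^1_1$-hardness of $\StS$) as an easy consequence.

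First I would verify that the set of non-scattered strongly surjective orders is $\Sigma^1_1$. By Proposition \ref{nonscat}, a non-scattered countable $L$ is strongly surjective if and only if $\Q \strong L$, i.e.\ there exists an epimorphism from $L$ onto $\eta$. So the set equals the intersection of the non-scattered orders (which is $\mathbf{\Sigma}^1_1$, being the complement of the $\mathbf{\Pi}^1_1$ set $\scat$) with the set $\{L : \eta \strong L\}$. The latter is $\Sigma^1_1$ since it just asserts the existence of a surjection $L \to \eta$ that is order preserving, an arithmetical condition on the witnessing function, which is a real parameter. The intersection of two $\Sigma^1_1$ sets is $\Sigma^1_1$, so we are done with the upper bound.

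Next, for the hardness, the natural strategy is to find a continuous reduction from a known $\mathbf{\Sigma}^1_1$-complete set, the obvious candidate being $\lin \setminus \scat$, the non-scattered orders. The plan is to define a continuous map $h$ sending each $L$ to an order that is non-scattered and strongly surjective exactly when $L$ is non-scattered. A clean choice is to form $h(L) = \eta + L + \eta$ or similar, but the cleanest is probably to use the characterization (iii) of Proposition \ref{nonscat}: I want an order that has no scattered initial or final segment precisely when $L$ is non-scattered. Setting $h(L)$ to interleave copies of $\eta$ densely so that the result is always non-scattered, while ensuring the image is strongly surjective iff $L$ embeds $\Q$, is the key design task; a first attempt is $h(L) = \sum_{q \in \eta} (L_q)$ where the construction forces density of the non-scattered part. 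The map $h$ must be manifestly continuous in the fixed recursive coding of sums, which the recursive sum implementation guarantees.

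The main obstacle will be arranging the reduction so that strong surjectivity of $h(L)$ is controlled \emph{exactly} by non-scatteredness of $L$, with no extra accidental cases: I must ensure $h(L)$ is always non-scattered (so that criterion (ii) of Proposition \ref{nonscat} applies), and then that $\Q \strong h(L)$ holds iff $L$ is itself non-scattered. Verifying the backward direction, that scattered $L$ yields a non-strongly-surjective $h(L)$, will require showing $h(L)$ has a scattered initial or final segment, or else that no epimorphism onto $\eta$ exists; controlling the segments of an infinite sum is the delicate computation here. Finally, the first statement follows immediately: the reduction $h$ witnesses $\mathbf{\Sigma}^1_1$-hardness of the non-scattered strongly surjective orders, and since these form a subset of $\StS$ and $h(L)$ is never scattered, the same $h$ reduces $\lin \setminus \scat$ into $\StS$, giving $\mathbf{\Sigma}^1_1$-hardness of $\StS$ as claimed.
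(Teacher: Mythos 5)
Your upper bound argument is correct and coincides with the paper's: by Proposition~\ref{nonscat}(ii), the non-scattered strongly surjective orders form the intersection of $\lin\setminus\scat$ (a $\mathbf{\Sigma}^1_1$ set) with $\{L\in\lin \mid \Q \strong L\}$, which is $\Sigma^1_1$ since it asserts the existence of an order preserving surjection from $L$ onto $\eta$; and $\Sigma^1_1$ is closed under finite intersections.

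The gap is in the hardness part: you correctly isolate the design task (a continuous $h$ with $h(L)$ always non-scattered, such that $h(L)$ has no scattered initial or final segment exactly when $L$ is non-scattered), but you never exhibit a map achieving it, and the candidates you float provably fail. Both $h(L)=\eta+L+\eta$ and any construction that \emph{interleaves copies of $\eta$ densely} (for instance an $\eta$-indexed sum) protect \emph{both} ends of the resulting order: every nonempty initial segment and every nonempty final segment of such an order contains a nonempty initial (resp.\ final) segment of a copy of $\eta$, or whole summands indexed by a nonempty initial (resp.\ final) segment of $\eta$, hence is non-scattered; so by Proposition~\ref{nonscat} the image is strongly surjective for \emph{every} $L$, and the reduction collapses. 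What is needed is an asymmetric construction that protects only one end and stores the scatteredness of $L$ at the other. The paper takes $f(L)=\eta+L\omega$: the leading $\eta$ rules out scattered initial segments; at the other end, $L\omega$ is scattered if and only if $L$ is, and when $L$ is non-scattered every nonempty final segment of $f(L)$ either meets $\eta$ or contains a whole copy of $L$ (it contains all copies of $L$ from some index of $\omega$ on), hence is non-scattered. Note that the factor $\omega$ is essential: $\eta+L$ would not work, since a non-scattered $L$ (e.g.\ $\eta+\omega$) can itself have a scattered final segment. Once such an $f$ is in hand, your concluding step is exactly right: since $f(L)$ is never scattered, the same map simultaneously witnesses $\mathbf{\Sigma}^1_1$-hardness of $\StS$ and of $\StS\setminus\scat$.
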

\begin{proof}
Let $f: \lin \rao \lin$ be defined by $f(L) = \eta + L\omega$. As $f(L)$ is
non-scattered for all $L$, we have $f(L) \in \StS$ if and only if it has no
scattered initial nor final segments by Proposition \ref{nonscat}. But $f(L)$
never has a scattered initial segment, and it has a scattered final segment
if and only if $L$ itself is scattered. So finally $f$ reduces
$\lin\setminus\scat$ to $\StS$, and even to the set of non-scattered strongly
surjective countable linear orders, which are consequently
$\mathbf{\Sigma}^1_1$-hard.

The fact that $\StS \setminus \scat$ is $\Sigma^1_1$ follows from the
characterization of Proposition \ref{nonscat}.(ii).
\end{proof}

\subsection{Powers of $\Z$}
The main new ingredient needed for the lower bound is a general version of
the exponentiation with base $\Z$. There are two definitions of
$\Z^\alpha$ for $\alpha$ an ordinal number. The first one is by ordinal
induction (see \cite{Rosens1982}, Definition 5.34), while the second
(\cite{Rosens1982}, Definition 5.35) is a direct set theoretic definition,
and it can actually be used as a definition of $\Z^K$ for $K$ any linear
order. As pointed out in \cite{Rosens1982}, Exercise 5.36(1), the two
definitions coincide when $K$ is a well-order.

We first recall the definition by ordinal induction.

\begin{definition}~
\begin{arabiclist}
\item $\Z^0=1$,
\item $\Z^{\alpha+1} = \Z^\alpha
    \omega^\star + \Z^\alpha + \Z^\alpha \omega$,
\item $\Z^\alpha = \big(\sum_{\beta<\alpha} \Z^\beta\omega\big)^\star
    + 1 + \sum_{\beta<\alpha} \Z^\beta\omega$ if $\alpha$ is a limit
    ordinal.
\end{arabiclist}
\end{definition}

The following
equalities will be useful.

\begin{proposition}\label{eq:powersofZ}
For any $\alpha$ and $\beta<\alpha$, we have
\[
\Z^{\alpha} =
\big(\sum_{\gamma <\alpha }\Z^{\gamma }\omega \big)^{\star }+1+\sum_{\gamma <\alpha }\Z^{\gamma }\omega
=\big(\sum_{\beta\leq\gamma <\alpha }\Z^{\gamma }\omega\big)^\star+\sum_{\beta\leq\gamma <\alpha }\Z^{\gamma }\omega.
\]
\end{proposition}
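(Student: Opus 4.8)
The plan is to prove the two displayed equalities in turn: the first by transfinite induction on $\alpha$, and the second as a direct consequence of the first. Throughout I would abbreviate $S_\alpha=\sum_{\gamma<\alpha}\Z^\gamma\omega$, so that the first equality reads $\Z^\alpha=S_\alpha^\star+1+S_\alpha$, and I would freely use that reversal swaps the order of summands, $(A+B)^\star=B^\star+A^\star$, together with the left distributivity $L(K_1+K_2)=LK_1+LK_2$.

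First I would establish $\Z^\alpha=S_\alpha^\star+1+S_\alpha$ for every $\alpha$. When $\alpha$ is a limit this is exactly clause (3) of the definition, and when $\alpha=0$ the sum $S_0$ is empty, so the right-hand side reduces to $1=\Z^0$. The only case requiring work is the successor step $\alpha=\delta+1$, where $S_{\delta+1}=S_\delta+\Z^\delta\omega$ and hence $S_{\delta+1}^\star=\Z^\delta\omega^\star+S_\delta^\star$. Substituting and regrouping gives
\[
S_{\delta+1}^\star+1+S_{\delta+1}=\Z^\delta\omega^\star+(S_\delta^\star+1+S_\delta)+\Z^\delta\omega,
\]
and the inductive hypothesis rewrites the bracketed term as $\Z^\delta$, leaving precisely $\Z^\delta\omega^\star+\Z^\delta+\Z^\delta\omega=\Z^{\delta+1}$ by clause (2).

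For the second equality I would fix $\beta<\alpha$ and split the index set at $\beta$: writing $T=\sum_{\beta\leq\gamma<\alpha}\Z^\gamma\omega$ one has $S_\alpha=S_\beta+T$, whence $S_\alpha^\star+1+S_\alpha=T^\star+(S_\beta^\star+1+S_\beta)+T$. Applying the first equality at $\beta$ turns the bracket into $\Z^\beta$, so it remains to check that $\Z^\beta+T=T$. This is the single point deserving attention: the leading summand of $T$ is $\Z^\beta\omega$, and since $1+\omega=\omega$ we get $\Z^\beta+\Z^\beta\omega=\Z^\beta(1+\omega)=\Z^\beta\omega$, so prepending one more copy of $\Z^\beta$ is absorbed. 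Hence $\Z^\beta+T=T$, and combining with the first equality yields $\Z^\alpha=T^\star+T$, as claimed.

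I expect the transfinite induction itself to be routine; the only things demanding care are the bookkeeping of reversals of the (possibly infinite) sums $S_\alpha$ and the absorption identity $\Z^\beta+\Z^\beta\omega=\Z^\beta\omega$. The latter is exactly what makes the central $\emph{\textrm{``}}{+}1\emph{\textrm{''}}$ disappear when passing from the form symmetric about $\Z^\beta$ to the clean expression $T^\star+T$.
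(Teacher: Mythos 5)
Your proof is correct and takes essentially the same route as the paper: the identical transfinite induction (with the same successor-step computation) for the first equality, and the second equality obtained by splitting the sum at $\beta$ and applying the first equality there. The only cosmetic difference is the absorption step: the paper absorbs the central $\Z^{\beta}$ symmetrically, via $(\Z^{\beta}\omega)^{\star}+\Z^{\beta}+\Z^{\beta}\omega=(\Z^{\beta}\omega)^{\star}+\Z^{\beta}\omega=\Z^{\beta}\Z=\Z^{\beta+1}$, whereas you absorb it to the right using $\Z^{\beta}+\Z^{\beta}\omega=\Z^{\beta}(1+\omega)=\Z^{\beta}\omega$; the two identities are interchangeable.
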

\begin{proof}
To prove the first equality we argue by induction on $\alpha$. The cases
$\alpha=0$ and $\alpha$ limit are immediate from the definition. For the
successor case we have
\begin{align*}
\Z^{\alpha+1}& = \Z^{\alpha} \omega^{\star} + \Z^{\alpha} + \Z^{\alpha} \omega \\
&= \Z^{\alpha} \omega^{\star} + \big(\sum_{\gamma <\alpha }\Z^{\gamma}\omega
\big)^{\star }+1+\sum_{\gamma <\alpha }\Z^{\gamma }\omega +
\Z^{\alpha}\omega \\
&= \big(\sum_{\gamma <\alpha+1}\Z^{\gamma }\omega \big)^{\star }+1+\sum_{\gamma <\alpha+1}\Z^{\gamma }\omega,
\end{align*}
where in the central step we use the induction hypothesis.

The second equality can be proved applying the first one to $\beta$, using
$(\Z^{\beta} \omega)^{\star} + \Z^{\beta} \omega = \Z^{\beta}\Z =
\Z^{\beta +1}$.
\end{proof}

\begin{proposition}\label{prop:powersofZ}
For any countable ordinal $\alpha $ and natural number $m>0$, the order
$\Z^{\alpha }m$ is strongly surjective.
\end{proposition}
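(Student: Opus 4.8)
The plan is to first dispose of the factor $m$ and then prove the case $m=1$ by induction on $\alpha$. Since $m=\omega^0m$, Theorem \ref{ssordinal} tells us that the finite ordinal $m$ is strongly surjective; hence, once we know that $\Z^\alpha$ is strongly surjective, Corollary \ref{multiple} applied to $L=\Z^\alpha$ and $M=m$ yields that $\Z^\alpha m$ is strongly surjective. So it suffices to prove that $\Z^\alpha$ is strongly surjective for every countable $\alpha$. Throughout, every order in sight is countable, hence short, hence admissible together with all of its suborders; this is what lets us freely use the machinery of definition by pieces and family mash (Lemma \ref{definitionbypieces}, Proposition \ref{familymash}).

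I would argue by induction on $\alpha$. For $\alpha=0$ we have $\Z^0=1$, which is trivially strongly surjective. As an atomic base case I would check directly that $\Z=\Z^1$ is strongly surjective: up to isomorphism its only suborders are the finite orders, $\omega$, $\omega^\star$ and $\Z$ itself, and an order preserving surjection of $\Z$ onto each of these is immediate (collapse a suitable initial and/or final segment to a point). For the successor step I would use the identity $\Z^{\beta+1}=\Z^\beta\Z$ (already used in the proof of Proposition \ref{eq:powersofZ}): by the inductive hypothesis $\Z^\beta$ is strongly surjective, $\Z$ is strongly surjective by the base case, and so $\Z^{\beta+1}$ is strongly surjective by Corollary \ref{multiple}. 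This reduces the whole problem to the limit case.

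For $\alpha$ a limit ordinal I would exploit the decomposition furnished by Proposition \ref{eq:powersofZ}, writing
\[
\Z^\alpha=\Big(\sum_{\gamma<\alpha}\Z^\gamma\omega\Big)^\star+1+\sum_{\gamma<\alpha}\Z^\gamma\omega
\]
and viewing it as a sum $\sum_{i\in I}L_i$ of consecutive blocks, where each $L_i$ is either a central singleton, a block $\Z^\gamma\omega$ on the right, or a block $(\Z^\gamma\omega)^\star$ on the left, for some $\gamma<\alpha$. Each such block is strongly surjective: $\Z^\gamma$ is strongly surjective by the inductive hypothesis, $\omega$ is strongly surjective by Theorem \ref{ssordinal}, so $\Z^\gamma\omega$ is strongly surjective by Corollary \ref{multiple}, and its reverse is strongly surjective by Proposition \ref{basics}(1). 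Hence Proposition \ref{ssrestricted} applies, and it remains only to produce, for every non-empty $J\subseteq I$, an epimorphism from $\Z^\alpha$ onto the sub-sum $L_J=\sum_{j\in J}L_j$.

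I would build each such epimorphism by pieces (Lemma \ref{definitionbypieces}), assigning to every block $L_j$ with $j\in J$ a convex region of $\Z^\alpha$ that surjects onto it — using that each $\Z^\gamma$ occurs cofinally and coinitially often inside $\Z^\alpha$, and that by the inductive hypothesis these occurrences surject onto the required pieces — while the blocks indexed outside $J$ are absorbed into neighbouring regions. The main obstacle is precisely this last construction: since the blocks $\Z^\gamma\omega$ have neither a maximum nor a minimum, an omitted region cannot simply be collapsed onto an endpoint of an adjacent retained piece, so one must set up the covering so that the hypotheses of Lemma \ref{definitionbypieces} (connectedness, and the existence of the relevant extrema at non-adjacent seams) are met, and so that cofinality and coinitiality are respected as dictated by Proposition \ref{cofcoi}. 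Once the covering is arranged coherently the epimorphism $\Z^\alpha\to L_J$ follows, completing the induction.
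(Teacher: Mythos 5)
Your global strategy coincides with the paper's: remove the factor $m$ with Corollary \ref{multiple}, induct on $\alpha$, handle successors via $\Z^{\beta+1}\simeq\Z^{\beta}\Z$ and Corollary \ref{multiple}, and at limit stages write $\Z^{\alpha}$ as a sum of strongly surjective blocks using Proposition \ref{eq:powersofZ}, so that Proposition \ref{ssrestricted} reduces everything to producing, for every non-empty $J\subseteq I$, an epimorphism from $\Z^{\alpha}$ onto $L_J$. Up to that point your proposal is correct and matches the paper. But the last step of your limit case is not a proof: you correctly name the obstacle (the blocks $\Z^{\gamma}\omega$ have no extrema, so omitted blocks cannot be collapsed onto endpoints of retained ones) and then simply assert that ``once the covering is arranged coherently the epimorphism follows''. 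With block-shaped target pieces, Lemma \ref{definitionbypieces} forces consecutive source pieces to be adjacent, so all it does is reduce the problem to surjecting consecutive convex chunks of $\Z^{\alpha}$ onto the corresponding chunks of $L_J$; those chunk-level surjections --- a tail segment of $\sum_{\gamma<\alpha}\Z^{\gamma}\omega$ onto a bounded, possibly infinite, sub-sum --- are the whole difficulty, and nothing in your outline produces them.

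The paper fills exactly this hole with a concrete intermediate claim, proved by Proposition \ref{familymash} (family mash), which you cite in your preamble but never actually deploy. The claim is: if $H\subseteq\delta$ is non-empty, $1\leq\beta_0<\beta_1\leq\delta$ and $\sup H<\beta_0$, then $\sum_{\gamma\in H}\Z^{\gamma}\omega \strong \sum_{\beta_0\leq\gamma<\beta_1}\Z^{\gamma}\omega$. Its proof is a mash, which is precisely the tool designed to bypass the no-extrema problem: the bounded sub-sum embeds into $\Z^{\beta_0}$, hence into every block $\Z^{\rho}\omega$ of the tail; each such block surjects onto it by the inductive hypothesis together with Corollary \ref{multiple}; and Proposition \ref{familymash} glues these surjections by cutting the \emph{target} along a cofinal sequence, so no endpoints are required. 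Even granted the claim, the paper still needs two reductions --- one may assume both the reversed part $K^-$ and the forward part $K^+$ of $L_J$ are non-empty, and that their index sets are unbounded in $\delta$ --- and then concludes by interleaving a cofinal sequence $(\alpha_n)$ of $J^+$ with a faster cofinal sequence $(\beta_n)$ of $\delta$, gluing the epimorphisms $\sum_{\beta_n\leq\gamma<\beta_{n+1}}\Z^{\gamma}\omega\to\sum_{\gamma\in J^+,\,\alpha_n\leq\gamma<\alpha_{n+1}}\Z^{\gamma}\omega$ supplied by the claim. This is the actual mathematical content of the limit case; your proposal flags the difficulty honestly but does not overcome it, so it has a genuine gap exactly where you wrote ``the main obstacle''.
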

\begin{proof}
Since finite linear orders are trivially strongly surjective, by Corollary
\ref{multiple} it suffices to show that each $\Z^{\alpha}$ is strongly
surjective. Proceed by induction on $\alpha$. When $\alpha=0$ we get the
singleton linear order. Notice that $\Z$ is strongly surjective by Example
\ref{ordinalstarplusordinal}, so that Corollary \ref{multiple} handles the
successor step because $\Z^{\alpha+1} = \Z^\alpha \Z$.

Suppose now that $\delta $ is limit and that $ \Z^{\gamma }$ is strongly
surjective for all $\gamma<\delta $. By Corollary \ref{multiple}, so are
$\Z^{\gamma }\omega$ and $\Z^{\gamma} \omega^\star$. Recall that, by
Proposition \ref{eq:powersofZ}, $\Z^{\delta}$ can be written as a sum over
the index set $I = \delta^\star + \delta $: $ \Z^{\delta }=(\sum_{\gamma
<\delta } \Z^{\gamma }\omega )^{\star}+\sum_{\gamma <\delta } \Z^{\gamma
}\omega $.

First we show that if $1\leq\beta_0<\beta_1\leq\delta $ and $H$ is a
non-empty subset of $\delta $ with $\sup H<\beta_0$,
then

\begin{equation}\label{equ:1}
\sum_{\gamma\in H}\Z^\gamma\omega \strong \sum_{\beta_0\leq\gamma<\beta_1} \Z^\gamma\omega.
\end{equation}
If $\beta_1=\rho +1$ is a successor ordinal, $(\Z^{\beta_0} \omega,
\Z^{\rho } \omega )$ is a nice family in $\sum_{\beta_0 \leq
\gamma<\beta_1}\Z^\gamma \omega$ and we can mash
$\sum_{\beta_0\leq\gamma<\beta_1} \Z^\gamma\omega$ onto $\sum_{\gamma\in
H} \Z^\gamma\omega $, since $\sum_{\gamma\in H} \Z^\gamma\omega \inj
\Z^{\beta_0} \inj \Z^{\beta_0}\omega \inj \Z^{\rho}\omega $. If
$\beta_1$ is limit, let $\rho_n$ be an increasing cofinal sequence in
$\beta_1$, with $\rho_0=\beta_0$: we can mash the nice family
$(\Z^{\rho_n}\omega)_{n\in \N }$ onto $\sum_{\gamma\in
H}\Z^\gamma\omega $. So, in either case we get (\ref{equ:1}).

Take now a non-empty subset $J$ of $I$: it determines
two subsets $J^-,J^+\subseteq\delta $ -- one of them possibly empty -- and a
suborder $K=(\sum_{\gamma\in J^-}\Z^{\gamma }\omega
)^\star+\sum_{\gamma\in J^+}\Z^{\gamma }\omega $. We want to show that $K
\strong \Z^{\delta }$, so that we can conclude the proof by applying
Proposition \ref{ssrestricted}. Set $K^-=(\sum_{\gamma\in J^-}\Z^{\gamma
}\omega )^{\star }$ and $K^+=\sum_{\gamma\in J^+}\Z^{\gamma }\omega$.

First notice that we may suppose that both $K^-$ and $K^+$ are non-empty. In
fact if, for example, $K^-=\es $, letting $\alpha =\min J^+$, we have
\[
K=K^+=\Z^{\alpha }+K^+ \strong (\Z^{\alpha }\omega )^{\star }+K^+,
\]
so that an epimorphism from $\Z^\delta$ onto the rightmost part gives by
composition an epimorphism onto $K$.

Similarly, we may assume that both $J^-$ and $J^+$ are unbounded in $\delta
$. Indeed, if $\alpha <\delta $ is an upper bound for, say, $J^+$, we have
\[
K \strong K^-+\Z^{\alpha +1} \strong K^-+\Z^{\alpha +1}\omega \strong K^-+\sum_{\alpha+2\leq\gamma <\delta }\Z^{\gamma }\omega,
\]
where in the first inequality we used the induction
hypothesis, and in the last one (\ref{equ:1}).

Since $\Z^{\delta} = (\sum_{\beta \leq \gamma < \delta} \Z^{\gamma}
\omega)^{\star} + \sum_{\beta \leq \gamma < \delta} \Z^{\gamma} \omega$ for
any $\beta < \delta$, it is enough to show that both $K^- \strong
(\sum_{\beta \leq \gamma < \delta} \Z^{\gamma} \omega)^{\star}$ and $K^+
\strong \sum_{\beta \leq \gamma < \delta} \Z^{\gamma} \omega$ hold for some
$\beta $. To prove, for instance, the latter, let $\{\alpha_n\}_{n\in \N }$
be increasing and cofinal in $J^+$, with $\alpha_0=\min J^+$, and let
$\{\beta_n\}_{n\in \N }$ be increasing and cofinal in $\delta $, with
$\alpha_{n+1}<\beta_n$ for all $n\in \N$. Then, by (\ref{equ:1}), there exist
epimorphisms
\[
\sum_{\beta_n\leq\gamma <\beta_{n+1}}\Z^{\gamma }\omega \to
\sum_{\gamma\in J^+,\alpha_n\leq\gamma< \alpha_{n+1}}\Z^{\gamma }\omega
\]
Gluing them together, one obtains an epimorphism from
$\sum_{\beta_0\leq\gamma <\delta }\Z^{\gamma }\omega$ onto $K^+$.
\end{proof}

Here is the set-theoretic definition of exponentiation with base $\Z$.

\begin{definition}
Let $K$ be a linear order. For any map $\varphi:K \to \Z$, $\Supp(\varphi)$
stands for the \emph{support} of $\varphi$, that is $\Supp(\varphi) = \{k \in
K\mid \varphi(k) \neq 0\}$. The $K$-power of $\Z$, denoted by $\Z^K$,
is the following order on $\{\varphi: K \to \Z \mid \Supp(\varphi) \text{ is
finite}\}$. If $\varphi, \psi: K \to \Z$ are maps with finite support let
$\varphi \leq_{\Z^K} \psi$ if and only if $\varphi = \psi$ or
$\varphi(k_0) <_\Z \psi(k_0)$ where $k_0 = \max \{ k \in \Supp(\varphi) \cup
\Supp(\psi) \mid \varphi(k) \neq \psi(k)\}$.
\end{definition}

We now show that if $K$ is countable but not a well-order then $\Z^K \equiv_s
\Q$, and hence $\Z^K$ is strongly surjective by Proposition \ref{nonscat}.

\begin{lemma}For any linear orders $K$ and $L$ we have:
\begin{arabiclist}
\item $\Z^{K+L} \simeq \Z^K \Z^L$
\item if $K$ is countable and with no minimum then $\Z^K \simeq \Q$.
\end{arabiclist}
\end{lemma}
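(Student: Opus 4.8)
The plan is to establish (1) through an explicit isomorphism and (2) through Cantor's characterisation of $\Q$ as the unique countable dense linear order without endpoints.

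For (1), I would send a finite-support map $\chi\colon K+L\to\Z$ to the pair $(\chi\res K,\chi\res L)$. This is a bijection onto $\Z^K\times\Z^L$ since restricting preserves finiteness of support, and conversely two finite-support maps on $K$ and $L$ glue to one on $K+L$. To check that it is order preserving, given $\chi\neq\chi'$ let $k_0$ be the largest coordinate where they disagree, so that $\chi<_{\Z^{K+L}}\chi'$ is decided by $\chi(k_0)<_\Z\chi'(k_0)$. If $k_0\in L$, then the $K$-parts play no role: the $L$-parts already differ with top difference $k_0$, which matches the dominant $\Z^L$-coordinate of the product. If $k_0\in K$, then since every point of $L$ lies above $k_0$ the two maps agree throughout $L$, so $\chi\res L=\chi'\res L$ and the comparison is decided in the $\Z^K$-coordinate, again at $k_0$. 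Either way the product order and $\leq_{\Z^{K+L}}$ agree.

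For (2), it suffices to show that $\Z^K$ is a countable dense linear order without endpoints. Countability is immediate since $K$ is countable, and there are no endpoints because, given $\varphi$, altering its value at a single coordinate $k^*$ by $\pm 1$ produces a map above or below $\varphi$, the sole coordinate of disagreement being $k^*$. The real content is density, and this is the step where the hypothesis that $K$ has no minimum is essential. Given $\varphi<_{\Z^K}\psi$, let $k_0$ be the top coordinate of disagreement, so $\varphi(k_0)<_\Z\psi(k_0)$, and use the absence of a minimum to pick $k_1<_K k_0$. I would then define $\chi$ to agree with $\psi$ everywhere except at $k_1$, where $\chi(k_1)=\psi(k_1)-1$; its support differs from that of $\psi$ only at $k_1$, so it stays finite. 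Since $\chi$ and $\psi$ differ only at $k_1$ we get $\chi<_{\Z^K}\psi$; and since $\chi$ agrees with $\psi$ at and above $k_0$ while $\psi$ agrees with $\varphi$ strictly above $k_0$, the top coordinate where $\chi$ and $\varphi$ disagree is again $k_0$, where $\chi(k_0)=\psi(k_0)>_\Z\varphi(k_0)$, giving $\varphi<_{\Z^K}\chi$.

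The main obstacle is precisely this density argument. Because the order on $\Z^K$ is governed by the \emph{greatest} coordinate of disagreement, one cannot in general create room between $\varphi$ and $\psi$ at the coordinate $k_0$ itself: when $\psi(k_0)=\varphi(k_0)+1$ there is no integer strictly between them. The resolution is to instead introduce a difference at a strictly lower coordinate, which is available exactly because $K$ has no minimum. This also makes clear why the hypothesis cannot be dropped, as $\Z^{\{*\}}\simeq\Z$ is discrete.
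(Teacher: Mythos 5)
Your proof is correct and follows essentially the same route as the paper: the same restriction bijection $\chi\mapsto(\chi\res K,\chi\res L)$ for (1), and Cantor's characterization of $\Q$ for (2), with density obtained by exploiting the absence of a minimum in $K$ to introduce a modification at a coordinate lying below the ones that matter. The only cosmetic difference is that the paper perturbs $\varphi$ upward at a point chosen below all of $\Supp(\varphi)\cup\Supp(\psi)$ (which simultaneously yields the absence of extrema), whereas you perturb $\psi$ downward at a point below the top disagreement coordinate; both verifications go through.
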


\begin{proof}
(1) The bijection $\Z^{K+L}\to\Z^K\Z^L$,
$\varphi\mapsto(\varphi\rest{K},\varphi\rest{L})$ is an isomorphism.

(2) Take $K$ countable with no minimum, and suppose $\varphi<_{\Z^K}\psi$
holds for some $\varphi$ and $\psi$ in $\Z^K$. As
$K$ has no minimum pick $k_0 \in K$ that is strictly below every element of
$\Supp(\varphi)\cup\Supp(\psi)$. Define $\varphi^-$, $\theta$ and $\psi^+$,
all in $\Z^K$, as follows.
\begin{align*}
\varphi^-(x)=&\begin{cases}
-1&\mbox{if }x=k_0\\
\varphi(x)&\mbox{otherwise,}
\end{cases}\\
\theta(x)=&\begin{cases}
1&\mbox{if }x=k_0\\
\varphi(x)&\mbox{otherwise,}
\end{cases}\\
\psi^+(x)=&\begin{cases}
1&\mbox{if }x=k_0\\
\psi(x)&\mbox{otherwise.}
\end{cases}
\end{align*}
We have
\[\varphi^-<_{\Z^K}\varphi <_{\Z^K}\theta <_{\Z^K}\psi <_{\Z^K}\psi^+,\]
so $\Z^K$ is dense, countable, without extrema, giving $\Z^K \simeq \Q$.
\end{proof}

\begin{proposition}\label{prop:nonordinalpowerofZ}
If $K$ is countable and not a well order then there is a countable ordinal
$\alpha$ such that
\[
\Z^K\simeq\Z^\alpha\Q.
\]
Hence $\Z^K \equiv_s \Q$.
\end{proposition}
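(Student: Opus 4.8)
The plan is to write $K$ as a sum $A+B$ with $A$ a well-order and $B$ an order without minimum, and then feed this decomposition into the two parts of the preceding lemma. First I would isolate the largest well-ordered initial segment of $K$, namely
\[
A=\{x\in K\mid ({\lao},x]_K\text{ is well-ordered}\}.
\]
This $A$ is an initial segment, since being well-ordered is inherited by shorter initial segments, and it is itself well-ordered: given a non-empty $S\subseteq A$, pick $x\in S$ and take the minimum of the non-empty subset $({\lao},x]_K\cap S$ of the well-order $({\lao},x]_K$; this minimum is readily seen to be the minimum of $S$. Being a countable well-order, $A$ has some countable order type $\alpha$.

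Next I would set $B=K\setminus A$ and check that $B$ is non-empty and has no minimum. It is non-empty because $K$ is not a well-order, so $A\neq K$. It has no minimum because if $b=\min B$ existed, then $({\lao},b]_K=A\cup\{b\}$ would be a well-order with a top element added, forcing $b\in A$, a contradiction. Now $K=A+B$ gives, by part (1) of the lemma, $\Z^K\simeq\Z^A\Z^B$; part (2) gives $\Z^B\simeq\Q$ since $B$ is countable with no minimum; and $\Z^A\simeq\Z^\alpha$ since $A\simeq\alpha$ (for well-orders the set-theoretic and inductive definitions of the $\Z$-power agree). Hence $\Z^K\simeq\Z^\alpha\Q$.

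For the final assertion, observe that $\Z^\alpha\Q$ is a countable non-scattered order, as picking one point in each copy of $\Z^\alpha$ embeds $\Q$. It is moreover strongly surjective because $\Z^\alpha$ (Proposition \ref{prop:powersofZ}) and $\Q$ are, and products of strongly surjective orders are strongly surjective (Corollary \ref{multiple}). By Proposition \ref{nonscat}, the only countable non-scattered strongly surjective order up to $\equiv_s$ is $\Q$, so $\Z^K\simeq\Z^\alpha\Q\equiv_s\Q$. The one genuinely structural step, and the main obstacle, is the decomposition itself: verifying that $A$ really is well-ordered (so that a largest well-ordered initial segment exists) and that its complement has no minimum. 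Once this is in place, the rest is a direct application of the lemma together with the earlier closure results.
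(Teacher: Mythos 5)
Your proof is correct, and for the isomorphism $\Z^K\simeq\Z^\alpha\Q$ it is exactly the paper's argument: decompose $K=\alpha+K'$ with $K'$ non-empty and without minimum, then apply the two parts of the preceding lemma. (The paper treats the existence of this decomposition as standard, whereas you verify it; your verification is sound, and your remark that identifying $\Z^A$ with $\Z^\alpha$ uses the agreement of the set-theoretic and inductive definitions of $\Z$-powers on well-orders is a point the paper also records.) Where you genuinely diverge is in deducing $\Z^K\equiv_s\Q$. The paper argues directly: $\Z^\alpha\Q$ is by construction a $\Q$-sum, so collapsing each copy of $\Z^\alpha$ to a point gives $\Q\strong\Z^K$, and conversely $\Z^K\strong\Q$ holds simply because $\Z^K$ is countable (every countable linear order is an epimorphic image of $\Q$, by Proposition 16(1) of \cite{cacama}). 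You instead invoke Proposition \ref{prop:powersofZ} (strong surjectivity of $\Z^\alpha$) together with Corollary \ref{multiple} (closure under finite products) to conclude that $\Z^\alpha\Q$ is strongly surjective, and then use the uniqueness, up to $\equiv_s$, of the countable non-scattered strongly surjective order. Both routes rest only on results established before this proposition, so there is no circularity; the paper's is lighter, needing neither Proposition \ref{prop:powersofZ} nor Corollary \ref{multiple}, while yours passes through strong surjectivity explicitly --- note, though, that the paper obtains strong surjectivity of $\Z^K$ as a \emph{consequence} of $\Z^K\equiv_s\Q$ via Proposition \ref{nonscat}, so you are effectively running that implication in the opposite direction.
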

\begin{proof}
To obtain $\Z^K\simeq\Z^\alpha\Q$ it suffices to use the previous lemma with
the decomposition $K=\alpha+K'$ for $\alpha$ ordinal and $K'$ without a
minimum. Since $\Z^K$ is written as a $\Q$ sum we have $\Q \strong \Z^K$,
which yields $\Z^K \equiv_s \Q$ because $K$, and hence $\Z^K$, is countable.
\end{proof}

\subsection{The lower bound}
We now prove that $\StS$ is hard for the class
$\check{\mathrm{D}}_2(\mathbf{\Pi}^1_1)$ of all sets that are the union of an
analytic and a coanalytic set.

As we did for sums and products, we want to realize exponentiation with base
$\Z$ as an operation on $\lin$. Since in this case the details are less
straightforward, we provide them:

\begin{proposition}\label{Psiexp}
There is a continuous (even recursive) function $\lin\to\lin$ mapping
any $K$ to an order $(\zeta^K,\leq_{\zeta^K})$ isomorphic to $\Z^K$.
\end{proposition}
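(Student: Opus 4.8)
The plan is to realize the set-theoretic definition of $\Z^K$ directly on codes of finite-support functions. I would fix once and for all a recursive bijection between $\N$ and the set of finite partial functions from $\N$ to $\Z$, writing $\varphi_n$ for the partial function coded by $n$, and then let $\zeta^K$ consist of those $n$ such that $\dom(\varphi_n) \subseteq \dom(\leq_K)$ and $\varphi_n(k) \neq 0$ for every $k \in \dom(\varphi_n)$. The second clause normalizes the representation so that the domain of $\varphi_n$ is exactly its support; this guarantees that distinct codes in $\zeta^K$ name distinct finite-support functions $K \to \Z$, so that $n \mapsto (\varphi_n \text{ extended by } 0)$ is a bijection of $\zeta^K$ onto the underlying set of $\Z^K$. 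Crucially, deciding $n \in \zeta^K$ only inspects, for each $k$ in the finite set $\dom(\varphi_n)$, whether $(k,k)$ belongs to $\leq_K$ (equivalently $k \in K$); this is a clopen condition depending on finitely many coordinates of $K$, hence decided recursively in $K$.

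Next I would define the order. Given $m, n \in \zeta^K$ coding $\varphi$ and $\psi$, the disagreement set $D = \{k : \varphi(k) \neq \psi(k)\}$ is a finite subset of $K$ contained in $\dom(\varphi_m) \cup \dom(\varphi_n)$, since outside the two supports both functions vanish. I would set $m \leq_{\zeta^K} n$ if and only if $m = n$, or $D \neq \emptyset$ and $\varphi(k_0) <_\Z \psi(k_0)$, where $k_0$ is the $\leq_K$-maximum of $D$ (this $k_0$ is exactly the one appearing in the definition of $\Z^K$). Locating $k_0$ requires only comparing the finitely many elements of $D$ under $\leq_K$, and the final comparison takes place in the fixed recursive copy of $\Z$; so the whole decision consults only finitely many bits of $\leq_K$. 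Consequently the characteristic function of the graph of $\leq_{\zeta^K}$, viewed as a subset of $\N \times \N$, is computed uniformly and recursively from $\leq_K$, which is precisely the recursiveness (a fortiori continuity) asserted in the statement.

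It then remains to check that the output genuinely lies in $\lin$ and is isomorphic to $\Z^K$, and both follow from the bijection above: because the definition of $\leq_{\zeta^K}$ transports the order of $\Z^K$ through this bijection verbatim, the relation $\leq_{\zeta^K}$ inherits reflexivity, antisymmetry, transitivity and totality on $\zeta^K$, so $(\zeta^K,\leq_{\zeta^K}) \in \lin$ and the bijection is an order isomorphism onto $\Z^K$. The only real work is bookkeeping: arranging the coding so that the normalization clause yields an honest bijection (rather than a surjection with redundant codes), and confirming that both membership in $\zeta^K$ and each instance of the order relation are settled by finitely many values of $\leq_K$. I expect this continuity-via-finite-dependence verification, rather than any conceptual difficulty, to be the crux, since the order on $\Z^K$ reads off the top coordinate of disagreement and one must be sure that locating that coordinate never forces an unbounded search through $K$; the finiteness of the supports is exactly what prevents this.
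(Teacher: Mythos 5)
Your proposal is correct and follows essentially the same route as the paper: both realize the set-theoretic definition of $\Z^K$ on integer codes for finite-support functions, define the order by comparing values at the $\leq_K$-maximal coordinate of disagreement, and obtain recursiveness because each bit of the output order is determined uniformly by finitely many bits of $\leq_K$. The only difference is bookkeeping: the paper codes a function as a pair of finite sequences with the support listed in $\leq_K$-decreasing order, so the maximal disagreement point is the first index where two codes differ (at the cost of a case analysis when one sequence extends the other), whereas you keep unnormalized finite partial functions and locate the $\leq_K$-maximum of the disagreement set at comparison time, which avoids that case analysis.
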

\begin{proof}
Fix $n\mapsto s_n$ and $n\mapsto((n)_0,(n)_1)$ recursive enumerations of $
\N^{< \N }$ and $ \N^2$ respectively, as well as a recursive order
$\prec_{\zeta }$, whose domain is the whole $\N$, that is isomorphic to the
strict part of $\leq_\Z$. For any $K \in \lin$ we define $(\zeta^K,
\leq_{\zeta^K}) \in \lin$.

First, the domain $\zeta^K$ is the set of codes for pairs of sequences of the
same length, the first with values in $K$, the second in $\N \setminus
\{0\}$. This simulates the finite support. For convenience we require the
sequences with values in $K$ to be $\leq_K$-decreasing. Writing $\lh(s)$ for
the length of a sequence $s$:
\begin{multline*}
\zeta^K=\Big\{n\in \N \mid\lh(s_{(n)_0})=\lh(s_{(n)_1})\wedge
\forall i< \lh (s_{(n)_0})\,
s_{(n)_1}(i)\neq0 \\ \wedge
\forall i<j<\lh(s_{(n)_0})\ s_{(n)_0}(j)<_Ks_{(n)_0}(i)\Big\}.
\end{multline*}
We now  compare two codes of pairs of sequences on the first value on which
they differ. We have to be careful because if the value differs on the first
sequence, it means that the two sequences do not have the same support. Also,
one sequence could extend the other. Formally, given $n,m \in \zeta^K$ we
have $n \leq_{\zeta^K} m$ if and only if
\begin{itemize}
\item[-] $n=m$;

or
\item[-] for all $i<\min\{\lh(s_{(n)_0}),\lh(s_{(m)_0})\}$ we have
\[\forall\varepsilon\in\{ 0,1\}\big(s_{(n)_\varepsilon}(i)=s_{(m)_\varepsilon}(i)\big),\]
and
\begin{multline*}
\big(\lh(s_{(n)_0})<\lh(s_{(m)_0})\wedge 0\prec_{\zeta }s_{(m)_1}(\lh(s_{(n)_0}))\big)\vee \\
\big(\lh(s_{(m)_0})<\lh(s_{(n)_0})\wedge s_{(n)_1}(\lh(s_{(m)_0}))\prec_{\zeta }0\big) ;
\end{multline*}

or
\item[-] there exists $i<\min\{\lh(s_{(n)_0}),\lh(s_{(m)_0})\}$ satisfying
\[\forall j<i~\forall\varepsilon\in\{ 0,1\}\big(s_{(n)_\varepsilon}(j)=s_{(m)_\varepsilon}(j)\big),\]
and
\begin{multline*}
\big(s_{(n)_0}(i)<_Ks_{(m)_0}(i)\wedge0\prec_{\zeta }s_{(m)_1}(i)\big)\lor\\
\big(s_{(m)_0}(i)<_Ks_{(n)_0}(i)\wedge s_{(n)_1}(i)\prec_{\zeta }0\big)\lor\\
\big(s_{(n)_0}(i)=s_{(m)_0}(i)\wedge s_{(n)_1}(i)\prec_{\zeta }s_{(m)_1}(i)\big).
\end{multline*}
\end{itemize}
This is, given $K$, a recursive encoding of an order $\zeta^K$ isomorphic to $ \Z^K$.
\end{proof}

\begin{theorem} \label{main}
The set $\StS$ is hard for the class $\check{\mathrm{D}}_2(\mathbf{\Pi}^1_1)$.
\end{theorem}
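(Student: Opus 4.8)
The plan is to reduce to $\StS$ a set that is complete for $\check{\mathrm{D}}_2(\mathbf{\Pi}^1_1)$. Since $\WO$ is $\boldsymbol\Pi^1_1$-complete and $\lin\setminus\scat$ is $\boldsymbol\Sigma^1_1$-complete, the set
\[
C=\{(M,L)\in\lin\times\lin\mid M\in\WO\ \text{ or }\ \Q\inj L\}
\]
is the union of the $\boldsymbol\Pi^1_1$ set $\WO\times\lin$ and the $\boldsymbol\Sigma^1_1$ set $\lin\times(\lin\setminus\scat)$. Pairing the continuous reductions witnessing completeness of the two factors, every union of an analytic and a coanalytic set reduces continuously to $C$, so $C$ is $\check{\mathrm{D}}_2(\mathbf{\Pi}^1_1)$-complete. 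It then suffices to build a continuous $\Phi\colon\lin\times\lin\to\lin$ with $\Phi(M,L)\in\StS$ if and only if $(M,L)\in C$.

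My candidate is
\[
\Phi(M,L)=(L\omega)^\star+\zeta^M+L\omega,
\]
where $\zeta^M\simeq\Z^M$ comes from the recursive operation of Proposition \ref{Psiexp}; as sums, products and $M\mapsto\zeta^M$ are recursive on $\lin$, $\Phi$ is continuous. The design is dictated by the pieces. By Propositions \ref{prop:powersofZ} and \ref{prop:nonordinalpowerofZ}, $\Z^M$ is strongly surjective for \emph{every} $M$; it is scattered when $M\in\WO$ and it is $\equiv_s\Q$ (hence non-scattered with no scattered initial or final segment) when $M\notin\WO$. Meanwhile $L\omega$, and symmetrically $(L\omega)^\star$, is scattered exactly when $L$ is, and so are its tails. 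Hence $\Phi(M,L)$ has neither a minimum nor a maximum, it is non-scattered precisely when $M\notin\WO$ or $\Q\inj L$, and a scattered end appears exactly when $L$ is scattered.

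First I would settle the non-scattered case via Proposition \ref{nonscat}: whenever $\Phi(M,L)$ is non-scattered it is strongly surjective iff it has no scattered initial and no scattered final segment. Since the central block $\zeta^M$ is flanked by the $L$-controlled blocks, every initial segment reaching $\zeta^M$ contains all of $(L\omega)^\star$ and every final segment reaching it contains all of $L\omega$; thus the presence of a scattered end depends only on $L$. This gives at once: if $\Q\inj L$ there are no scattered ends and $\Phi(M,L)\in\StS$ (the cases where $C$ holds through its right disjunct); and if $L$ is scattered while $M\notin\WO$, then $\zeta^M$ makes $\Phi(M,L)$ non-scattered yet $L\omega$ is a scattered final segment, so $\Phi(M,L)\notin\StS$ — exactly the region where $C$ fails.

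The remaining case, $M\in\WO$ and $L$ scattered, is the genuinely hard one: here $\Phi(M,L)=(L\omega)^\star+\Z^\alpha+L\omega$ (with $\zeta^M\simeq\Z^\alpha$) is itself scattered, and I must prove it is strongly surjective, so that the left disjunct $M\in\WO$ forces membership in $\StS$. Proposition \ref{ssrestricted} does not apply directly, because the flanking summands $(L\omega)^\star$ and $L\omega$ need not be strongly surjective. Instead I would argue by hand: given an arbitrary suborder, I would build an epimorphism onto it \emph{by pieces}, combining the coinitial and cofinal room provided by the two $L\omega$-blocks with the strong surjectivity and self-absorbing behaviour of $\Z^\alpha$, using the mashing of Proposition \ref{familymash}, the truncation operators $\sigma^l,\sigma_l,\sigma_l^{l'}$, and the normal forms of Example \ref{ordinalstarplusordinal} after the usual ordinal-arithmetic absorptions. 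This scattered case is where the bulk of the work lies and is the main obstacle; combined with the three easy cases it yields $\Phi(M,L)\in\StS\iff(M,L)\in C$, completing the proof of the lower bound for Theorem \ref{mainintroduction}.
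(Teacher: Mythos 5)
Your first three cases are fine, and so is the completeness of your set $C$ (the paper uses the analogous set $\{(K,L)\mid K\notin\WO\lor L\in\WO\}$). But the fourth case --- the one you call the main obstacle --- is not merely hard: the claim you need there is \emph{false}, so $\Phi$ is not a reduction. Take $M$ a singleton, so $M\in\WO$ and $\zeta^M\simeq\Z$, and take the scattered order $L=1+\omega^\star$; then $(M,L)\in C$. Now $L\omega=(1+\omega^\star)\omega\simeq1+\omega^\star\omega$, and every nonempty proper initial segment of this order has a maximum; in particular $L\omega$ has no gaps and has a minimum. Dually $(L\omega)^\star\simeq\omega\omega^\star+1$ has no gaps and has a maximum, and $\Z$ has no gaps. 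Hence $\Phi(M,L)=(L\omega)^\star+\Z+L\omega$ has no gaps at all: a cut lying inside one of the three summands is not a gap because that summand has none, and the two junction cuts are not gaps because there the left summand has a maximum (respectively, the right summand has a minimum). On the other hand $\omega+\omega^\star\inj\Phi(M,L)$: take the nonnegative half of the middle copy of $\Z$ followed by the copy of $\omega^\star$ inside the first block $1+\omega^\star$ of $L\omega$. Since $\omega+\omega^\star$ has a gap while $G(\Phi(M,L))=\es$, the proposition on gaps in Section \ref{unctbless} (if $K\strong M$ then $G(K)\inj G(M)$) yields $\omega+\omega^\star\not\strong\Phi(M,L)$. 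So $\Phi(M,L)\notin\StS$ although $(M,L)\in C$, and the reduction fails. Nor is this an artifact of $\alpha=1$: for $\zeta^M\simeq\Z^\alpha$ with $\alpha\geq2$ the middle summand does have gaps, but they all lie inside $\Z^\alpha$, below the gap-free final segment $L\omega$; taking the suborder $(\omega+\omega^\star)\omega\inj\Z^\alpha$, the preimages of its $\omega$ many blocks under a putative epimorphism would each have to straddle a gap of $\Phi(M,L)$ while being consecutive and jointly covering $L\omega$, so eventually some preimage would lie entirely inside $L\omega$ and could straddle no gap --- a contradiction.

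The conceptual point is the one the paper makes at the start of Subsection \ref{operations}: sums of strongly surjective orders need not be strongly surjective, so no amount of mashing and truncation will make an additive construction such as $(L\omega)^\star+\zeta^M+L\omega$ work uniformly in $L$; suborders that mix material from different summands (here $\omega+\omega^\star$) impose gap conditions that the ambient sum cannot meet. The paper's proof of Theorem \ref{main} avoids this by using the multiplicative witness $f(K,L)=\zeta^K(1+L)\omega$: products of strongly surjective orders are strongly surjective (Corollary \ref{multiple}) and, conversely, one may divide out the scattered factor $\zeta^K$ (Proposition \ref{quotient}), so in the scattered case $f(K,L)\in\StS$ if and only if $(1+L)\omega\in\StS$, which by Proposition \ref{basics2}(1) and Theorem \ref{ssordinal} holds exactly when $L\in\WO$. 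Any repair of your argument would have to import this quotient/product structure, i.e.\ essentially become the paper's proof.
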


\begin{proof}
First observe that the set
\[
A=\{(K,L) \in \lin \times \lin \mid K \notin \WO \lor L \in \WO\}
\]
is $\check{\mathrm{D}}_2(\mathbf{\Pi}^1_1)$-complete: if $B=B_0\cup B_1$, with $f_0$ reducing $B_0$ to $\lin\setminus\WO$
and $f_1$ reducing $B_1$ to $\WO$, then $(f_0,f_1)$ reduces $B$ to $A$.

We now prove that $\StS$ continuously
reduces $A$. To this end we use the continuous map $f$ defined by
\[
f(K,L) = \zeta^K (1+L)\omega.
\]

If $K \notin \WO$ then by Proposition \ref{prop:nonordinalpowerofZ} $\Q
\strong \zeta^K$. Then $f(K,L)$ has no initial or final segment which is
scattered and by Proposition \ref{nonscat} we have $f(K,L) \in \StS$. If $K
\in \WO$ then $\zeta^K$ is isomorphic to an ordinal power of $\zeta$, which
is scattered and strongly surjective by Proposition \ref{prop:powersofZ}. In
that case, using both Proposition \ref{quotient} and Corollary
\ref{multiple}, $f(K,L) \in \StS$ if and only if $(1+L)\omega \in \StS $.
Finally, since $(1+L) \omega$ has a minimum, Proposition \ref{basics2}(1) and
Theorem \ref{ssordinal} tell us that $(1+L)\omega \in \StS$ if and only if $L
\in \WO$, which concludes the proof.
\end{proof}


\section{Bounding the complexity of $\StS$ from above} \label{analysis}

Given a set $A\neq\es$, the spaces $A^{ \N }$ and $A^{ \N \times \N }$ are
endowed with the product topology of the discrete topology on $A$.

Given $K, L$ in $\lin$, $\Emb(L,K)$ is a closed subspace of $K^L$, so it is
closed in $( \N \cup\{ *\} )^{ \N }$ as well, where $*$ is a new symbol (we
map the elements of $ \N \setminus L$ to $*$).
Similarly, the space $\Epi(L,K)$ is
$\mathbf{\Pi}^0_2$ in $L^K$ and hence a
$\mathbf{\Pi}^0_2$ subspace of $( \N \cup\{ *\} )^{ \N }$. Therefore both $\Emb(L,K)$ and $\Epi(L,K)$ are Polish
spaces.

\subsection{Some effective facts}
We assume some familiarity with basic recursion theory. For effective
descriptive set theory, we refer the reader to Section 3E of \cite{moschovakis}
or to \cite{louveau}. Notice that $\lin$ is a Polish recursive space in the
sense of Section 2.4.3 in \cite{louveau}.

We make a heavy use of Chapter
4 of \cite{moschovakis}. Let us recall the following well-known facts
(\cite{moschovakis}, 4D.3 and \cite{louveau}, Section 5.1.5, respectively). We state them in relativized form, fixing a parameter $K$.

\begin{fact}\label{Spector}
If $X$ and $Y$ are recursive spaces and $\mathsf{A} \subseteq X \times Y$ is
$\Pi^1_1(K)$ then $\{x \in X \mid \exists y \in \Delta^1_1(x,K) (x,y) \in
\mathsf{A}\}$ is $\Pi^1_1(K)$.
\end{fact}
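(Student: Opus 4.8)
The plan is to trade the quantifier over the hyperarithmetic points $y \in \Delta^1_1(x,K)$ for an ordinary number quantifier over \emph{codes} for such points, using the relativized Kleene--Spector theory of $\Delta^1_1$-parametrization, and then to substitute the decoded point into $\mathsf{A}$ in a way that preserves being $\Pi^1_1$. Since $Y$ is a recursive space, its points are coded by reals, so the $y \in \Delta^1_1(x,K)$ are exactly the points hyperarithmetic in $(x,K)$; there are only countably many of them and each can be named by a natural number. The first step is to invoke the standard machinery of \cite[Chapter 4]{moschovakis} to produce a $\Pi^1_1(K)$ set $\mathcal{C}$ of pairs $(x,d)$, the valid codes $d$ of $\Delta^1_1(x,K)$ points, together with a decoding $d \mapsto y_d$ that is $\Delta^1_1(x,K)$-uniform on $\mathcal{C}$: the relation ``$(x,d) \in \mathcal{C}$ and $y_d(n)=m$'' is $\Delta^1_1(K)$ in $(x,d,n,m)$, and every $\Delta^1_1(x,K)$ point of $Y$ equals $y_d$ for some $(x,d) \in \mathcal{C}$. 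This gives
\[
\exists y \in \Delta^1_1(x,K)\ \mathsf{A}(x,y) \quad\Longleftrightarrow\quad \exists d \in \N\ \big[(x,d)\in\mathcal{C} \ \wedge\ \mathsf{A}(x,y_d)\big].
\]

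The second step is to show that the matrix on the right is $\Pi^1_1(K)$ in $(x,d)$. The conjunct $(x,d)\in\mathcal{C}$ is $\Pi^1_1(K)$ by construction. For $\mathsf{A}(x,y_d)$, I would use the tree normal form of the $\Pi^1_1(K)$ set $\mathsf{A}$: there is a tree $T_{x,y}$ recursive in $(x,y,K)$ with $\mathsf{A}(x,y)$ holding iff $T_{x,y}$ is well-founded. Because $y_d$ is computed $\Delta^1_1(x,K)$-uniformly from $d$ on $\mathcal{C}$, the tree $T_{x,y_d}$ is $\Delta^1_1(K)$ in $(x,d)$, and well-foundedness of a $\Delta^1_1(K)$ tree is a $\Pi^1_1(K)$ condition; hence the whole matrix is $\Pi^1_1(K)$. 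Prefixing the existential number quantifier $\exists d \in \N$ then keeps the predicate $\Pi^1_1(K)$, since $\Pi^1_1$ is closed under $\exists^0$. This yields that $\{x \mid \exists y \in \Delta^1_1(x,K)\ (x,y)\in\mathsf{A}\}$ is $\Pi^1_1(K)$, as claimed.

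I expect the genuine content, and the only real obstacle, to lie entirely in the first step: producing the $\Pi^1_1(K)$ code set $\mathcal{C}$ together with a decoding that is $\Delta^1_1$ uniformly on it. This is exactly the Kleene--Spector analysis of hyperarithmetic reals (boundedness, stage comparison, and the $\Pi^1_1$-uniformization underlying $\Delta^1_1$-selection) that is packaged as \cite[4D.3]{moschovakis}. Granting it, the substitution argument above is routine, the essential point being that once $y$ has been replaced by its natural-number code, no real quantifier survives and the closure of $\Pi^1_1$ under number quantification finishes the proof.
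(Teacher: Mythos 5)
The paper does not actually prove this statement: Fact \ref{Spector} is stated as a known result, quoted in relativized form from \cite[4D.3]{moschovakis} (Kleene's theorem on restricted quantification), so there is no internal argument to compare yours against. Your proposal reconstructs the standard proof of that cited theorem, and its skeleton is exactly the classical one: parametrize the points of $Y$ in $\Delta^1_1(x,K)$ by a $\Pi^1_1(K)$ set $\mathcal{C}$ of codes, trade the point quantifier for a number quantifier over codes, check that substituting the decoded point into $\mathsf{A}$ stays $\Pi^1_1(K)$, and finish by closure of $\Pi^1_1$ under $\exists^0$.

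Two repairs are needed, though. First, your spelled-out version of the decoding property is false as stated: the relation ``$(x,d)\in\mathcal{C}$ and $y_d(n)=m$'' cannot be $\Delta^1_1(K)$. Indeed, since every valid code decodes to a total point, $\mathcal{C}$ is obtained from that relation by a number quantification, under which $\Delta^1_1(K)$ is closed; so $\mathcal{C}$ itself would be $\Delta^1_1(K)$, and then (fixing $x$ and taking $Y=\Bai$) the diagonal point $z$ defined by $z(n)=y_n(n)+1$ when $(x,n)\in\mathcal{C}$ and $z(n)=0$ otherwise would be a $\Delta^1_1(x,K)$ point missing from the enumeration --- a contradiction. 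What the parametrization theorem actually provides is that the decoding is $\Delta^1_1$ \emph{on} $\mathcal{C}$: there are a $\Pi^1_1(K)$ relation and a $\Sigma^1_1(K)$ relation that agree with ``$y_d(n)=m$'' whenever $(x,d)\in\mathcal{C}$ (equivalently, $(x,d)\mapsto y_d$ is $\Pi^1_1$-recursive on its domain). This weaker property, which matches your earlier phrase ``$\Delta^1_1$-uniform on $\mathcal{C}$'', is all your second step needs: on $\mathcal{C}$, membership of a finite sequence in $T_{x,y_d}$ has both $\Sigma^1_1(K)$ and $\Pi^1_1(K)$ definitions in $(x,d)$, so well-foundedness, written $\forall\alpha\,\exists n\,\neg S(x,d,\alpha\res n)$ using the $\Sigma^1_1$ side, is $\Pi^1_1(K)$; conjoining with $(x,d)\in\mathcal{C}$ and prefixing $\exists d$ preserves $\Pi^1_1(K)$. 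Second, the machinery producing $\mathcal{C}$ and the decoding is \cite[4D.2]{moschovakis}, not 4D.3: as 4D.3 is precisely the statement being proved, citing it for your first step makes the argument circular. With these corrections your proof is the standard one and is correct.
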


\begin{fact}\label{uniformization}
Let $X$ be a recursive space, $Y$ a $\Pi^1_1(K)$ subset of a recursive space,
and $\mathsf{A} \subseteq X \times Y$ a $\Pi^1_1(K)$ set. Then there exists a $\Delta^1_1(K)$ function $f:\exists^\Delta \mathsf{A}\to Y$ which uniformizes $ \mathsf{A} $ on $\exists^\Delta \mathsf{A}$, where
\[
x\in\exists^\Delta \mathsf{A} \Longleftrightarrow \exists y\in\Delta^1_1(x,K)\big((x,y)\in \mathsf{A}\big).
\]
\end{fact}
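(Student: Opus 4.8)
The statement is the effective $\Pi^1_1$-uniformization theorem, refined so that the domain is restricted to the parameters admitting a hyperarithmetic witness, and I would prove it by combining the effective (Kond\^o--Addison) uniformization with the boundedness analysis that underlies Fact \ref{Spector}. First I would reduce to the case in which the target is a recursive space: since $Y$ is $\Pi^1_1(K)$ inside some recursive space $Z$, the constraint $y\in Y$ is already built into $\mathsf{A}$, so I may regard $\mathsf{A}$ as a $\Pi^1_1(K)$ subset of $X\times Z$; any witness eventually selected lies in $Y$ automatically because $\mathsf{A}\subseteq X\times Y$. By Fact \ref{Spector} the intended domain $\exists^\Delta\mathsf{A}$ is itself $\Pi^1_1(K)$, a fact I will need at the very end.

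Next I would apply the effective $\Pi^1_1$-uniformization theorem \cite[Chapter 4]{moschovakis} to obtain a $\Pi^1_1(K)$ set $\mathsf{A}^\ast\subseteq\mathsf{A}$ uniformizing $\mathsf{A}$, and let $f$ be the associated partial function, whose graph is $\mathsf{A}^\ast$ and hence $\Pi^1_1(K)$. The point that makes the restriction to $\exists^\Delta\mathsf{A}$ the natural one is that the canonical witness produced by a $\Pi^1_1(K)$-scale is the hyperarithmetically least branch in the section: invoking the associated $\Pi^1_1(K)$-norms together with the Spector--Gandy boundedness phenomenon, one shows that for $x\in\exists^\Delta\mathsf{A}$ --- precisely the parameters whose section $\mathsf{A}_x$ contains a $\Delta^1_1(x,K)$ point --- the selected value $f(x)$ is itself in $\Delta^1_1(x,K)$. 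Thus $f\restriction\exists^\Delta\mathsf{A}$ is a function taking hyperarithmetic-in-parameter values, with $\Pi^1_1(K)$ graph defined on the $\Pi^1_1(K)$ set $\exists^\Delta\mathsf{A}$.

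Finally I would upgrade $f\restriction\exists^\Delta\mathsf{A}$ from a function with merely $\Pi^1_1(K)$ graph to a genuine $\Delta^1_1(K)$ function. Its graph is $\mathsf{A}^\ast\cap(\exists^\Delta\mathsf{A}\times Z)$, which is $\Pi^1_1(K)$, so it remains to present the same graph as $\Sigma^1_1(K)$. Here I would use the general principle that a single-valued relation with $\Pi^1_1(K)$ graph, defined on a $\Pi^1_1(K)$ domain and taking values in $\Delta^1_1(x,K)$, is in fact $\Delta^1_1(K)$: exploiting the $\Delta^1_1(K)$-uniform decoding of hyperarithmetic points, one rewrites $(x,y)$ lying in the graph as the existence of a valid $\Delta^1_1(x,K)$-code decoding to $y$ and passing a (now $\Sigma^1_1$) acceptance test, thereby trading the $\Pi^1_1$ verification for a $\Sigma^1_1$ one. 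I expect the main obstacle to be exactly this control of the complexity of the witness: first guaranteeing that the uniformizing selection lands in $\Delta^1_1(x,K)$ across all of $\exists^\Delta\mathsf{A}$ (the scale-and-boundedness step), and then carrying out the $\Pi^1_1$-to-$\Sigma^1_1$ conversion of the restricted graph. Both steps rest on the effective theory of $\Pi^1_1$-norms and are entirely independent of the linear-order content of the paper.
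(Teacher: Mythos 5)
The paper itself does not prove this statement: it is quoted as a known result (the relativized $\Delta$-selection principle, \cite[4D.4]{moschovakis}, in the form given in Louveau's notes), so your argument must be judged on its own merits. It has a genuine gap, located exactly where you predicted the ``main obstacle'' would be: the claim that the Kond\^o--Addison selected point $f(x)$ lies in $\Delta^1_1(x,K)$ whenever the section $\mathsf{A}_x$ contains a $\Delta^1_1(x,K)$ point. This is false for the scale-based uniformization. What boundedness genuinely gives is that the norms of the selected point are majorized by the norms at the $\Delta^1_1(x,K)$ witness, hence lie below $\omega_1^{x,K}$; but small norms do not make the selected point hyperarithmetic. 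Concretely: let $C=\{0\conc y\mid y\notin\Delta^1_1\}$, a nonempty lightface $\Sigma^1_1$ set with no $\Delta^1_1$ members, write $C=p[S]$ for a recursive tree $S$ on $\N\times\N$, let $T$ be the recursive tree of interleaved pairs, so that $[T]\neq\es$ and every element of $[T]$ computes an element of $C$ and begins with $0$, and consider sections equal to the recursive closed set $[T]\cup\{\bar 9\}$, where $\bar 9=(9,9,9,\dots)$. Every section contains a recursive point, so $\exists^\Delta\mathsf{A}$ is everything; but on a $\Pi^0_1$ set the canonical $\Pi^1_1$-scale has uniformly trivial ordinal components, so the scale minimization degenerates into plain lexicographic minimization, and the selected point is the leftmost branch of $[T]$, which is not $\Delta^1_1$. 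Thus the Kond\^o selection lands outside $\Delta^1_1(x,K)$ even though $\Delta^1_1(x,K)$ witnesses exist; moreover ``the hyperarithmetically least branch'' does not exist in general, so no rephrasing of the scale argument repairs this.

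The correct route (the one behind the cited sources) replaces your steps (3)--(4) by uniformization in an \emph{integer} variable. By the effective enumeration of hyperarithmetic points (\cite[4D.2]{moschovakis}, relativized to $K$) there are a $\Pi^1_1(K)$ set $D\subseteq X\times\N$ and a partial $\Pi^1_1(K)$-recursive decoding map $d$ such that $\Delta^1_1(x,K)\cap Y=\{d(x,n)\mid(x,n)\in D\}$. Then $P(x,n)\iff(x,n)\in D\wedge\big(x,d(x,n)\big)\in\mathsf{A}$ is $\Pi^1_1(K)$; one selects the least $n$ in the sense of a $\Pi^1_1(K)$-norm (number selection, the easy case of uniformization where no scale is needed) and sets $f(x)=d(x,n)$. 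Now the values are in $\Delta^1_1(x,K)$ by construction, and your final step --- using the decoding together with Fact \ref{Spector} to show that both the graph of $f$ and its complement relative to the $\Pi^1_1(K)$ domain $\exists^\Delta\mathsf{A}$ are $\Pi^1_1(K)$, so that $f$ is a $\Delta^1_1(K)$ function there --- is correct and is exactly how the standard argument concludes. In short, your steps (1), (2) and (5) are sound, but the heart of the proof must select over codes of hyperarithmetic points, not over reals via Kond\^o.
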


Some basic operations on linear orders
are effective.

\begin{fact}
The operation $\star:L\mapsto L^\star$ is recursive.
\end{fact}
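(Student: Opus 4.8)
The claim is that the reversal operation $\star\colon L\mapsto L^\star$ is recursive as a map $\lin\to\lin$. The plan is to exhibit, given a code for $(K,\leq_K)\in\lin$ sitting inside $2^{\N\times\N}$, an explicit recursive recipe producing a code for the reversed order on the same underlying domain, and then observe that the recipe is recursive uniformly in the input.

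First I would recall that an element of $\lin$ is literally a point of $2^{\N\times\N}$, namely the characteristic function of a binary relation $\leq_K$ on $\dom(\leq_K)\subseteq\N$. The reverse order $L^\star$ has the same domain $\dom(\leq_K)$ and is defined by $x\leq_{L^\star}y\Longleftrightarrow y\leq_K x$. So the map I want sends the point $\chi\in 2^{\N\times\N}$ coding $\leq_K$ to the point $\chi'$ coding the transposed relation, i.e.\ $\chi'(x,y)=\chi(y,x)$ for all $(x,y)\in\N\times\N$. This is nothing but the coordinate-swapping homeomorphism of $2^{\N\times\N}$ induced by the recursive bijection $(x,y)\mapsto(y,x)$ of $\N\times\N$, so it is patently continuous; the point of the statement is that it is moreover \emph{recursive} as an operation on the recursive Polish space $\lin$.

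To see recursiveness I would unwind what it means for an operation on $\lin$ to be recursive in the sense of \cite[Section 2.4.3]{louveau}: it suffices that, relative to the input point $\leq_K$, membership of a given pair $(x,y)$ in the output relation is decidable uniformly, which here is immediate since $\chi'(x,y)=\chi(y,x)$ queries a single bit of the oracle after applying the fixed recursive pairing/unpairing functions. One then checks that the transposed relation is indeed again a linear order on $\dom(\leq_K)$: reflexivity, antisymmetry, totality and transitivity are all preserved under reversing $\leq_K$, and the domain is unchanged, so $\chi'$ genuinely lands in $\lin$. Thus the recipe defines a total recursive function $\lin\to\lin$ carrying each $L$ to $L^\star$.

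There is essentially no obstacle here: the only thing to be careful about is bookkeeping, namely to confirm that reversal preserves the defining closed conditions cutting $\lin$ out of $2^{\N\times\N}$ (so that the output is a legitimate element of $\lin$ rather than merely a point of $2^{\N\times\N}$), and to invoke the fixed recursive coding of $\N\times\N$ so that transposition is computed by a single recursive index. Since these are routine verifications and the swap map is manifestly computable bit-by-bit, the operation $L\mapsto L^\star$ is recursive, as claimed.
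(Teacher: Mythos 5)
Your proof is correct: the paper states this Fact without any proof, treating it as routine, and your argument --- that reversal is just the transposition $\chi'(x,y)=\chi(y,x)$ of the characteristic function, computed bit-by-bit via the recursive coordinate swap, together with the check that the transposed relation is still a linear order on the same domain --- is precisely the routine verification being taken for granted. Nothing is missing.
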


We now spell out what we mean by saying that the definition by pieces of
Lemma \ref{definitionbypieces} is $\Delta^1_1$.
\begin{fact}\label{fact:effective}
The following sets are $\Delta^1_1$:
\begin{arabiclist}
\item for $I$ an interval of $\Z$, the set of $(K,(K_i)_{i\in I})$ such
    that $K \in \lin$ and $(K_i)_i$ is a nice family
    of $K$,
\item the same with nice covering,
\item the set of triples $(K,L,M)$ such that $L$ and $M$ are adjacent
    convex subsets of $K$,
\item the set of triples $(K,L,M)$ such that $L$ and $M$ are connected
    convex subsets of $K$,
\item the set of pairs $(L,n)$ such that $n$ is the maximum of $L$.
\end{arabiclist}
\end{fact}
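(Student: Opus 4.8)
The plan is to give, for each of the five sets, an explicit defining formula and to read off from the pattern of its quantifiers that the set is (lightface) arithmetical; since every arithmetical pointset is $\Delta^1_1$ (see \cite{moschovakis}), this immediately yields the conclusion. The only genuine preliminary is to fix the recursive codings: $\Z$ is coded recursively inside $\N$, an interval $I$ of $\Z$ is presented so that ``$i\in I$'' and the successor map $i\mapsto i+1$ are recursive, and a family $(K_i)_{i\in I}$ is coded by a single point for which the relation ``$x\in K_i$'' is recursive. Under such codings every quantifier over indices $i$ or over elements of $K$ becomes a number quantifier applied to a recursive matrix built from the recursive relations $x\in K_i$, $x\in L$, $x\in M$, $x\leq_K y$ and $x\leq_L y$; counting the quantifier alternations then directly bounds the arithmetical complexity.

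For (1), ``$(K_i)_{i\in I}$ is a nice family of $K$'' is the conjunction of: $K\in\lin$, which is $\Pi^0_1$; non-emptiness $\forall i\,(i\in I\to\exists x\ x\in K_i)$, which is $\Pi^0_2$; convexity of each $K_i$ in $K$, expressed by $\forall i\,\forall x,y,z\,((i\in I\wedge x\in K_i\wedge z\in K_i\wedge x\leq_K y\wedge y\leq_K z)\to y\in K_i)$, which is $\Pi^0_1$; monotonicity $\forall i\,\forall x\,\forall y\,((i\in I\wedge i+1\in I\wedge x\in K_i\wedge y\in K_{i+1})\to x\leq_K y)$, which is $\Pi^0_1$; and unboundedness above and below, e.g. $\forall x\,(x\in K\to\exists i\,\exists y\,(i\in I\wedge y\in K_i\wedge x\leq_K y))$, which is $\Pi^0_2$. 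This finite conjunction is arithmetical. For (2) one simply adds the covering clause $\forall x\,(x\in K\to\exists i\,(i\in I\wedge x\in K_i))$, again $\Pi^0_2$.

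For (3), $(K,L,M)$ has $L,M$ adjacent convex subsets of $K$ iff $L$ and $M$ are convex in $K$ (each $\Pi^0_1$ as above), $L\leq M$, i.e. $\forall x\,\forall y\,((x\in L\wedge y\in M)\to x\leq_K y)$, which is $\Pi^0_1$, and there is no separating point, i.e. $\neg\exists x\,(x\in K\wedge\forall a\,(a\in L\to a<_K x)\wedge\forall b\,(b\in M\to x<_K b))$, which is $\Pi^0_2$. For (4), connectedness keeps the convexity and the clause $L\leq M$ but replaces the last requirement by $L\nless M$, i.e. $\exists x\,\exists y\,(x\in L\wedge y\in M\wedge y\leq_K x)$, which is $\Sigma^0_1$. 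For (5), ``$n$ is the maximum of $L$'' is $n\in L\wedge\forall m\,(m\in L\to m\leq_L n)$, which is $\Pi^0_1$. In each case the defining formula is arithmetical, so the set is $\Delta^1_1$.

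I expect no conceptual obstacle: the content is entirely the quantifier bookkeeping above. The one place where care is actually required is the choice of codings, so that ``$i\in I$'', ``$x\in K_i$'' and the successor $i\mapsto i+1$ are uniformly recursive; once this is settled the five verifications reduce to the routine counts just given.
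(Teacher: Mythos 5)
Your proposal is correct and matches the paper's intent: the paper states this as a \emph{Fact} with no proof precisely because the verification is the routine quantifier bookkeeping you carry out, namely that each defining condition is arithmetical (in fact low-level $\Pi^0_2$/$\Sigma^0_2$) in the codes, hence $\Delta^1_1$. The only (harmless) omissions are the equally trivial $\Pi^0_1$ clauses that each $K_i$ is a subset of $K$ carrying the induced order, and non-emptiness of $L,M$ in items (3) and (4).
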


\begin{notation}~
\begin{itemize}
\item For $I$ an interval of $\Z$, $\Pieces_I$ stands for the set of all
    $(\bar{K}, \bar{L}) := ((K,(K_i)_{i\in I}), (L,(L_i)_{i\in I}))$ in
    $(\lin \times \lin^I)^2$ such that: $(K_i)_{i\in I}$ is a nice family
    of convex subsets of $K$; $(L_i)_{i\in I}$ is a nice covering of $L$;
    for any $i\in I$ when $K_i$ and $K_{i+1}$ are not adjacent then $L_i$
    has a maximum or $L_{i+1}$ has a minimum; and if $K_i$ and $K_{i+1}$
    are connected, so are $L_i$ and $L_{i+1}$.
\item Call $\Epi$ the space $( \N \cup\{*\})^{ \N }\cup\{\bot\}$ where
    $\bot$ is not an element of $( \N \cup\{*\})^{ \N }$; $\Epi$ is equipped with the smallest Polish topology
    extending that of $( \N \cup\{*\})^{ \N }$ and making $\{\bot\}$ a
    clopen set.
\end{itemize}
\end{notation}

\begin{fact}\label{fact:defpiecesiseffective}
For any interval $I$ of $\Z$, $\Pieces_I$ is $\Delta^1_1$ and so is the map
\begin{align*}
\DefPieces_I:\Pieces_I\times\Epi^I&\lgrao\Epi\\
\big((\bar{K},\bar{L}),(\sigma_i)_{i\in I}\big)&\lgto\begin{cases}
\sigma&\mbox{if }\forall i\in I\ \sigma_i\in \Epi(L_i,K_i) \\
\bot&\mbox{otherwise,}
\end{cases}
\end{align*}
where $\sigma\in \Epi(L,K)$ is given by Lemma \ref{definitionbypieces}.
\end{fact}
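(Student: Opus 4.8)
The plan is to reduce everything to Fact \ref{fact:effective} together with the closure of the pointclass $\Delta^1_1$ under Boolean operations and number quantifiers. The key observation is that, since $I$ is a fixed recursive interval of $\Z$, any quantification of the form $\forall i\in I$ or $\exists i\in I$ is a number quantifier, and $\Delta^1_1=\Sigma^1_1\cap\Pi^1_1$ is closed under $\exists^0$, $\forall^0$, as well as under $\wedge$, $\vee$ and $\neg$. So the whole content is to write out the defining conditions of $\Pieces_I$ and of $\DefPieces_I$ using only the $\Delta^1_1$ relations supplied by Fact \ref{fact:effective}, with the index $i$ (and later the arguments $k,m$) kept as free variables, and then absorb the number quantifiers.

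First I would check that $\Pieces_I$ is $\Delta^1_1$. By definition, $(\bar K,\bar L)\in\Pieces_I$ is the conjunction of: ``$(K_i)_{i\in I}$ is a nice family of $K$'' (item (1) of Fact \ref{fact:effective}); ``$(L_i)_{i\in I}$ is a nice covering of $L$'' (item (2)); and, for every $i\in I$, the clause asserting that if $K_i,K_{i+1}$ are not adjacent then $L_i$ has a maximum or $L_{i+1}$ has a minimum, and that if $K_i,K_{i+1}$ are connected then so are $L_i,L_{i+1}$. The predicates ``adjacent'', ``connected'' and ``has a maximum/minimum'' are $\Delta^1_1$, uniformly in $i$, by items (3), (4) and (5). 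Hence the $i$-th clause is $\Delta^1_1$ uniformly in $i$, the universal number quantifier $\forall i\in I$ keeps it $\Delta^1_1$, and the finite conjunction with items (1)--(2) is then $\Delta^1_1$.

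Next, to see that $\DefPieces_I$ is $\Delta^1_1$ it suffices to show that its graph is $\Delta^1_1$, equivalently that the relation $\DefPieces_I(\text{input})=\tau$ is $\Delta^1_1$ as $\tau$ ranges over $\Epi$. I would split according to the defining clause. The good set $G$ on which the output is a genuine element of $(\N\cup\{*\})^{\N}$ is
\[
G=\big\{\big((\bar K,\bar L),(\sigma_i)_{i\in I}\big)\mid (\bar K,\bar L)\in\Pieces_I\wedge\forall i\in I\ \sigma_i\in\Epi(L_i,K_i)\big\}.
\]
The predicate $\sigma_i\in\Epi(L_i,K_i)$ says that $\sigma_i\neq\bot$, that $\sigma_i$ sends $K_i$ into $L_i$ and everything else to $*$, that it is order preserving, and that it is onto $L_i$; this is a $\Pi^0_2$, hence arithmetical, hence $\Delta^1_1$ condition, and $\forall i\in I$ preserves $\Delta^1_1$, so $G$ is $\Delta^1_1$. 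Off $G$ the map is constantly $\bot$, so it remains to express, on $G$, the value $\sigma(k)=m$ (for $k\in\N$, $m\in\N\cup\{*\}$) of the epimorphism $\sigma$ produced by Lemma \ref{definitionbypieces}. Unwinding that definition, $\sigma(k)=m$ holds iff either $m=*$ and $k\notin K$, or $m\neq*$ and one of the following occurs: there is $i\in I$ with $k\in K_i$ and $\sigma_i(k)=m$; or there is $i\in I$ with $K_i<\{k\}<K_{i+1}$ and $m=l_i$, where $l_i$ is the maximum of $L_i$ when it exists and the minimum of $L_{i+1}$ otherwise. Each inner relation --- membership $k\in K_i$, the betweenness $K_i<\{k\}<K_{i+1}$, the evaluation $\sigma_i(k)=m$, and the identification of $\max L_i$ or $\min L_{i+1}$ via item (5) --- is $\Delta^1_1$ uniformly in $i,k,m$, and the outer quantifiers are the number quantifiers $\exists i\in I$. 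Hence the relation $\sigma(k)=m$ is $\Delta^1_1$, so $\sigma$ depends in a $\Delta^1_1$ way on the input, and therefore $\DefPieces_I$ is $\Delta^1_1$.

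The only genuinely delicate point is uniformity: the relations of Fact \ref{fact:effective} must all be invoked with $i$ (and, in the displayed disjunction, $k$ and $m$) free, so that the subsequent number quantifiers over $i\in I$ can be absorbed without leaving the pointclass. Since $\Delta^1_1$ is closed under number quantification this causes no increase in complexity, and the well-definedness of $\sigma$ on $G$ --- already established in the proof of Lemma \ref{definitionbypieces} --- guarantees that the displayed condition determines a single value $m$ for each $k$, so that it really describes the graph of a function into $\Epi$.
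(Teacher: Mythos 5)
The paper offers no proof of this Fact at all---it is stated as a routine verification---and your argument supplies exactly the intended details: reduction to Fact \ref{fact:effective}, closure of $\Delta^1_1$ under Boolean operations and number quantifiers over the recursive index set $I\subseteq\Z$, and passage from a $\Delta^1_1$ graph (via the pointwise evaluation relation $\sigma(k)=m$, single-valued by the well-definedness established in Lemma \ref{definitionbypieces}) to a $\Delta^1_1$ function. Your proof is correct; the only cosmetic point is that Fact \ref{fact:effective}(5) mentions only maxima, so the clause ``$L_{i+1}$ has a minimum'' should be routed through the recursiveness of $L\mapsto L^\star$, which the paper records just before.
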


The explicit dependence on $I$ will be omitted, and we shall write simply $\DefPieces$ to denote this function.

\begin{fact}\label{fact:intervalsofscat}
If $K \in \scat$ then any convex suborder of $K$ is $\Delta^1_1(K)$.
\end{fact}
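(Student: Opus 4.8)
The plan is to recognize the collection of all convex subsets of $K$ as a single closed set and then apply the effective perfect set theorem. Regard subsets of $\N$ as points of $2^{\N}$ and set
\[
\mathcal{C} = \{\, C \subseteq K \mid C \text{ is convex in } K \,\}.
\]
Membership in $\mathcal{C}$ is the conjunction of $C \subseteq K$ with the convexity clause $\forall x,y,z\,(x,z\in C \wedge x\leq_K y\leq_K z \rightarrow y\in C)$; both are $\Pi^0_1(K)$ conditions, so $\mathcal{C}$ is a $\Pi^0_1(K)$, hence $\Sigma^1_1(K)$, subset of $2^{\N}$. A convex suborder of $K$ is exactly such a $C$ together with the order induced by $\leq_K$, and that induced order is recursive in $(C,K)$; so it suffices to prove that every $C\in\mathcal{C}$ is $\Delta^1_1(K)$.

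The first step is to show that $\mathcal{C}$ is \emph{thin}, i.e. has no non-empty perfect subset, and this is where scatteredness is used. A non-empty convex $C$ is completely determined by the pair consisting of its lower initial segment $\{x\in K\mid \forall c\in C\ x<_K c\}$ and its upper final segment, so $|\mathcal{C}|$ is bounded by $1$ plus the square of the number of cuts of $K$, equivalently by the cardinality of the Dedekind completion $\bar{K}$. Since $K$ is countable and scattered, $\bar{K}$ is countable: one argues by induction on the Hausdorff rank, using that the completion of an $\omega$-, $\omega^\star$- or $\Z$-indexed sum of orders with countable completions again has countable completion. Hence $\mathcal{C}$ is countable and in particular contains no perfect subset.

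The second step is a direct invocation of the effective perfect set theorem (Mansfield's theorem, relativized to the parameter $K$; see \cite[Chapter 4]{moschovakis}): a $\Sigma^1_1(K)$ set containing a real not in $\Delta^1_1(K)$ must contain a non-empty perfect set. As $\mathcal{C}$ is $\Sigma^1_1(K)$ and thin, every element of $\mathcal{C}$ lies in $\Delta^1_1(K)$; that is, every convex subset of $K$ is $\Delta^1_1(K)$, and therefore so is every convex suborder, as explained above.

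The main obstacle is the thinness claim, namely checking that a countable scattered order has only countably many cuts; once this is in place the Mansfield step is a clean application. An alternative to the Hausdorff-rank induction is the contrapositive: a perfect subset of $\mathcal{C}$ would produce a perfect family of pairwise-nested initial segments of $K$, and since initial segments are linearly ordered by inclusion one could read off from it a dense-in-itself suborder of $K$, i.e.\ a copy of $\Q$, contradicting $K\in\scat$.
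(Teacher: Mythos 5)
Your proposal is correct and follows essentially the same route as the paper's proof: there too the collection of convex suborders of a scattered countable $K$ is observed to be countable (hence thin) and $\Pi^0_1(K)$, so $\Sigma^1_1(K)$, and the conclusion is drawn from the effective perfect set theorem (cited there as Harrison's theorem, \cite[Theorem 4F.1]{moschovakis}, which is the correct attribution for the $\Sigma^1_1$ version you state). The only difference is that the paper simply cites \cite[Exercise 5.33.1]{Rosens1982} for the countability of the convex subsets of a countable scattered order, which you instead prove via cuts and a Hausdorff-rank induction; both that argument and your contrapositive sketch are sound.
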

\begin{proof}
Fix $K \in \scat$. First notice that $K$ has countably many convex subsets
(see \cite{Rosens1982}, Exercise 5.33.1)). Moreover the set of convex
suborders of $K$ is a $\Pi^0_1(K)$, and hence $\Sigma^1_1(K)$, subset of
$2^{\N \times \N}$. So an application of Harrison's Effective Perfect Set
Theorem (\cite{moschovakis}, Theorem 4F.1) concludes the proof.
\end{proof}

\begin{definition}Given $\mathsf{A}$ and $\mathsf{B}$
subsets of $\lin$, denote by $\mathsf{A}+\mathsf{B}$ the
set
\[
\{K\in\lin \mid \exists K_0 (K_0 \text{ is an initial segment of } K \land
K_0 \in \mathsf{A} \land K \setminus K_0 \in \mathsf{B})\}.
\]

Define then $n\cdot \mathsf{A}$ for $n\geq1$ by induction on $n$ by $1\cdot
\mathsf{A} = \mathsf{A}$ and $(n+1)\cdot \mathsf{A}=n \cdot \mathsf{A} +
\mathsf{A}$, finally $\FinSum(\mathsf{A})$ stands for $\bigcup_{n\in\N} (n+1)
\cdot \mathsf{A}$.
\end{definition}

\begin{fact}\label{fact:coanalyticsums}
If $\mathsf{A}$ and $\mathsf{B}$ are two $\Pi^1_1(K)$
subsets of $\scat$, then so are $\mathsf{A}+\mathsf{B}$ and
$\FinSum(\mathsf{A})$.
\end{fact}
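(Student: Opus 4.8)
The plan is to express both $\mathsf{A}+\mathsf{B}$ and $\FinSum(\mathsf{A})$ using the effective operations already available, keeping the pointclass $\Pi^1_1(K)$ closed under the relevant quantifications. The key structural fact we need is that a scattered order has only countably many initial segments, and more precisely that these initial segments are all $\Delta^1_1$ in the order. This is exactly the content of Fact \ref{fact:intervalsofscat}, since an initial segment is a particular convex suborder. This observation is what lets us replace an \emph{a priori} real quantifier over the witness $K_0$ by a quantification over $\Delta^1_1(K)$ objects, which by Fact \ref{Spector} does not raise the complexity above $\Pi^1_1$.

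First I would handle $\mathsf{A}+\mathsf{B}$. Writing out the definition, $K \in \mathsf{A}+\mathsf{B}$ holds if and only if there is an initial segment $K_0$ of $K$ with $K_0 \in \mathsf{A}$ and $K \setminus K_0 \in \mathsf{B}$. The point is that, when $K \in \scat$, any such $K_0$ is a convex (indeed initial) suborder of $K$, hence $K_0 \in \Delta^1_1(K)$ by Fact \ref{fact:intervalsofscat}; and then $K \setminus K_0$, being a final segment, is likewise $\Delta^1_1(K)$. The matrix "$K_0$ is an initial segment of $K$ and $K_0 \in \mathsf{A}$ and $K \setminus K_0 \in \mathsf{B}$" is $\Pi^1_1(K)$, since the two membership conditions are $\Pi^1_1$ by hypothesis and the initial-segment condition is arithmetical. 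Since any realizing $K_0$ lies in $\Delta^1_1(K)$, we may restrict the existential quantifier to such witnesses without changing the truth value, and Fact \ref{Spector} then yields that $\mathsf{A}+\mathsf{B}$ is $\Pi^1_1(K)$. I also need to check that $\mathsf{A}+\mathsf{B} \subseteq \scat$: this holds because a sum of two scattered orders is scattered, so the resulting class is again a subset of $\scat$, and the argument can be iterated.

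Next I would treat $n \cdot \mathsf{A}$. By induction on $n$, using the base case $1 \cdot \mathsf{A} = \mathsf{A}$ and the recursion $(n+1)\cdot \mathsf{A} = n\cdot \mathsf{A} + \mathsf{A}$, the previous paragraph shows each $n \cdot \mathsf{A}$ is a $\Pi^1_1(K)$ subset of $\scat$. For $\FinSum(\mathsf{A}) = \bigcup_{n} (n+1)\cdot \mathsf{A}$, a naive union of $\Pi^1_1$ sets need not be $\Pi^1_1$, so instead I would give a direct description: $K \in \FinSum(\mathsf{A})$ exactly when $K$ admits a decomposition into finitely many consecutive intervals each in $\mathsf{A}$. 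This is again a quantification over finite tuples of cut points, each cut determining $\Delta^1_1(K)$ convex pieces by Fact \ref{fact:intervalsofscat}; the existence of a finite partition is witnessed by a single natural number $n$ together with finitely many $\Delta^1_1(K)$ pieces, so again Fact \ref{Spector} applies and keeps us inside $\Pi^1_1(K)$.

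The main obstacle is the $\FinSum$ clause: one must avoid reading it literally as a countable union (which would only give $\Sigma^0_2$ over $\Pi^1_1$, hence potentially $\Sigma^1_2$), and instead reformulate membership as a single existential statement over a finite object all of whose components are $\Delta^1_1(K)$. The crux is therefore to verify carefully that the finitely many consecutive summands arising in a $\FinSum$-decomposition are all convex suborders of the scattered order $K$ and hence uniformly $\Delta^1_1(K)$, so that the whole existential witness lives in $\Delta^1_1(K)$ and Fact \ref{Spector} can be invoked exactly once.
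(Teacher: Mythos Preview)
Your treatment of $\mathsf{A}+\mathsf{B}$ is precisely the paper's: the witnessing initial segment is $\Delta^1_1$ in the (scattered) order by Fact~\ref{fact:intervalsofscat}, so Fact~\ref{Spector} applies.

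For $\FinSum(\mathsf{A})$ you diverge from the paper, and the divergence rests on a misconception. You write that reading $\FinSum(\mathsf{A})$ literally as $\bigcup_n (n+1)\cdot\mathsf{A}$ ``would only give $\Sigma^0_2$ over $\Pi^1_1$, hence potentially $\Sigma^1_2$''; this is not right. The class $\Pi^1_1(K)$ \emph{is} closed under $\exists^\omega$, i.e.\ under effective countable unions: dually, $\Sigma^1_1$ is closed under $\forall^\omega$ via the usual coding of a sequence of reals into a single real, or, if you prefer, just apply Fact~\ref{Spector} with the witness $y$ ranging over $\N$, every natural number being trivially $\Delta^1_1$. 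The paper exploits exactly this: once the $\mathsf{A}+\mathsf{B}$ case is established (uniformly, since Spector is uniform), each $(n+1)\cdot\mathsf{A}$ is $\Pi^1_1(K)$ uniformly in $n$, and the effective union stays $\Pi^1_1(K)$. Your alternative route---coding the entire finite partition as a single $\Delta^1_1(K)$ witness and invoking Fact~\ref{Spector} once---is correct and is essentially an unfolding of the same mechanism by hand; it costs more bookkeeping and buys nothing extra, but it is not wrong.
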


\begin{proof}
The class $\Pi^1_1(K)$ is closed by effective countable
unions, so it suffices to prove the statement for $\mathsf{A}+\mathsf{B}$.
This comes from
Facts \ref{fact:intervalsofscat} and \ref{Spector}.
\end{proof}

\subsection{Uniformizations for epimorphisms}

\begin{definition}
Given $K \in \lin$ we say that $K$
\emph{admits a $\Delta^1_1$-u\-ni\-form\-i\-za\-tion} if
there exists a $\Delta^1_1(K)$ map $\Phi_K: \lin \to \Epi$ such that
$\Phi_K(L) \in \Epi(L,K)$ when $L \strong K$ and $\Phi_K(L) = \bot$ when $L
\not\strong K$.

Given $\mathsf{A} \subseteq \lin$, if all $K \in \mathsf{A}$ admit a
$\Delta^1_1$-u\-ni\-form\-i\-za\-tion we say that $\mathsf{A}$ has the
\emph{$\Delta^1_1$-u\-ni\-form\-i\-za\-tion for epimorphisms}.
\end{definition}

Our goal is to show in Theorem
\ref{theorem:boreluniformization} below that $\scat$ has the
$\Delta^1_1$-u\-ni\-form\-i\-za\-tion for epimorphisms.

\begin{fact}\label{fact:unifstarclosed}
If $L$ admits a $\Delta^1_1$-u\-ni\-form\-i\-za\-tion, so does $L^\star$.
\end{fact}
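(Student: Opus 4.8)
The plan is to reduce the statement for $L^\star$ to the statement for $L$ by exploiting the fact, recorded earlier in the excerpt, that the reversal operation $\star:L\mapsto L^\star$ is recursive, together with the basic symmetry between epimorphisms onto $L$ and epimorphisms onto $L^\star$. The key observation is that a map $\sigma:M\to L$ is an epimorphism if and only if its ``reverse'' $\sigma$, viewed as a map $M^\star\to L^\star$, is an epimorphism; concretely, an order-preserving surjection from $M$ onto $L$ becomes an order-preserving surjection from $M^\star$ onto $L^\star$ when we reverse both orders, since reversal turns order-preserving into order-preserving (it reverses the roles of $\leq$ on both sides simultaneously). In particular $L\strong M$ holds if and only if $L^\star\strong M^\star$ holds, and one can pass effectively between a witnessing epimorphism for one and a witnessing epimorphism for the other.

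Concretely, suppose $L$ admits a $\Delta^1_1$-uniformization via a $\Delta^1_1(L)$ map $\Phi_L:\lin\to\Epi$. I would define a candidate uniformization $\Phi_{L^\star}:\lin\to\Epi$ for $L^\star$ by the composition
\[
\Phi_{L^\star}(M)=\rho\big(\Phi_L(M^\star)\big),
\]
where $M^\star$ is computed by the recursive reversal operation, and $\rho$ is the map on $\Epi$ that sends $\bot$ to $\bot$ and sends an actual function $\sigma:M^\star\to L$ (an element of $(\N\cup\{*\})^\N$) to the corresponding function $M\to L^\star$ obtained by reindexing through the reversal bijections on domain and codomain. Since the underlying sets of $M$ and $M^\star$ agree (both have domain the same subset of $\N$), and likewise for $L$ and $L^\star$, this reindexing is in fact essentially the identity on codes, so $\rho$ is recursive (or at worst continuous) on $\Epi$.

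I would then verify the two defining properties. First, $\Phi_{L^\star}$ is $\Delta^1_1(L^\star)$: it is the composition of the recursive reversal $M\mapsto M^\star$, the $\Delta^1_1(L)$ map $\Phi_L$, and the recursive map $\rho$; since $L$ is $\Delta^1_1(L^\star)$ (again because reversal is recursive, so $L$ and $L^\star$ are recursive in one another), the parameter can be taken to be $L^\star$. Second, the correctness: when $M\strong L^\star$, equivalently $M^\star\strong L$, we have $\Phi_L(M^\star)\in\Epi(M^\star,L)$, and applying $\rho$ yields an element of $\Epi(M,L^\star)$; when $M\not\strong L^\star$ we have $M^\star\not\strong L$, so $\Phi_L(M^\star)=\bot$ and hence $\Phi_{L^\star}(M)=\bot$.

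The only point requiring a little care—and the step I expect to be the mildly technical obstacle—is checking that the reindexing map $\rho$ on the concrete coding space $\Epi=(\N\cup\{*\})^\N\cup\{\bot\}$ really is effective and, in particular, that it genuinely carries epimorphisms $M^\star\to L$ to epimorphisms $M\to L^\star$ at the level of codes (matching the convention that elements outside the domain are sent to $*$). Since the domains of $L$ and $L^\star$ coincide as subsets of $\N$ and reversal only flips the order relation, the surjectivity is preserved verbatim and order-preservation is preserved because reversing both sides flips the inequality twice; so $\rho$ is (essentially) the identity on function codes, which makes it recursive and completes the argument.
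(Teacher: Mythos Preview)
Your proposal is correct and matches the paper's one-line proof $\Phi_{L^{\star}}(M)=\Phi_L(M^{\star})$; you rightly observe that the reindexing $\rho$ is the identity on codes since $L,L^{\star}$ (and $M,M^{\star}$) share underlying sets. One minor slip: $\Phi_L(M^{\star})\in\Epi(M^{\star},L)$ is an epimorphism from $L$ onto $M^{\star}$, not a map $M^{\star}\to L$, but this does not affect your argument.
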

\begin{proof}
Define $\Phi_{L^{\star }}(M)=\Phi_L(M^{\star })$.
\end{proof}

Notice that a $\Delta^1_1$-u\-ni\-form\-i\-za\-tion of
some $K \in \lin$ is a $\mathbf{\Delta}^1_1$ subset of
$\mathcal{U}=\lin\times\Epi$. Following Section 5.1.1 in \cite{louveau}, we
call $(\mathbf{D}^1, \mathbf{W}^1)$ the coding of
$\mathbf{\Delta}^1_1$-subsets of $\mathcal{U}$. Recall that we have
\begin{itemize}
\item $\mathbf{D}^1$ is a $\Pi^1_1$ subset of $ \Bai $,
    $\mathbf{W}^1$ is a $\Delta^1_1$ subset of $\mathbf{D}^1\times\mathcal{U}$
\item $\{\mathbf{W}^1_\alpha\mid \alpha\in\mathbf{D}^1\}$ is the set of
    $\mathbf{\Delta}^1_1$ subsets of $\mathcal{U}$ and for any $K$,
    $\{\mathbf{W}^1_\alpha\mid \alpha\in\mathbf{D}^1\mbox{ and } \alpha
    \mbox{ recursive in }K\}$ is the set of $\Delta^1_1(K)$ subsets of
    $\mathcal{U}$.
\end{itemize}

\begin{fact}\label{fact:uniformuniformization}
There is a partial $\Delta^1_1$ map: $\lin \to \mathbf{D}^1$, $K \mapsto
\alpha_K$ such that for  every $K \in \lin$ admitting a
$\Delta^1_1$-uniformization, $\alpha_K \in \mathbf{D}^1$ is a code of a
$\Delta^1_1$-uniformization $\Phi_K=\mathbf{W}^1_{\alpha_K}$.
\end{fact}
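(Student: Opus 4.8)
The plan is to manufacture a single $\Delta^1_1$ object out of which all the codes $\alpha_K$ can be read off uniformly in $K$. The naive route — directly describing the set of pairs $(K,\alpha)$ such that $\alpha\in\mathbf D^1$ and $\mathbf W^1_\alpha$ is the graph of a $\Delta^1_1$-uniformization of $K$ — will not work, because expressing that $\mathbf W^1_\alpha$ is \emph{total}, as well as the clause ``$L\not\strong K\Rightarrow(L,\bot)\in\mathbf W^1_\alpha$'', needs a quantifier $\forall L$ in front of an existential (resp.\ $\Sigma^1_1$) matrix, landing in $\Pi^1_2$. The device to avoid this is to isolate the \emph{pointwise} correctness condition as a genuinely $\Pi^1_1$ relation and then let the selection theorem supply totality for free. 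I would define $\mathsf R\subseteq\lin\times\lin\times\Epi$ by
\[
\mathsf R(K,L,\sigma)\ \Longleftrightarrow\ \big(\sigma=\bot\wedge L\not\strong K\big)\ \vee\ \sigma\in\Epi(L,K).
\]
Since $\Epi(L,K)\neq\es$ exactly when $L\strong K$, for fixed $(K,L)$ this says precisely that $\sigma$ is a correct output: $\sigma\in\Epi(L,K)$ when $L\strong K$, and $\sigma=\bot$ when $L\not\strong K$. Crucially $\mathsf R$ is $\Pi^1_1$: the relation $\sigma\in\Epi(L,K)$ is (uniformly) $\Pi^0_2$, while $\{\bot\}$ is clopen and $L\not\strong K$ is $\Pi^1_1$, so each disjunct is $\Pi^1_1$.

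Next I would apply the $\Delta^1_1$-uniformization theorem (Fact \ref{uniformization}), in its parameter-free form, with $X=\lin\times\lin$, $Y=\Epi$ and $\mathsf A=\mathsf R$. This produces a $\Delta^1_1$ function $g$ defined on the set $\exists^\Delta\mathsf R$, which is $\Pi^1_1$ by Fact \ref{Spector}, and uniformizing $\mathsf R$ there, where $(K,L)\in\exists^\Delta\mathsf R\Leftrightarrow\exists\sigma\in\Delta^1_1(K,L)\ \mathsf R(K,L,\sigma)$. The point is that $g$ already encodes every uniformization at once. Indeed, suppose $K$ admits a $\Delta^1_1$-uniformization $\Phi$. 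For each $L$ the value $\Phi(L)$ is $\Delta^1_1(K,L)$ and satisfies $\mathsf R(K,L,\Phi(L))$, so $(K,L)\in\exists^\Delta\mathsf R$; hence the section $g(K,\cdot)$ is total. Being the $K$-section of the global $\Delta^1_1$ function $g$ over the now full, hence $\Delta^1_1(K)$, domain $\{K\}\times\lin$, the map $\Phi_K:=g(K,\cdot)$ is $\Delta^1_1(K)$, and the fact that $\mathsf R(K,L,\Phi_K(L))$ holds for all $L$ says exactly that $\Phi_K$ is a $\Delta^1_1$-uniformization of $K$. Thus $g$ furnishes, for every $K$ admitting one, a canonical uniformization $\Phi_K$.

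It then remains to turn the sections $\Phi_K$ into codes depending $\Delta^1_1$-ly on $K$. For each $K$ the graph $G_K=\{(L,\sigma)\in\mathcal U\mid g(K,L)=\sigma\}$ is the $K$-section of the graph of $g$; whenever $g(K,\cdot)$ is total this section is a $\Delta^1_1(K)$ subset of $\mathcal U$, with a $\Pi^1_1$- and a $\Sigma^1_1$-code obtainable recursively in $K$. By the good parametrization properties of the system $(\mathbf D^1,\mathbf W^1)$ (\cite[Section 5.1.1]{louveau}) these assemble into a partial $\Delta^1_1$ map $K\mapsto\alpha_K$ with the property that $\alpha_K\in\mathbf D^1$ precisely when $G_K$ is genuinely $\Delta^1_1(K)$, and then $\mathbf W^1_{\alpha_K}=G_K$. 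Its domain $\{K\mid\alpha_K\in\mathbf D^1\}$ is $\Pi^1_1$ and, by the previous paragraph, contains every $K$ admitting a $\Delta^1_1$-uniformization; for such $K$ one gets $\mathbf W^1_{\alpha_K}=G_K=\operatorname{graph}(\Phi_K)$, so $\alpha_K$ codes the uniformization $\Phi_K$, as required. The main difficulty is exactly the one addressed at the first step: totality and the $\bot$-clause cannot be verified inside $\Pi^1_1$, and the whole trick is to push them out of the definability count — encoding only the $\Pi^1_1$ pointwise-correctness relation $\mathsf R$, and then recovering totality afterwards, on the relevant $K$, as a consequence of the hypothesis that a uniformization exists.
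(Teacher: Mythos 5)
Your construction can be made to work, but it is a genuinely different route from the paper's, and your opening claim that the direct approach ``will not work'' is mistaken: the direct approach is precisely the paper's proof. The paper defines the relation $\mathsf{A}(K,\alpha)$ expressing ``$\alpha$ codes a function $\Phi$ and $\forall L\,\big((L\not\strong K\land\Phi(L)=\bot)\lor\Phi(L)\in\Epi(L,K)\big)$'', checks that it is $\Pi^1_1$, and applies Fact~\ref{uniformization} \emph{once}, with $X=\lin$ and $Y=\mathbf{D}^1$ (this is exactly why that fact is stated for $Y$ a $\Pi^1_1$ subset of a recursive space); every $K$ admitting a uniformization lies in $\exists^\Delta\mathsf{A}$ because a code for a $\Delta^1_1(K)$ set can be taken recursive in $K$. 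The quantifier count you object to is not a real obstruction: once functionality of $\mathbf{W}^1_\alpha$ is asserted (a $\Pi^1_1$ statement, using the $\Sigma^1_1$-side of the coding negatively), each nonempty section $\{\sigma\mid(L,\sigma)\in\mathbf{W}^1_\alpha\}$ is a $\Delta^1_1(\alpha,L)$ singleton, so its unique element is a $\Delta^1_1(\alpha,L)$ point; hence totality can be written as $\forall L\,\exists\sigma\in\Delta^1_1(\alpha,L)\,\mathbf{W}^1_\alpha(L,\sigma)$, which is $\Pi^1_1$ by Fact~\ref{Spector}, and ``$\Phi(L)\in\Epi(L,K)$'' can then be expressed with a universal quantifier over $\sigma$. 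So totality and the $\bot$-clause stay $\Pi^1_1$, not $\Pi^1_2$.

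Your alternative --- uniformize the pointwise relation $\mathsf{R}(K,L,\sigma)$ and then curry $g$ into codes --- is sound in outline, and your verification that every uniformizable $K$ has full section in $\exists^\Delta\mathsf{R}$ is correct. But the currying step, which the paper's route avoids entirely, is exactly where you are loose, and ironically it hides the same phenomenon you objected to at the start: for $K$ with total section, the graph $G_K$ of $g(K,\cdot)$ is visibly $\Pi^1_1(K)$, while its complement, computed naively, is an existential quantifier over a $\Pi^1_1$ matrix, i.e.\ a priori $\Sigma^1_2$; a total function with $\Pi^1_1$ graph need not even be $\Delta^1_1$ (constant functions with value a non-hyperarithmetic $\Pi^1_1$ singleton). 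To obtain the $\Sigma^1_1(K)$-side code, and hence $\alpha_K\in\mathbf{D}^1$, you must use that the values $g(K,L)$ are $\Delta^1_1(K,L)$ points (part of what Fact~\ref{uniformization} delivers) and invoke Fact~\ref{Spector} once more: $(L,\sigma)\notin G_K\Leftrightarrow\exists\sigma'\in\Delta^1_1(K,L)\,\big(G(K,L,\sigma')\land\sigma'\neq\sigma\big)$ is $\Pi^1_1$. With that supplied, relativizing the two lightface definitions at the parameter $K$ is a recursive operation on codes and your map $K\mapsto\alpha_K$ behaves as claimed. In sum: both proofs rest on the same two facts, but the paper's single application of Fact~\ref{uniformization} at the level of codes concentrates all the work in checking one $\Pi^1_1$ definition, whereas your two-stage version buys a more concrete intermediate object ($g$) at the price of a coding-uniformity argument that you needed to spell out.
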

\begin{proof}
The relation
\[
\alpha\mbox{ is the code of a function that uniformizes } K
\]
is a $\Pi^1_1$ subset of $\mathbf{D}^1\times\lin$ by definition:
\begin{multline*}
\alpha\mbox{ codes a uniformization of } K
\Lglrao\alpha\mbox{ codes a function }\Phi\mbox{ and}\\
\forall L\in\lin\,\big( (L\not\strong K \land \Phi(L) = \bot) \lor\Phi(L)\in\Epi(L,K)\big).
\end{multline*}
The result then follows using Fact \ref{uniformization}.
\end{proof}

\begin{proposition}\label{prop:unifclosedfinitesums}
If $\mathsf{A}, \mathsf{B} \subseteq
\scat$ have the $\Delta^1_1$-u\-ni\-form\-i\-za\-tion
for epimorphisms, then so does $\mathsf{A}+\mathsf{B}$.

In particular, $\FinSum(\mathsf{A})$ has the
$\Delta^1_1$-u\-ni\-form\-i\-za\-tion for epimorphisms.
\end{proposition}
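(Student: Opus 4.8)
The plan is to reduce to the two-summand case and then bootstrap to $\FinSum(\mathsf{A})$, building the uniformization pointwise. First I would fix $K\in\mathsf{A}+\mathsf{B}$ together with a witnessing decomposition $K=K_0+K_1$, where $K_0\in\mathsf{A}$ is an initial segment and $K_1=K\setminus K_0\in\mathsf{B}$. Since $\mathsf{A},\mathsf{B}\subseteq\scat$ and finite sums of scattered orders are scattered, $K\in\scat$, so by Fact \ref{fact:intervalsofscat} the convex suborders $K_0$ and $K_1$ are $\Delta^1_1(K)$. As $\mathsf{A}$ and $\mathsf{B}$ have the $\Delta^1_1$-uniformization for epimorphisms, fix uniformizations $\Phi_{K_0}$ and $\Phi_{K_1}$; they are $\Delta^1_1(K_0)$ and $\Delta^1_1(K_1)$ respectively, and since $K_0,K_1\in\Delta^1_1(K)$ and hyperarithmetic reducibility is transitive, both are in fact $\Delta^1_1(K)$.

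Next I would reduce the construction of an epimorphism $K\to L$ to the search for a compatible splitting of $L$. The key observation is that $K_0$ and $K_1$ are adjacent and not connected, so in Lemma \ref{definitionbypieces} applied to the nice family $(K_0,K_1)$ both compatibility clauses are vacuous: any nice covering $(L_0,L_1)$ of $L$ with $L_0\strong K_0$ and $L_1\strong K_1$ already yields $L\strong K$. Accordingly I set
\begin{multline*}
\mathsf{R}(L,(L_0,L_1))\equiv L_0\text{ is initial in }L\wedge L_1\text{ is final in }L\wedge L=L_0\cup L_1\\
\wedge\,|L_0\cap L_1|\le1\wedge\Phi_{K_0}(L_0)\ne\bot\wedge\Phi_{K_1}(L_1)\ne\bot.
\end{multline*}
Because $\Phi_{K_0},\Phi_{K_1}$ are $\Delta^1_1(K)$ and the covering conditions are arithmetic, $\mathsf{R}$ is $\Delta^1_1(K)$, hence $\Pi^1_1(K)$; and the observation above shows that if a section $\mathsf{R}_L$ is non-empty then $L\strong K$.

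The delicate point, and the main obstacle, is the converse: whenever $L\strong K$ I must produce a witness for $\mathsf{R}$ that is moreover $\Delta^1_1(L,K)$, so that Fact \ref{uniformization} can select it. Given an epimorphism $\sigma\colon K\to L$, its restrictions show that $(\sigma[K_0],\sigma[K_1])$ is a nice covering with $\sigma[K_0]\strong K_0$, $\sigma[K_1]\strong K_1$ and $|\sigma[K_0]\cap\sigma[K_1]|\le1$, so $\mathsf{R}_L\ne\es$. As $\mathsf{R}_L$ is then a non-empty $\Sigma^1_1(L,K)$ set, the effective perfect set theorem (\cite[Theorem 4F.1]{moschovakis}) guarantees a $\Delta^1_1(L,K)$ member: if $\mathsf{R}_L$ has no perfect subset it is contained in the $\Delta^1_1(L,K)$ reals, and otherwise it has a $\Delta^1_1(L,K)$-coded perfect subset, which contains $\Delta^1_1(L,K)$ points. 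Hence $\exists^\Delta\mathsf{R}=\{L\mid L\strong K\}$, which is $\Pi^1_1(K)$ by Fact \ref{Spector} and $\Sigma^1_1(K)$ because $\strong$ is witnessed by an epimorphism, so it is $\Delta^1_1(K)$. Applying Fact \ref{uniformization} to $\mathsf{R}$ gives a $\Delta^1_1(K)$ partial map $L\mapsto(L_0,L_1)$ on this set; composing with $\Phi_{K_0},\Phi_{K_1}$ and feeding the result to $\DefPieces$ (which is $\Delta^1_1$ by Fact \ref{fact:defpiecesiseffective}) returns $\sigma\in\Epi(L,K)$, and setting $\Phi_K(L)=\bot$ off the domain yields the desired $\Delta^1_1(K)$-uniformization.

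Finally, for $\FinSum(\mathsf{A})$ I would argue by induction that each $(n+1)\cdot\mathsf{A}=n\cdot\mathsf{A}+\mathsf{A}$ is a subset of $\scat$, finite sums of scattered orders being scattered, and has the $\Delta^1_1$-uniformization for epimorphisms by the sum case just proved. Since admitting a $\Delta^1_1$-uniformization is a property of individual orders and every member of $\FinSum(\mathsf{A})$ lies in some $(n+1)\cdot\mathsf{A}$, the union $\FinSum(\mathsf{A})$ inherits the property.
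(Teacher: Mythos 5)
Your overall architecture matches the paper's: the same decomposition $K=K_0+K_1$, the same reduction via Lemma \ref{definitionbypieces} (both compatibility clauses being vacuous since $K_0$ and $K_1$ are adjacent and not connected), and the same final gluing through $\DefPieces$. The paper differs only in the selection step, where it invokes the effective Lusin--Novikov ``small section'' uniformization \cite[4F.6]{moschovakis} rather than Fact \ref{uniformization}. But your version of the selection step contains a genuine error: the second horn of the dichotomy you extract from the effective perfect set theorem is false. If $\mathsf{R}_L$ has a perfect subset, there is no reason for that perfect subset --- however simply coded --- to contain a $\Delta^1_1(L,K)$ point: non-empty $\Delta^1_1$-coded closed sets, even perfect ones, can consist entirely of non-hyperarithmetic reals (for instance, recursive trees all of whose branches compute Kleene's $\mathcal{O}$; even leftmost branches need not be hyperarithmetic). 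The effective perfect set theorem \cite[Theorem 4F.1]{moschovakis} produces $\Delta^1_1$ points only in the \emph{thin} case; having a perfect subset is precisely the situation in which it gives you nothing. As written, you have therefore not shown that $\mathsf{R}_L\neq\es$ implies the existence of a witness in $\Delta^1_1(L,K)$, so the identification $\exists^\Delta\mathsf{R}=\{L\mid L\strong K\}$, on which everything afterwards rests, is unproved.

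The gap is fixable, and the missing observation is exactly the one the paper needs (silently) to justify the countable-section hypothesis of Lusin--Novikov: $\mathsf{R}_L$ is always countable, so the bad horn of your dichotomy never occurs. Indeed, if $(L_0,L_1)\in\mathsf{R}_L$ then $\Phi_{K_0}(L_0)\neq\bot$ means $L_0\strong K_0$, hence $L_0\inj K_0$, so $L_0$ is scattered, and similarly $L_1$; thus $L=L_0\cup L_1$ is scattered. A countable scattered order has only countably many convex subsets (the fact from \cite[Exercise 5.33.1]{Rosens1982} already used in the proof of Fact \ref{fact:intervalsofscat}), so $L$ admits only countably many splittings into an initial and a final convex piece, and $\mathsf{R}_L$ is countable. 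A countable $\Sigma^1_1(L,K)$ set has no perfect subset, so the effective perfect set theorem now applies in its useful direction: \emph{every} element of $\mathsf{R}_L$ is $\Delta^1_1(L,K)$. With this inserted, your appeal to Facts \ref{Spector} and \ref{uniformization} goes through, and the remainder of your argument, including the induction for $\FinSum(\mathsf{A})$, is correct; the result is a legitimate alternative to the paper's route, exploiting the countability of the sections through Fact \ref{uniformization} instead of through \cite[4F.6]{moschovakis}.
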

\begin{proof} We fix $K \in
\mathsf{A}+\mathsf{B}$ and $K_0 \in \mathsf{A}$, $K_1 \in \mathsf{B}$ such
that $K_0$ is an initial segment of $K$ and $K_1= K \setminus K_0$. Since
$\mathsf{A}$ and $\mathsf{B}$ have the $\Delta^1_1$-u\-ni\-form\-i\-za\-tion
for epimorphisms, for $i = 0,1$ there exists a
$\Delta^1_1$-u\-ni\-form\-i\-za\-tion $\Phi_{K_i}$ for $K_i$. As $K_i$ is a
convex subset of $K \in \scat$, by Fact \ref{fact:intervalsofscat}
$\Delta^1_1(K_i)$ is included in $\Delta^1_1(K)$. Recalling that each
$\Phi_{K_i}$ is a $\Delta^1_1(K_i)$ map, this implies that $\Phi_{K_i}$ is a
$\Delta^1_1(K)$ map.

By Lemma \ref{definitionbypieces} (and using the fact that $K_0$ and $K_1$
are adjacent but not connected), $L$ satisfies $L\strong K$ if and only if
there is a nice covering $(L_0,L_1)$ of $L$ satisfying $L_0 \strong K_0$ and
$L_1 \strong K_1$. Since $\Phi_{K_i}$ allows to check whether $L_i \strong
K_i$ in a $\Delta^1_1(K)$ way and using Fact \ref{fact:effective}(2), the set
$\mathsf{C}$ of triples $(L,L_0,L_1)$ satisfying the latter is
$\Delta^1_1(K)$.

We now use the effective version of Lusin-Novikov's \lq\lq small
section\rq\rq\ uniformization result (see \cite{moschovakis}, 4F.6: the
statement there is not effective, but the hint proves the effective version)
to obtain two $\Delta^1_1(K)$ functions $\Psi_0$ and $\Psi_1$ with domain
$\{L \mid \exists L_0,L_1\, (L,L_0,L_1) \in \mathsf{C}\}$ such that $(L,
\Psi_0(L), \Psi_1(L))\in \mathsf{C}$ holds for any $L \strong K$.

We can now define $\Phi_K: \lin \to \Epi$ by setting $\Phi_K(L)$ to be the
map defined by pieces from $\Phi_{K_0} \circ \Psi_0(L)$ and $\Phi_{K_1} \circ
\Psi_1(L)$ when $\Psi_0(L)$ and $\Psi_1(L)$ are defined, and $\Phi_K(L) =
\bot$ otherwise. By Fact \ref {fact:defpiecesiseffective}, $\Phi_K$ is a
$\Delta^1_1(K)$ map and in fact a $\Delta^1_1$-u\-ni\-form\-i\-za\-tion of
$K$.
\end{proof}

We recall (some version of) Hausdorff's hierarchy of
countable scattered linear orders.
\begin{itemize}
\item Call $\scat_0\subseteq \lin$ the class of singleton
    orders,
\item an element of $ \lin $ is in $\scat_\alpha$ when it is isomorphic to
    a  finite sum, an $\omega$-sum or an $\omega^{\star }$-sum of elements
    of $\bigcup_{\beta<\alpha}\scat_\beta$.
\end{itemize}
Hausdorff proved that $\scat=\bigcup_{\alpha<\omega_1}\scat_\alpha$, so for
$K\in\scat$ we define the \emph{Hausdorff rank} of $K$:
\[
\Hrk(K)=\min\{\alpha<\omega_1\mid K\in\scat_\alpha\}.
\]
We have $\scat_\alpha = \{K \in \scat \mid \Hrk(K) \leq
\alpha\}$ and we can set $\scat_{<\alpha}= \bigcup_{\beta<\alpha} \scat_\beta
= \{K \in \scat \mid \Hrk(K) < \alpha\}$.

Recall that if $L$ is a suborder of $K$, then $\Hrk(L)\leq\Hrk(K)$ holds, and
that $\Hrk$ is a $\Pi^1_1$-norm (see \cite{moschovakis}, Section 4B). In
particular, $\scat_\alpha$ and $\scat_{<\alpha}$ are
$\Delta^1_1(\alpha)$. Moreover, for any $K \in \scat$, using Theorem
4D.1(iii) of \cite{moschovakis} we have $\Hrk(K) \in \Delta^1_1(K)$.\medskip

To prove that if $\scat_\alpha$ has the
$\Delta^1_1$-u\-ni\-form\-i\-za\-tion for epimorphism so does
$\scat_{\alpha+1}$ we only need to handle the case of $\omega$-sums (as for
the case of $\omega^{\star}$-sums we can use Fact
\ref{fact:unifstarclosed}, and the case of finite sums is handled by
Proposition \ref{prop:unifclosedfinitesums}). We use the following
notion, implicitly used in \cite{Landra1979}.

\begin{definition}
We say that an order $K$ is \emph{stable} if and only if for all $k\in K$ we
have $K\equiv_s[k,{\rao})_K$.
\end{definition}

Since it is always the case that $[k,{\rao})_K \strong K$, $K$ stable really
means $K \strong [k,{\rao})_K$ for all $k \in K$.
Notice that in particular a stable $K$ has a minimum and that the only stable
orders with a maximum are the singletons.

Recall from \cite{Landra1979} and \cite{cacama} that the class of countable linear
orders is well-quasi-ordered (wqo) under epimorphisms. In particular, $(\lin,
{\strong})$ is well-founded. We use the following observation, due to
Landraitis \cite{Landra1979}, Lemma 2.2.

\begin{lemma}\label{lem:landraitistrick}
Every $K\in\lin$ has a stable final segment.
\end{lemma}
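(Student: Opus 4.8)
The plan is to use the well-foundedness of the epimorphism quasi-order on countable linear orders together with a maximality argument. Since $(\lin, {\strong})$ is well-founded (as the class is wqo under epimorphisms, recalled just above), I would look among the final segments of $K$ for one that is $\strong$-minimal, and then show that minimality forces stability.

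More precisely, for $K \in \lin$ consider the collection $\mathcal{F}$ of $\equiv_s$-classes of final segments $[k,{\rao})_K$ as $k$ ranges over $K$. First I would observe that this collection is $\strong$-directed downward along $K$: if $k \leq_K k'$ then $[k',{\rao})_K$ is a final segment of $[k,{\rao})_K$, hence $[k',{\rao})_K \strong [k,{\rao})_K$. Because $\strong$ is well-founded, the set $\{[k,{\rao})_K \mid k \in K\}$ (taken up to $\equiv_s$) contains a $\strong$-minimal element; say it is realized by $[k_0,{\rao})_K$ for some $k_0 \in K$. Here I would invoke well-foundedness to extract a minimal element of this family, which is exactly the step that uses the main wqo theorem of \cite{Landra1979, cacama}.

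I then claim that $F = [k_0,{\rao})_K$ is stable, which finishes the proof since $F$ is a final segment of $K$. To see this, take any $k \in F$, so $k \geq_K k_0$; I must show $F \equiv_s [k,{\rao})_F = [k,{\rao})_K$. One inequality is automatic: $[k,{\rao})_K$ is a final segment of $F$, so $[k,{\rao})_K \strong F$. For the reverse, note that $[k,{\rao})_K$ is itself a final segment of $K$ of the form $[k,{\rao})_K$ with $k \in K$, so by the minimality of $[k_0,{\rao})_K$ in the family we cannot have $[k,{\rao})_K \strong [k_0,{\rao})_K = F$ holding strictly; combined with $[k,{\rao})_K \strong F$ this forces $[k,{\rao})_K \equiv_s F$, and in particular $F \strong [k,{\rao})_K$. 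Since this holds for every $k \in F$, the segment $F$ is stable.

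The main obstacle I anticipate is making the minimality extraction airtight: I want a genuinely $\strong$-minimal final segment, and I should be careful that minimality is taken with respect to the $\strong$-preorder restricted to this particular family of final segments, rather than over all of $\lin$. Well-foundedness of $\strong$ guarantees that any non-empty subcollection of $\lin$ has a minimal element, so applying it to $\{[k,{\rao})_K \mid k \in K\}$ is legitimate; the only subtlety is that ``minimal'' here means no strictly smaller element in that family, which is precisely what the argument above exploits. I would double-check that the chosen $F$ really is a final segment of $K$ (it is, being $[k_0,{\rao})_K$) so that the conclusion of the lemma is literally met.
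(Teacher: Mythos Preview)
Your argument is correct and follows essentially the same route as the paper's: both use the well-foundedness of $\strong$ on $\lin$ to locate a final segment that is $\strong$-minimal among final segments, and then observe that minimality forces $F \equiv_s [k,{\rao})_K$ for every $k \in F$. The only cosmetic difference is that the paper realizes the minimal element concretely by fixing a monotone cofinal sequence $(a_n)_{n\in\N}$ in $K$ and using well-foundedness to find $N$ where the $\strong$-decreasing chain $([a_n,{\rao})_K)_n$ stabilizes, whereas you invoke well-foundedness abstractly on the whole family $\{[k,{\rao})_K \mid k\in K\}$.
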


\begin{proof}
Fix $K\in\lin$ and look for $a\in K$ such that $[a,{\rao})_K$ is
stable.
Fix a sequence $(a_n)_{n\in\N}$ monotone and cofinal in $K$. The sequence
$([a_n,{\rao})_K)_{n\in\N}$ is $\strong $-decreasing so there is $N\in\N$
such that $([a_n,{\rao})_K)_{n\geq N}$ is $\equiv_s$-constant, for $\strong $
is well-founded on $\lin$. Choose then $a=a_N$. For
any $k\geq_Ka$ there is $n>N$ such that $k\leq_Ka_n$ holds, and finally
\[
{[a,{\rao})_K} \strong {[a_n,{\rao})_K} \strong [k,{\rao})_K,
\]
so $[a,{\rao})_K$ is a stable final segment of $K$.
\end{proof}

We need the following characterization of
stability (also essentially contained in \cite{Landra1979}, Lemma 2.2).

\begin{fact}\label{stablesimplerexpression}
An admissible linear order $K$ is stable if and only if it has a minimum and
for all $a_0\leq_K a_1$ and $a$ in $K$ there are $b_0,b_1$ in $K$ such that
$a\leq_Kb_0\leq_Kb_1$ and $[a_0,a_1]_K \strong [b_0,b_1]_K$.
\end{fact}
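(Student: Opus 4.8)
The plan is to prove the two implications separately, the reverse one carrying essentially all the work. For the forward implication, suppose $K$ is stable; then $K$ has a minimum, so the first clause holds and I only need to produce $b_0,b_1$. Given $a_0\leq_K a_1$ and $a\in K$, stability provides an epimorphism $\sigma\in\Epi(K,[a,{\rao})_K)$. Since $\sigma$ is an order-preserving surjection I would choose $b_0,b_1\in[a,{\rao})_K$ with $\sigma(b_0)=a_0$, $\sigma(b_1)=a_1$; as the fibres of $\sigma$ are convex and ordered in agreement with their images, these can be taken with $b_0\leq_K b_1$. A direct check then shows that $\sigma$ restricted to $[b_0,b_1]_K$ still surjects onto $[a_0,a_1]_K$ (any point outside $[b_0,b_1]_K$ mapping into $[a_0,a_1]_K$ is already matched by $b_0$ or $b_1$), giving $[a_0,a_1]_K\strong[b_0,b_1]_K$ with $a\leq_K b_0\leq_K b_1$.

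For the reverse implication, assume $K$ is admissible, has minimum $m$, and satisfies the stated condition; I fix $k\in K$ and build an epimorphism from $[k,{\rao})_K$ onto $K$, the reverse relation $[k,{\rao})_K\strong K$ being automatic. I split on whether $K$ has a maximum. If $K$ has a maximum $M$, applying the hypothesis to $a_0=m$, $a_1=M$, $a=k$ yields $K=[m,M]_K\strong[b_0,b_1]_K$ for some $k\leq_K b_0\leq_K b_1$, and I extend the witnessing epimorphism to all of $[k,{\rao})_K$ by sending $[k,b_0)_K$ to $m$ and $(b_1,{\rao})_K$ to $M$; this stays order-preserving and surjective because $K$ has both extrema.

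If $K$ has no maximum, admissibility forces cofinality $\omega$, so I fix an increasing cofinal sequence $m=d_0<_K d_1<_K\cdots$, making $([d_i,d_{i+1}]_K)_{i\in\N}$ a nice connected covering of $K$. The core step is to produce a nice \emph{family} $(\mathcal{K}_i)_{i\in\N}$ of intervals of $[k,{\rao})_K$ with $[d_i,d_{i+1}]_K\strong\mathcal{K}_i$ for every $i$, and then to apply Lemma \ref{definitionbypieces}. I would define the $\mathcal{K}_i$ recursively: at stage $i$ apply the hypothesis to $[d_i,d_{i+1}]_K$ with the auxiliary point $a$ chosen above the previous piece and above the $i$-th term of a fixed cofinal sequence of $[k,{\rao})_K$, obtaining an interval lying strictly above all earlier pieces. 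I let $\mathcal{K}_0$ begin exactly at $k$ (extending its epimorphism downward by sending the new bottom part to $d_0$) to secure coinitiality, and I push the upper endpoints off to infinity (extending epimorphisms upward by sending the new top part to $d_{i+1}$) to secure cofinality. Since each $[d_i,d_{i+1}]_K$ has a maximum and consecutive target pieces are connected, the compatibility clauses of Lemma \ref{definitionbypieces} are met whatever the gaps between the $\mathcal{K}_i$, so the lemma delivers $K\strong[k,{\rao})_K$; as $k$ was arbitrary, $K$ is stable.

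The main obstacle is the bookkeeping in this last construction. The hypothesis produces intervals $[b_0,b_1]_K$ with $b_0\geq_K a$ but gives no control over $b_0$, so the pieces $\mathcal{K}_i$ need not tile $[k,{\rao})_K$. The point that makes the argument work is that Lemma \ref{definitionbypieces} demands only a nice family, not a covering, on the domain side, so the uncontrolled gaps are harmless; the two elementary endpoint-extension moves (down to $d_i$, up to $d_{i+1}$), valid because every target piece has both a minimum and a maximum, then let me impose coinitiality and cofinality without destroying surjectivity.
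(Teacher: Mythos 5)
Your proposal is correct and takes essentially the same route as the paper's proof: in the nontrivial direction, the paper also fixes a cofinal sequence starting at $\min K$, uses the hypothesis recursively to produce a nice family of intervals $[b^0_i,b^1_i]_K$ inside $[k,{\rao})_K$ matching the connected covering of $K$ by consecutive intervals of the sequence, anchors the first piece at $k$ by the same downward-extension trick, and concludes via Lemma \ref{definitionbypieces}. Your explicit case split on whether $K$ has a maximum is a detail the paper glosses over (in that case the hypothesis actually forces $K$ to be a singleton), but it does not change the substance of the argument.
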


\begin{proof}
It is immediate that if $K$ is stable then it has the desired property, so it
is enough to show the converse. Given any $k\in K$, take $(a_i)_{i\in \N }$ a
cofinal monotone sequence in $K$, with $a_0=\min K$ and $a_1=k$. Use the
hypothesis to find a sequence $(b^0_i, b^1_i)_{i\in \N }$ such that we have
\begin{itemize}
\item $a_{i+1}\leq_Kb^0_i\leq_Kb^1_i\leq_Kb^0_{i+1}$,
\item $[a_i,a_{i+1}]_K\strong [b^0_i,b^1_i]_K$.
\end{itemize}
Notice that since $[b^0_0,
b^1_0]_K \strong [k, b^1_0]_K$ we can assume that $b^0_0=k$.

We can apply Lemma \ref{definitionbypieces} to define by pieces a surjection
showing that $K \strong [k,{\rao})_K$.
\end{proof}

\begin{theorem}\label{theorem:boreluniformization}
$\scat$ has the $\Delta^1_1$-u\-ni\-form\-i\-za\-tion for epimorphisms.
\end{theorem}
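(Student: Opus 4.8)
The plan is to prove that every $K \in \scat$ admits a $\Delta^1_1$-uniformization by induction on the Hausdorff rank $\Hrk(K)$, making the construction uniform in $K$ so that it transfers across ranks. Concretely, I would prove by transfinite induction on $\alpha < \omega_1$ that $\scat_\alpha$ has the $\Delta^1_1$-uniformization for epimorphisms. The base case $\scat_0$ (singletons) is trivial, since for a singleton $K$ the epimorphism from any $L$ is the constant map, and checking $L \strong K$ (always true) together with outputting this map is manifestly $\Delta^1_1(K)$. For the successor step, by the earlier reductions I only need to treat $\omega$-sums: finite sums are handled by Proposition \ref{prop:unifclosedfinitesums}, and $\omega^\star$-sums reduce to $\omega$-sums via Fact \ref{fact:unifstarclosed} applied to the reverse order. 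The limit step is where I expect the genuine subtlety to lie.

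For the successor step proper, I would take $K \in \scat_{\alpha+1}$ of the form $K = \sum_{n \in \N} K_n$ with each $K_n \in \scat_{<\alpha+1} = \scat_\alpha$, presented so that the convex pieces $K_n$ are $\Delta^1_1(K)$ by Fact \ref{fact:intervalsofscat}. The key structural fact is Lemma \ref{lem:landraitistrick}: $K$ has a stable final segment, and by stability together with the characterization in Fact \ref{stablesimplerexpression}, an epimorphism $L \strong K$ can be assembled by pieces from epimorphisms onto intervals $[a_i, a_{i+1}]_K$ of lower rank. The idea is that $L \strong K$ holds precisely when there is a nice covering $(L_i)_i$ of $L$ with each $L_i \strong K_i$, and since each $K_i$ has Hausdorff rank $\leq \alpha$, the induction hypothesis supplies a $\Delta^1_1$-uniformization $\Phi_{K_i}$, uniformly in $i$ via Fact \ref{fact:uniformuniformization}. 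I would then use the effective Lusin–Novikov uniformization (as in the proof of Proposition \ref{prop:unifclosedfinitesums}) to select the covering $(L_i)_i$ in a $\Delta^1_1(K)$ way, and finally invoke $\DefPieces$ (Fact \ref{fact:defpiecesiseffective}) to glue the $\Phi_{K_i} \circ \Psi_i(L)$ into a single epimorphism, returning $\bot$ when no covering exists.

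The main obstacle will be making the induction genuinely uniform across all countable ranks, so that the construction at rank $\alpha+1$ can invoke the rank-$\alpha$ uniformizations \emph{effectively} rather than merely for each fixed $K$. The tool here is Fact \ref{fact:uniformuniformization}, which produces codes $\alpha_K \in \mathbf{D}^1$ of uniformizations by a single partial $\Delta^1_1$ map; I would want to thread these codes through the recursion so that the map $K \mapsto \alpha_K$ is itself $\Delta^1_1$. This is delicate because the recursion runs along a $\Pi^1_1$-rank: I would exploit that $\Hrk$ is a $\Pi^1_1$-norm and that $\Hrk(K) \in \Delta^1_1(K)$ for $K \in \scat$, together with a $\Delta^1_1$-recursion principle along the prewellordering given by $\Hrk$ (effective transfinite recursion, using the coding $(\mathbf{D}^1, \mathbf{W}^1)$), to define the assignment $K \mapsto \alpha_K$ simultaneously for all $K \in \scat$. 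The payoff is that $\Delta^1_1(K_i) \subseteq \Delta^1_1(K)$ for convex $K_i$ (Fact \ref{fact:intervalsofscat}) keeps every parameter controlled by $K$ itself, so the final uniformization $\Phi_K$ is $\Delta^1_1(K)$ as required.

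The limit case $\alpha = \lambda$ limit is then subsumed: since $\scat_{<\lambda} = \bigcup_{\beta<\lambda} \scat_\beta$ and every $K \in \scat_\lambda$ decomposes as an $\omega$-, $\omega^\star$-, or finite sum of pieces of strictly smaller rank, the same construction applies verbatim, provided the recursion has been set up to reference all lower-rank codes uniformly. Once the uniform assignment $K \mapsto \alpha_K$ is shown to be $\Delta^1_1$ on all of $\scat$ by this effective transfinite recursion, the theorem follows by reading off $\Phi_K = \mathbf{W}^1_{\alpha_K}$.
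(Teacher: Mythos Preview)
Your scaffolding matches the paper's: induction on Hausdorff rank, with finite sums and $\omega^\star$-sums handled by Proposition~\ref{prop:unifclosedfinitesums} and Fact~\ref{fact:unifstarclosed}. There is however a genuine gap in the $\omega$-sum step. You characterize $L \strong K$ (for $K = \sum_{n\in\N} K_n$) by the existence of a nice covering $(L_i)_{i\in\N}$ of $L$ with $L_i \strong K_i$, and then propose to \emph{select} such a covering via effective Lusin--Novikov, as in Proposition~\ref{prop:unifclosedfinitesums}. That proposition succeeds because for a two-piece covering the section over any relevant $L$ is countable (such $L$ must be scattered, hence has countably many convex subsets). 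For $\N$-indexed coverings this fails: the space of candidate coverings is a priori of size continuum, so no small-section hypothesis is available and a one-shot $\Delta^1_1(K)$ selection is not justified. You invoke stability and Fact~\ref{stablesimplerexpression} by name but never cash them in; the selection method you actually describe bypasses them entirely.

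The paper uses stability constructively, and this is the missing idea. After reducing via Lemma~\ref{lem:landraitistrick} to $K$ stable, one does not select a full covering of $L$; instead one fixes the canonical cofinal sequence $(l_i)$ in $L$ and builds a matching cofinal sequence $(k_i^L)$ in $K$ \emph{recursively in $i$}: at stage $i$ one searches for the least $k$ with $[l_i,l_{i+1}]_L \strong [k_0,k]_K$ (an effective search using the lower-rank uniformizations $\Phi_{[k,k']_K}$), and then stability via Fact~\ref{stablesimplerexpression} transports this interval past $k_i^L$, yielding $k_{i+1}^L$. Stability is exactly what guarantees that each greedy step extends; without it an early choice could block the rest, which is why an existential selection principle alone cannot work. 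Two minor points: in the base case, $L \strong K$ for $K$ a singleton holds only when $L$ is itself a singleton, so $\Phi_K$ must return $\bot$ otherwise; and your concern about a global effective transfinite recursion along $\Hrk$ is more than required---the theorem only asks for a $\Delta^1_1(K)$ map for each fixed $K$, and at each rank a single application of Fact~\ref{fact:uniformuniformization} supplies uniform codes for the intervals $[k,k']_K \in \FinSum(\scat_{<\alpha})$, which suffices since these intervals are $\Delta^0_1(K)$.
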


\begin{proof}
We prove inductively on $\alpha$ that $\scat_\alpha$ has the
$\Delta^1_1$-u\-ni\-form\-i\-za\-tion for epimorphisms. Take $K\in\scat_0$,
so that there is $n\in\N$ such that $K=\{n\}$. For $m\in \N $ define
$\tau_{n,m}: \N \rao \N \cup\{ *\} $ by letting $\tau_{n,m}(n)=m$, and
$\tau_{n,m}(i)=*$ if $i\neq n$; then
\begin{align*}
\Phi_K:\lin&\lgrao \Epi \\
L&\lgto\begin{cases}
\tau_{n,m}&\mbox{if }L=\{m\} \mbox{ for some } m \\
\bot&\mbox{otherwise}
\end{cases}
\end{align*}
is a $\Delta^1_1$-u\-ni\-form\-i\-za\-tion\footnote{Notice that this $\Phi_K$
is not continuous!} of $K$.\medskip

Fix now $\alpha<\omega_1$ with $\alpha \geq 1$, and suppose that
$\scat_{<\alpha}= \bigcup_{\beta<\alpha} \scat_\beta$ has the
$\Delta^1_1$-u\-ni\-form\-i\-za\-tion for epimorphisms. We need to prove that
$\scat_\alpha$ has it too.

By Proposition
\ref{prop:unifclosedfinitesums}, $\FinSum(\scat_{<\alpha})$ has the
$\Delta^1_1$-u\-ni\-form\-i\-za\-tion for epimorphisms. Recall also that, by Fact
\ref{fact:uniformuniformization}, there is a
$\Delta^1_1$ map that for any $L \in
\FinSum(\scat_{<\alpha})$ chooses the code $\alpha_L$ of a
$\Delta^1_1$-u\-ni\-form\-i\-za\-tion $\Phi_L$ for $L$.

Calling $\mathsf{A}$
the set of stable elements of $\scat_{\alpha}$,
Lemma
\ref{lem:landraitistrick} yields that $\scat_\alpha$ can be defined as
\[
\FinSum(\scat_{<\alpha})\cup\big(\FinSum(\scat_{<\alpha})+\mathsf{A}\big)\cup\star\big(\FinSum(\scat_{<\alpha})+\mathsf{A}\big),
\]
where we are using obvious notations.

Fact \ref{fact:unifstarclosed} and Proposition
\ref{prop:unifclosedfinitesums} tell us that if $\mathsf{A}$ has the
$\Delta^1_1$-u\-ni\-form\-i\-za\-tion for epimorphisms, then so does
$\scat_{\alpha}$.

Fix $K \in \mathsf{A}$ and let $k_0$ be the minimum of $K$. If $K$ has a
maximum, it is a singleton and we already know that it
has a $\Delta^1_1$-u\-ni\-form\-i\-za\-tion. Thus we can assume that $K$ is a
stable $\omega$-sum of elements of $\scat_{<\alpha}$. To define a
$\Delta^1_1(K)$-uniformization $\Phi_K$ of $K$ we use the
$\Delta^1_1$-u\-ni\-form\-i\-za\-tions $\Phi_{[k,k']_K}$ of $[k,k']_K$.
Notice that the $\Phi_{[k,k']_K}$ are $\Delta^1_1(K)$ as well, because
intervals are $\Delta^0_1 (K)$.

We now define $\Phi_K (L)$ distinguishing three cases, each of them defined
by a $\Delta^1_1$ property.

\smallskip

If $L$ has no minimum (a $\Pi^0_2$ condition) then notice that $L \not\strong
K$ and let $\Phi_K(L)=\bot$.

If $L$ has two extrema (a $\Sigma^0_2$ condition) then notice that $L \strong
K$ if and only if there is $k_1 \in K$ such that $L \strong [k_0,k_1]_K$
holds, if and only if $\exists k_1 \in K\, \Phi_{[k_0,k_1]_K} (L) \neq \bot$.
In case $k_1$ does exist, choose $k_1$ minimal (as a natural number) and
define $\Phi_K(L)$ by pieces from $\Phi_{[k_0,k_1]_K} (L)$ using the function
$\DefPieces$ of Fact \ref{fact:defpiecesiseffective}. Otherwise let
$\Phi_K(L)=\bot$.

The last case is when $L$ has minimum but no maximum (a $\Delta^0_3$
condition). Let $\mathsf{B} = \{L \in \lin \mid \text{$L$ has minimum but no
maximum}\}$. If $L \in \mathsf{B}$ we denote by $\{l_i \mid i \in \N\}$ the
canonical cofinite sequence in $L$, defined by letting $l_0$ be the minimum
of $L$, and $l_{i+1}$ be the least (as natural number) $l \in L$ such that
$l_i <_L l$. Notice that the map $L \mapsto \{l_i \mid i \in \N\}$ is
$\Delta^1_1$ on the $\Delta^0_3$ set $\mathsf{B}$.

\begin{claim}
There exists a $\Delta^1_1(K)$ function $f: \mathsf{B} \to K^\N$ such that,
writing $f(L)(i) = k_i^L$, we have that the sequence $\{k_i^L \mid i\in\N\}$
is strictly increasing and cofinal in $K$ and moreover $L \strong K$ if and
only if $[l_i,l_{i+1}]_L \strong [k_i^L, k_{i+1}^L]_K$ for every $i \in \N$.
\end{claim}
\begin{proof}
Given $L \in \mathsf{B}$ we uniformly define in a $\Delta^1_1(K)$ way the
sequence $\{k_i^L \mid i\in\N\}$ and an auxiliary sequence $\{m_i^L \mid
i\in\N\} \in 2^{ \N }$ by induction on $i$. The intuition for $m_i^L$ is that
as long as $m_i^L = 0$ we are still hoping to show that $L \strong K$, while
when we set $m_i^L = 1$ we actually know that $L \not\strong K$ and we just
need to make sure that $\{k_i^L \mid i\in\N\}$ is cofinal in $K$.

As $K \in \mathsf{B}$, $\{k_i\mid i\in\N\}$ stands for the canonical cofinite
sequence in $K$. First, $k_0^L=k_0$ is the minimum of $K$ and $m_0^L=0$.
Assuming we have already defined $k_i^L$ and $m_i^L$ we proceed as follows.
If $m_i^L = 0$ we look for $k \in K$ such that $[l_i,l_{i+1}]_L \strong
[k_0^L, k]_K$. If we succeed, we let $k$ be the least (as natural number)
such $k$ and, using the stability of $K$ and Fact
\ref{stablesimplerexpression}, find the least (code for) a pair
$(\hat{k},k_{i+1}^L) \in K^2$ such that $[k_0^L,k]_K \strong
[\hat{k},k_{i+1}^L]_K$ and $\max_K (k_i^L, k_i) \leq_K \hat{k}$. This way we
defined $k_{i+1}^L$, and we set also $m_{i+1}^L = 0$. Notice that in this
case we have
\[
[l_i,l_{i+1}]_L \strong [k_0^L,k]_K \strong [\hat{k},k_{i+1}^L]_K \strong [k_i^L,k_{i+1}^L]_K.
\]
If either the search for $k \in K$ such that $[l_i,l_{i+1}]_L \strong [k_0^L,
k]_K$ fails or $m_i^L = 1$, we let $m_{i+1}^L = 1$ and $k_{i+1}^L$ be the
least (as natural number) $k \in K$ such that $\max_K (k_i^L, k_i) <_Kk$.

Since we made sure that $k_i \leq_K k_i^L$ the sequence $\{k_i^L \mid
i\in\N\}$ is indeed cofinal in $K$.

Now notice that if $m_i^L = 0$ for every $i$ then $[l_i,l_{i+1}]_L \strong
[k_i^L,k_{i+1}^L]_K$ for every $i$; using a definition by pieces we find a
witness to $L \strong K$. If instead $m_i^L = 1$ for some $i$ let $i$ be the
least such. Then $[l_i,l_{i+1}]_L \not\strong [k_i^L,k_{i+1}^L]_K$ and there
is no $k \in K$ such that  $[l_i,l_{i+1}]_L \strong [k_0^L, k]_K$. The latter
fact implies $L \not\strong K$.
\end{proof}

Now, using the claim, we can define $\Phi_K$ on $\mathsf{B}$. If
$[l_i,l_{i+1}]_L \strong [k_i^L,k_{i+1}^L]_K$ for every $i$ (a
$\Delta^1_1(K)$ condition) then $\Phi_K(L)$ can be defined applying the
function $\DefPieces$ from Fact \ref{fact:defpiecesiseffective} to the
epimorphisms $\Phi_{[k_i^L,k_{i+1}^L]_K} ([l_i,l_{i+1}]_L)$. If instead
$[l_i,l_{i+1}]_L \not\strong [k_i^L,k_{i+1}^L]_K$ for some $i$ we set
$\Phi_K(L)= \bot$. \hfill $\square$
\end{proof}

\medskip

We can finally pinpoint the complexity of $\StS$, but first the complexity of
$\StS\cap\scat$.

\begin{corollary}\label{cor:scatstrongsurj}
The set of scattered strongly surjective orders is $\Pi^1_1$ and
$\mathbf{\Pi}^1_1$-complete.
\end{corollary}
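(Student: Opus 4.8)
The plan is to prove the $\Pi^1_1$ upper bound and then read off $\mathbf{\Pi}^1_1$-completeness from a hardness result already in hand. Indeed, Proposition \ref{basichardness1} states that $\StS\cap\scat$ is $\mathbf{\Pi}^1_1$-hard, so once we show it lies in $\Pi^1_1$ (hence in $\mathbf{\Pi}^1_1$) the completeness is immediate.

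For the upper bound, note that $\scat$ is $\Pi^1_1$, so it suffices to express, uniformly for $K\in\scat$, the statement that $K$ is strongly surjective by a $\Pi^1_1$ condition. By Definition \ref{thedefinition}, $K$ is strongly surjective if and only if $\forall L\,(L\inj K\to L\strong K)$. The relation $L\inj K$ is $\Sigma^1_1$ (an embedding is a real), so $\neg(L\inj K)$ is $\Pi^1_1$; the crux is therefore to make $L\strong K$ uniformly $\Delta^1_1$ over $\scat$. This is exactly what Theorem \ref{theorem:boreluniformization} provides: every $K\in\scat$ admits a $\Delta^1_1$-uniformization $\Phi_K$, and by Fact \ref{fact:uniformuniformization} there is a $\Delta^1_1$ map $K\mapsto\alpha_K$ with $\Phi_K=\mathbf{W}^1_{\alpha_K}$. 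Since $\Phi_K$ is a function and $\Phi_K(L)=\bot$ exactly when $L\not\strong K$, for $K\in\scat$ we have $L\strong K\Longleftrightarrow (L,\bot)\notin\mathbf{W}^1_{\alpha_K}$; substituting the $\Delta^1_1$ map $\alpha_K$ into the $\Delta^1_1$ set $\mathbf{W}^1$ makes this a $\Delta^1_1$ relation in $(K,L)$.

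Combining these, for $K\in\scat$ we obtain
\[
K\in\StS \Longleftrightarrow \forall L\,\big(\neg(L\inj K)\lor (L,\bot)\notin\mathbf{W}^1_{\alpha_K}\big),
\]
whose matrix is a disjunction of a $\Pi^1_1$ and a $\Delta^1_1$ formula, hence $\Pi^1_1$; the universal quantifier $\forall L$ over the recursive space $\lin$ preserves $\Pi^1_1$, and intersecting with the $\Pi^1_1$ set $\scat$ leaves the set $\Pi^1_1$. The main obstacle is entirely absorbed into Theorem \ref{theorem:boreluniformization}: without the uniform $\Delta^1_1$-decidability of $L\strong K$ the definition only yields the $\Pi^1_2$ bound noted in the introduction. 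Beyond citing that theorem, the one point demanding care is uniformity --- that the $\Delta^1_1$ code $\alpha_K$ depends $\Delta^1_1$-ly on $K$ (Fact \ref{fact:uniformuniformization}) and that the domain of the partial code map causes no trouble, since we work throughout under the hypothesis $K\in\scat$, which is contained in that domain.
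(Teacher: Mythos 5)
Your overall architecture matches the paper's (hardness from Proposition \ref{basichardness1}, upper bound from Theorem \ref{theorem:boreluniformization}), but the pivotal effective step is not correctly justified, and as stated it is not a valid inference. The problematic claim is that ``substituting the $\Delta^1_1$ map $K\mapsto\alpha_K$ into the $\Delta^1_1$ set $\mathbf{W}^1$'' makes $(L,\bot)\notin\mathbf{W}^1_{\alpha_K}$ a $\Delta^1_1$ relation in $(K,L)$. The map of Fact \ref{fact:uniformuniformization} is only a \emph{partial} $\Delta^1_1$ map whose domain is a properly $\Pi^1_1$ set containing $\scat$; such a map is $\Delta^1_1(K)$-\emph{valued}, with graph (restricted to its domain) only $\Pi^1_1$, and it is not given by a $\Delta^1_1$ subset of $\lin\times\Bai$. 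Substitution therefore forces a quantification over the value: writing the relation as $\exists\alpha\,\bigl(\alpha=\alpha_K\wedge(L,\bot)\notin\mathbf{W}^1_{\alpha}\bigr)$ gives a priori a $\Sigma^1_2$ formula, and the universal form gives a priori $\Pi^1_2$; neither is $\Delta^1_1$, nor even $\Pi^1_1$, by any routine closure property. Saying that you ``work under the hypothesis $K\in\scat$'' does not repair this: relativizing to a $\Pi^1_1$ set is exactly why the composed relation cannot be expected to be $\Delta^1_1$, and even a $\Pi^1_1$ expression valid on that set has to be earned.

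What earns it is Fact \ref{Spector} (the Spector--Gandy-type theorem), which you never invoke and which is precisely the tool the paper uses at this point. Since the code $\alpha_K$ is a $\Delta^1_1(K)$ point, one has, for $K\in\scat$: $L\strong K$ if and only if $\exists\alpha\in\Delta^1_1(K,L)\,\bigl(\alpha$ codes a $\Delta^1_1$-uniformization of $K$ and $(L,\bot)\notin\mathbf{W}^1_\alpha\bigr)$; the matrix is $\Pi^1_1$ (the coding relation is $\Pi^1_1$, and the complement of $\mathbf{W}^1$ has a $\Pi^1_1$ form relative to $\mathbf{D}^1$), so Fact \ref{Spector} shows this relation is $\Pi^1_1$ --- not $\Delta^1_1$. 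The paper packages the same idea slightly differently: $K\in\StS\cap\scat$ iff $K$ is scattered and admits a \emph{strong} $\Delta^1_1$-uniformization (one with $\Phi(L)\neq\bot$ whenever $L\inj K$), and then applies Fact \ref{Spector} once, outermost. Fortunately, your final computation needs only the weaker $\Pi^1_1$ conclusion: with a $\Pi^1_1$ relation $R(K,L)$ agreeing with $L\strong K$ on $\scat$, the matrix $\neg(L\inj K)\vee R(K,L)$ is a disjunction of two $\Pi^1_1$ formulas, hence $\Pi^1_1$, and the rest of your argument (closure of $\Pi^1_1$ under $\forall L$, intersection with $\scat$, and hardness from Proposition \ref{basichardness1}) goes through verbatim. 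So the gap is localized but real: it sits exactly at the step that is the mathematical content of the upper bound.
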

\begin{proof}
The fact that $\StS \cap \scat$ is
$\mathbf{\Pi}^1_1$-hard is contained in Proposition \ref{basichardness1}.

Given an order $K$ say that a $\Delta^1_1$-u\-ni\-form\-i\-za\-tion $\Phi$ of
$K$ is strong if for all $L$ such that $L \inj K$ we have $\Phi(L) \neq
\bot$. By Theorem \ref{theorem:boreluniformization} $K$ is scattered and
strongly surjective if and only if it is scattered and admits a strong
$\Delta^1_1$-u\-ni\-form\-i\-za\-tion.

This gives in turn, using Fact \ref{Spector}, a
$\Pi^1_1$ definition of $\StS \cap \scat$.
\end{proof}

\begin{corollary}\label{cor:upperbound}
The set $\StS$ is the union of a $\Sigma^1_1$ set and a $\Pi^1_1$ set. It is
in particular $ \check{\mathrm{D}}_2(\mathbf{\Pi}^1_1)$, and in fact $
\check{\mathrm{D}}_2(\mathbf{\Pi}^1_1)$-complete.
\end{corollary}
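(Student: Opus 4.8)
The plan is to assemble this corollary directly from the results already established, since all the substantive work has been carried out in the preceding sections. First I would split $\StS$ according to whether an order is scattered, writing
\[
\StS = (\StS \cap \scat) \cup (\StS \setminus \scat).
\]
By Corollary \ref{cor:scatstrongsurj} the first piece $\StS \cap \scat$ is $\Pi^1_1$, and by Proposition \ref{basichardness2} the second piece $\StS \setminus \scat$ (the non-scattered strongly surjective orders) is $\Sigma^1_1$. Hence $\StS$ is the union of a $\Sigma^1_1$ set and a $\Pi^1_1$ set, which is exactly the form of a set in $\check{\mathrm{D}}_2(\mathbf{\Pi}^1_1)$. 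This gives the upper bound.

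For the lower bound I would simply invoke Theorem \ref{main}, which already shows that $\StS$ is hard for $\check{\mathrm{D}}_2(\mathbf{\Pi}^1_1)$. Combining the upper bound with this hardness yields $\check{\mathrm{D}}_2(\mathbf{\Pi}^1_1)$-completeness, which is the full statement.

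The one thing to keep visible is that each half of the decomposition genuinely has the claimed complexity, so that their union lands in $\check{\mathrm{D}}_2(\mathbf{\Pi}^1_1)$ and not merely in some larger class. The $\Sigma^1_1$ bound on the non-scattered part is the easy half: it rests on the characterization of Proposition \ref{nonscat}(ii), namely that a non-scattered countable $L$ is strongly surjective if and only if $\Q \strong L$, an existential (hence analytic) condition. The $\Pi^1_1$ bound on the scattered part is where all the genuine difficulty lies, and it has already been absorbed into Corollary \ref{cor:scatstrongsurj}, which in turn depends on the $\Delta^1_1$-uniformization for epimorphisms of Theorem \ref{theorem:boreluniformization}. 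Consequently there is no remaining obstacle at this stage: the corollary is purely a bookkeeping step recording the upper bound from Section \ref{analysis} together with the hardness from Section \ref{lower bounds}.
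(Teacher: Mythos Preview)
Your proposal is correct and follows exactly the paper's own argument: the paper's proof simply cites Proposition~\ref{basichardness2}, Corollary~\ref{cor:scatstrongsurj}, and Theorem~\ref{main}, which is precisely the decomposition and combination you describe.
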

\begin{proof}
By Proposition \ref{basichardness2}, Corollary \ref{cor:scatstrongsurj}
and Theorem \ref{main}.
\end{proof}


\section{Looking for uncountable strongly surjective orders} \label{unctbless}

\subsection{Classical examples are not strongly surjective}\label{classics}
Recall that Proposition \ref{basics2}(4) states that a strongly surjective
linear order can have at most the cardinality of the continuum. Here we show
that the most common orders of size the continuum and those that can be
obtained from them using basic operations are not strongly surjective. We use
different techniques, and for some linear orders we have different proofs
that they are not strongly surjective.

We first give a cardinality obstruction for strong surjectivity of suborders
of $\R$.

\begin{theorem}\label{prop:CHnegative}
Let $\aleph_0< \kappa \leq 2^{\aleph_0}$, and assume $2^{\aleph_0}<
2^{\kappa}$. Then no $X \subseteq \R$ of cardinality $\kappa$ can be strongly
surjective.
\end{theorem}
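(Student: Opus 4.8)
The plan is to derive a contradiction by counting, exploiting the hypothesis $2^{\aleph_0} < 2^{\kappa}$ to separate two quantities: the number of isomorphism types of suborders of $X$, which I will bound \emph{below} by $2^{\kappa}$, and the number of isomorphism types of epimorphic images of $X$, which I will bound \emph{above} by $2^{\aleph_0}$. If $X$ were strongly surjective, then every suborder $Y \inj X$ would satisfy $Y \strong X$, so up to isomorphism every suborder would be an epimorphic image of $X$; the two counts would then collide, forcing $2^{\kappa} \leq 2^{\aleph_0}$ and contradicting the hypothesis.

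The crux, and the step I expect to be the main obstacle, is the following counting lemma: for any $Y \subseteq \R$ the number of order preserving maps $Y \to \R$ is at most $2^{\aleph_0}$. To prove it I would fix a countable order-dense $D \subseteq Y$ and, for an order preserving $g\colon Y \to \R$ and $y \in Y$, compare $g(y)$ with $a_y = \sup\{g(d) \mid d \in D,\ d <_Y y\}$ and $b_y = \inf\{g(d) \mid d \in D,\ d >_Y y\}$. Monotonicity gives $a_y \leq g(y) \leq b_y$, so $g$ is pinned down by $g\restriction D$ except on $J_g = \{y \in Y \mid a_y < b_y\}$. The delicate point is that $J_g$ is countable: for $y <_Y y'$ in $J_g$ with some $d \in D$ strictly between them one has $b_y \leq g(d) \leq a_{y'}$, so the open intervals $(a_y,b_y)$ and $(a_{y'},b_{y'})$ are disjoint; the only pairs admitting no separating $d$ are the adjacent pairs of $Y$, which are countable since they produce pairwise disjoint intervals of $\R \setminus Y$. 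Choosing a rational in each interval then shows $J_g$ is countable. Since $J_g$ is moreover determined by $g\restriction D$, the assignment $g \mapsto (g\restriction D, g\restriction J_g)$ is injective with range of size at most $(2^{\aleph_0})^{\aleph_0} = 2^{\aleph_0}$.

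Granting the lemma, both bounds follow quickly. Every epimorphism from $X$ onto a suborder $Y$, composed with a fixed embedding $Y \inj \R$, is an order preserving map $X \to \R$; by the lemma there are at most $2^{\aleph_0}$ of these, hence at most $2^{\aleph_0}$ isomorphism types of epimorphic images of $X$. For the lower bound I note that two subsets of $X$ are isomorphic precisely when one is the image of an order preserving injection of the other into $X$; applying the lemma to each subset, every isomorphism class contains at most $2^{\aleph_0}$ of the $2^{\kappa}$ subsets of $X$, so the number of isomorphism types of suborders is at least $2^{\kappa}$, in particular strictly larger than $2^{\aleph_0}$. Comparing the two counts under the assumption that $X$ is strongly surjective then contradicts $2^{\aleph_0} < 2^{\kappa}$, completing the proof.
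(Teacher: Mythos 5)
Your proof is correct, and its engine is exactly the paper's: a counting argument showing that an order preserving map from a suborder of $\R$ into $\R$ is pinned down by countable data (its restriction to a countable dense set plus its values on a countable exceptional set), hence there are at most $2^{\aleph_0}$ such maps. Your treatment of the exceptional set $J_g$ via pairwise disjoint open intervals and the countably many adjacent pairs of $Y$ is the same mechanism as the paper's convex value-sets $I_x$, of which only countably many can be non-trivial. Where you genuinely differ is the endgame. The paper concludes directly: if $X$ were strongly surjective, each of the $2^{\kappa}$ non-empty subsets of $X$ would be the range of an order preserving map $X \to X$, and since a map determines its range this gives $2^{\kappa} \leq 2^{\aleph_0}$ at once. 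You instead route the contradiction through isomorphism types, which obliges you to prove the counting lemma for \emph{every} $Y \subseteq \R$ (not just $Y = X$) so as to bound each isomorphism class of subsets by $2^{\aleph_0}$. This detour is sound, and it buys the slightly stronger observation that even the number of isomorphism types of suborders of $X$ exceeds $2^{\aleph_0}$; but it is not needed, since your lemma applied to $Y = X$ alone, together with the fact that a surjection onto a suborder determines that suborder, already finishes the proof. One point of precision: for your claim that non-adjacent pairs in $J_g$ admit a strictly separating point of $D$, you should take $D$ dense in the subspace (equivalently order) topology of $Y$, not merely order-dense in the weak sense that some $d$ satisfies $y \leq d \leq y'$; with the topological choice, which always exists since $\R$ is hereditarily separable, your argument goes through verbatim.
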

\begin{proof}
We use a counting argument: there are more subsets of $X$ than
order-preserving  maps from $X$ to $X$.

Since $X\subseteq \R$ we can find a countable subset $D$ of $X$ such that for
all $x, x' \in X$, if $x<x'$ holds then there is a $d\in D$ with $x\leq d\leq
x'$, and such that moreover the endpoints of $X$ belong to $D$, if they
exist. Every order-preserving map from $X$ to $X$ is the extension of an
order-preserving map from $D$ to $X$, and there are at most continuum many of
those:
\[
\big|\{f:D\to X\mid f\mbox{ order-preserving} \} \big | \leq \left|X^D\right| =\kappa^{\aleph_0}=2^{\aleph_0}.\]
Fix now $f:D\to X$ order-preserving, and compute how many order-preserving
extensions $g:X\to X$ of $f$ there can be. Take $x\in X\setminus D$, we can
pick for $g(x)$ any $y\in X$ that satisfies
\begin{equation}\label{CHequ:1}
\forall d\in ({\lao},x)_X\cap D\ \forall d'\in (x,{\rao})_X\cap D\ f(d)\leq y\leq f(d').
\end{equation}
Call $I_x$ the convex set of all points $y$ satisfying (\ref{CHequ:1}). If
$I_x$ is trivial, that is empty or reduced to a singleton, then there is at
most one order-preserving extension of $f$ to $x$. Notice now that by the
properties of $D$, for $x\neq x'$ in $X\setminus D$ the sets $I_x$ and
$I_{x'}$ are disjoint. Since $X\subseteq \R$ there can be only countably many
non-trivial $I_x$, and each of these yields at most $\kappa$ possible
extensions, so for a fixed $f$ we have
\[\big|\{g:X\to X\mid g\mbox{ order-preserving and }f\subset g\}\big|\leq\kappa^{\aleph_0}=2^{\aleph_0}.\]
All in all we have at most continuum many order-preserving maps from $X$ to
$X$, but $X$ has even more subsets by hypothesis so it cannot be strongly
surjective.
\end{proof}

\begin{corollary}
$\R$ and $\R \setminus \Q$ are not strongly surjective.
\end{corollary}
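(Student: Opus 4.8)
The plan is to apply Theorem \ref{prop:CHnegative} directly, taking $\kappa = 2^{\aleph_0}$. Both $\R$ and $\R \setminus \Q$ are subsets of $\R$ of cardinality exactly $2^{\aleph_0}$, so they fall within the scope of that theorem as soon as its two hypotheses are verified. The first hypothesis, $\aleph_0 < \kappa \leq 2^{\aleph_0}$, is immediate since $\kappa = 2^{\aleph_0}$ and $\aleph_0 < 2^{\aleph_0}$. The only point that needs attention is the second hypothesis, $2^{\aleph_0} < 2^{\kappa}$.

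For $\kappa = 2^{\aleph_0}$ this inequality reads $2^{\aleph_0} < 2^{2^{\aleph_0}}$, which is an instance of Cantor's theorem ($\lambda < 2^{\lambda}$ for every cardinal $\lambda$, here applied to $\lambda = 2^{\aleph_0}$). It therefore holds in ZFC with no extra set-theoretic assumption. Consequently Theorem \ref{prop:CHnegative} applies and rules out every $X \subseteq \R$ of size $2^{\aleph_0}$ as strongly surjective; in particular neither $\R$ nor $\R \setminus \Q$ is strongly surjective.

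There is essentially no obstacle here: all the work is contained in Theorem \ref{prop:CHnegative}, and this corollary is just the special case in which setting $\kappa$ equal to the continuum makes the cardinal-arithmetic hypothesis automatic. The only thing worth stressing is that the assumption $2^{\aleph_0} < 2^{\kappa}$ — which for intermediate $\kappa$ with $\aleph_0 < \kappa < 2^{\aleph_0}$ can fail in some models of set theory — becomes unconditionally true once $\kappa = 2^{\aleph_0}$. Thus these two concrete orders are excluded outright, without appealing to CH or to any hypothesis beyond ZFC.
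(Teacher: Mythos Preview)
Your proof is correct and is exactly the intended argument: the paper states this corollary immediately after Theorem~\ref{prop:CHnegative} without proof, the point being precisely that for $\kappa = 2^{\aleph_0}$ the hypothesis $2^{\aleph_0} < 2^{\kappa}$ is just Cantor's theorem. There is nothing to add.
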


Theorem \ref{prop:CHnegative} and its proof do not provide a concrete $L$
such that $L \inj \R$ yet $L \not\strong \R$ (and similarly for $\R \setminus
\Q$). A useful technique to prove that a linear order does not admit
epimorphisms onto another one is to compare their gaps. Recall that a
\emph{gap} of $K$ is given by a non-empty initial segment $A \sub K$ with no
maximum such that $K \setminus A$ is non-empty and has no minimum. Let $G(K)$
be the set of gaps of $K$ linearly ordered by $\subseteq$.

\begin{proposition}
If $L$ and $K$ are linear orders such that $L \strong K$ then $G(L) \inj
G(K)$.
\end{proposition}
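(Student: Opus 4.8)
The plan is to build the embedding $G(L) \inj G(K)$ by pulling gaps back along an epimorphism. Fix $\sigma \in \Epi(L,K)$, so that $\sigma \colon K \to L$ is order-preserving and onto, and for a gap $A$ of $L$ define $\Phi(A) = \sigma^{-1}(A) = \{k \in K \mid \sigma(k) \in A\}$. The first routine check is that $\Phi(A)$ is an initial segment of $K$: if $k \in \Phi(A)$ and $k' \leq_K k$, then $\sigma(k') \leq_L \sigma(k) \in A$, and since $A$ is an initial segment this forces $\sigma(k') \in A$, i.e. $k' \in \Phi(A)$. Dually, $K \setminus \Phi(A) = \sigma^{-1}(L \setminus A)$ is the corresponding final segment.

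The crux is to show that $\Phi(A)$ is itself a gap of $K$, and this is exactly where the surjectivity of $\sigma$ is indispensable; I expect it to be the only delicate point. Non-emptiness of $\Phi(A)$ and of its complement follow immediately from $A \neq \es \neq L \setminus A$ together with surjectivity. To see that $\Phi(A)$ has no maximum I would argue by contradiction: if $k$ were its maximum, then for every $l \in A$ pick, by surjectivity, some $k'$ with $\sigma(k') = l$; then $k' \in \Phi(A)$, so $k' \leq_K k$ and hence $l = \sigma(k') \leq_L \sigma(k)$. This would make $\sigma(k) \in A$ a maximum of $A$, contradicting that $A$ is a gap. Symmetrically, if $K \setminus \Phi(A)$ had a minimum $k$, then the same surjectivity argument would exhibit $\sigma(k)$ as a minimum of $L \setminus A$, again impossible. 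Therefore $\Phi(A) \in G(K)$.

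It then remains to verify that $\Phi$ is order-preserving and injective. Since the initial segments of a linear order are totally ordered by inclusion, any two distinct gaps $A, A'$ of $L$ are comparable, say $A \subsetneq A'$; choosing $l \in A' \setminus A$ and any $k$ with $\sigma(k) = l$ yields $k \in \Phi(A') \setminus \Phi(A)$, so $\Phi(A) \subsetneq \Phi(A')$. Thus $\Phi$ strictly preserves $\subseteq$, so it is simultaneously order-preserving and injective, and it witnesses $G(L) \inj G(K)$, as required.
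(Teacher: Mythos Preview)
Your proof is correct and takes exactly the same approach as the paper: pull back each gap $A$ of $L$ along the epimorphism to $\sigma^{-1}(A)$. The paper records this in a single sentence, while you have carefully spelled out why the preimage is again a gap and why the map is strictly $\subseteq$-preserving; no ideas are added or missing.
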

\begin{proof}
If $f$ is an epimorphism from $K$ onto $L$ then $A \mapsto f^{-1}(A)$ is an
injection from $G(L)$ to $G(K)$.
\end{proof}

\begin{corollary}\label{cor:gaps}
Any linear order $L$ with $|G(L)|< 2^{\aleph_0}$ is such that $\Q \not\strong
L$.

Hence $\Q$ witnesses the fact that $\R$, $\R \setminus \Q$, $\Z^\N$, and, for
every countable $\alpha \geq \omega$, $2^\alpha$ ordered lexicographically
are not strongly surjective.
\end{corollary}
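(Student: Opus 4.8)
The plan is to prove the general principle first and then apply it to each named order. For the first sentence I would begin by computing $|G(\Q)|$: a gap of $\Q$ is a Dedekind cut with no maximum on the left and no minimum on the right, and such cuts correspond bijectively to the irrational reals, so $|G(\Q)| = 2^{\aleph_0}$. Now suppose toward a contradiction that $\Q \strong L$ for some $L$ with $|G(L)| < 2^{\aleph_0}$. The preceding Proposition gives $G(\Q) \inj G(L)$, and an embedding is in particular an injection, so $2^{\aleph_0} = |G(\Q)| \leq |G(L)| < 2^{\aleph_0}$, a contradiction. Hence $\Q \not\strong L$, which is the first sentence.

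For the second sentence I would invoke Definition \ref{thedefinition} in the form: an order $L$ fails to be strongly surjective as soon as some suborder of $L$ is not a surjective image of $L$, and I would use $\Q$ as this suborder throughout. Thus for each $L$ in the list it suffices to check that $\Q \inj L$ and that $|G(L)| < 2^{\aleph_0}$, the latter yielding $\Q \not\strong L$ by the first sentence. The embeddings are routine: $\Q \subseteq \R$; the irrationals are densely ordered without endpoints and hence contain a copy of $\Q$; the lexicographic $\Z^\N$ is densely ordered without endpoints (between two elements first differing at coordinate $n$ one inserts a third by perturbing a later coordinate), so it contains $\Q$; and $2^\omega$ embeds into $2^\alpha$ for $\alpha \geq \omega$ by padding with zeros, while $\Q$ embeds into $2^\omega$ via binary expansions of the non-dyadic rationals.

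The substance of the argument is the gap count, carried out order by order. For $\R$ there are no gaps, as $\R$ is Dedekind complete, so $|G(\R)| = 0$. For $\R \setminus \Q$ the gaps correspond exactly to the rationals: the cut determined by $q \in \Q$ has no largest element below and no least above, hence is a gap, whereas a cut determined by an irrational $s$ has $s$ itself as an extremum on one side, hence is not a gap; so $|G(\R \setminus \Q)| = \aleph_0$. For $\Z^\N$ I would analyze a gap $(A,B)$ coordinatewise: reading off at each level the threshold value of the current coordinate and recursing into the block where the cut lands, one sees that a genuine gap terminates after finitely many coordinates, since an infinite descent would single out an actual element of $\Z^\N$ filling the cut; thus gaps are indexed by finite sequences over $\Z$ together with a choice of one of two terminal types, giving $|G(\Z^\N)| = \aleph_0$. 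Finally, for $2^\alpha$ I would show there are no gaps by a direct filling argument: given a cut $(A,B)$, define $x \in 2^\alpha$ by transfinite recursion, setting $x(\beta) = 1$ precisely when some element of $A$ agrees with $x$ below $\beta$ and has value $1$ at $\beta$. Comparing any $a \in A$ and any $b \in B$ with $x$ at their first point of difference (which exists, since any nonempty set of ordinals has a least element) shows that $x$ is simultaneously an upper bound of $A$ and a lower bound of $B$, whence $x = \max A$ or $x = \min B$ and the cut is filled; so $|G(2^\alpha)| = 0$ for every $\alpha$.

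I expect the main obstacle to be the two coordinatewise arguments, namely the finiteness of the descent for $\Z^\N$ and the completeness of $2^\alpha$: in both one must verify that the recursion behaves correctly, and for $\Z^\N$ that an infinite descent genuinely produces an element rather than a gap. Once these are settled, every order in the list has strictly fewer than $2^{\aleph_0}$ gaps, so for each we have $\Q \inj L$ yet $\Q \not\strong L$, and therefore none of them is strongly surjective.
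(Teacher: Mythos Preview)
Your proposal is correct and follows essentially the same approach as the paper: derive the first sentence from the preceding Proposition together with $|G(\Q)|=2^{\aleph_0}$, then for each listed order check that $\Q$ embeds and that the gap count is below $2^{\aleph_0}$.

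The one noteworthy difference is the handling of $\Z^{\N}$. The paper argues more slickly: it observes that adjoining the nonempty finite sequences $\Z^{<\N}\setminus\{\emptyset\}$ to $\Z^{\N}$ (with $x<_L s$ iff $x\restriction\lh(s)\leq_{lex}s$) produces a complete order, so every gap of $\Z^{\N}$ is filled by one of these countably many finite sequences. Your coordinatewise descent reaches the same conclusion but requires more bookkeeping; the paper's formulation packages exactly the recursion you describe into a single completeness statement and may be worth adopting. Your treatment of $2^{\alpha}$ is essentially the paper's (both build $\sup A$ by transfinite recursion, just phrased differently), and your verification that $\Q$ embeds in each order is more explicit than the paper's, which simply notes that each order is non-scattered.
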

\begin{proof}
The first part follows immediately from the Proposition because $|G(\Q)| =
2^{\aleph_0}$.

Each of the linear orders considered in the second part of the statement (and
indeed each short uncountable linear order) is non-scattered. Now observe
that $ \R $ and $2^\alpha $ are complete (that is, they have no gaps), while $|G(\R \setminus \Q)| = |G(\Z^\N)| =
\aleph_0$. The statements about $\R$ and $\R \setminus \Q$ are obvious.

To see that $2^\alpha$ is complete let $A$ be a non-empty subset of $2^{\alpha}$. Define inductively $x\in 2^{\alpha }$
as follows: given $\beta\in\alpha $ and assuming that $x(\gamma )$ has been
constructed for every $\gamma\in\beta $, let $x(\beta )\in\{ 0,1\} $ be the
least value such that $x|_{\beta +1}$ majorizes $\{ z|_{\beta +1}\mid z\in
A\} $. Then $x$ majorizes $A$. Moreover, if $y$ majorizes $A$ and
$y<_{lex}x$, let $s=y\cap x$; then $s0\subseteq y,s1\subseteq x$, contrary to
the definition of $x$. So $x=\sup A$.

To see that $\Z^\N$ has only countably many gaps let $L= \Z^\N \cup (
\Z^{< \N }\setminus\{\es\} )$ where the order on $\Z^\N$ is extended to
$L$ by ordering $ \Z^{< \N }\setminus\{\es\} $ lexicographically and
letting $x <_L s$ if and only if $x|_{\lh (s)} \leq_{lex} s$ for every $x \in
\Z^\N$ and $s \in \Z^{< \N }\setminus\{\es\} $. Then $L$ is complete.
\end{proof}

Notice that the fact that $2^\N$ and $\Z^\N$ are not strongly surjective
shows that Corollary \ref{multiple}, stating that strongly surjective orders
are closed under finite products, cannot be extended to infinite products.

The next natural candidates for being uncountable strongly surjective orders
are the finite products obtained by using $\R \setminus \Q$, $\R$ and
possibly some countable orders as factors. We show however that no
uncountable strongly surjective order can be obtained in this way.

\begin{lemma}\label{fact:negativeproduct}
Let $K$, $L$ and $M$ be linear orders. Suppose that $K \not\strong L$ and
that $K'\not\strong M$ for any convex subset $K'$ of $K$ that has more than
one point. Then we have $K \not\strong ML$.
\end{lemma}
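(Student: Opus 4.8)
The plan is to argue by contradiction, assuming an epimorphism $\varphi\in\Epi(K,ML)$ exists and producing from it an element of $\Epi(K,L)$, which contradicts the hypothesis $K\not\strong L$. Throughout I use that $ML=\sum_{l\in L}M$, so that the copies $M_l=M\times\{l\}$ are convex suborders of $ML$ isomorphic to $M$, arranged in order type $L$: whenever $l<_Ll'$, every point of $M_l$ precedes every point of $M_{l'}$ in $ML$.

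First I would record that each image $\varphi(M_l)$ is a convex subset of $K$. Indeed $\varphi$ is monotone non-decreasing, so if $\varphi(x_1)<_K k<_K\varphi(x_2)$ with $x_1,x_2\in M_l$, then picking $y$ with $\varphi(y)=k$ (using surjectivity of $\varphi$) monotonicity forces $x_1<_{ML}y<_{ML}x_2$, and convexity of $M_l$ puts $y\in M_l$, whence $k\in\varphi(M_l)$. The heart of the argument is then the observation that $\varphi\rest{M_l}$ witnesses $\varphi(M_l)\strong M$: composing with the isomorphism $M\simeq M_l$ gives an order-preserving surjection from $M$ onto $\varphi(M_l)$. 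Thus $\varphi(M_l)$ is a convex subset of $K$ onto which $M$ surjects, so by hypothesis it cannot have more than one point; being non-empty (as $M$ is), it is a singleton $\{k_l\}$.

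Finally I would define $\psi\colon L\to K$ by $\psi(l)=k_l$ and check it is an epimorphism. It is monotone because $l<_Ll'$ makes every point of $M_l$ precede every point of $M_{l'}$, giving $k_l\leq_K k_{l'}$; it is surjective because any $k\in K$ equals $\varphi(x)$ for some $x$, and $x$ lies in some $M_l$, so $k\in\varphi(M_l)=\{k_l\}$ and $k=\psi(l)$. Hence $\psi\in\Epi(K,L)$, i.e.\ $K\strong L$, contradicting the assumption. Therefore no such $\varphi$ exists and $K\not\strong ML$.

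The only step I expect to require genuine care is the convexity of the images $\varphi(M_l)$, since the whole reduction depends on being able to apply the convex-subset hypothesis to them; but this is a routine consequence of monotonicity together with surjectivity of $\varphi$. Everything else amounts to bookkeeping about the product order $ML$ and the trivial observation that collapsing each fibre $M_l$ to a point leaves an epimorphism on the index order $L$.
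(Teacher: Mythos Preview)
Your proof is correct and follows essentially the same route as the paper's: both observe that each image $\varphi(M_l)$ is a convex subset of $K$ satisfying $\varphi(M_l)\strong M$, so by hypothesis it must be a singleton, and then collapsing fibres yields an epimorphism from $L$ onto $K$, contradicting $K\not\strong L$. The paper merely presents the dichotomy in the reverse order (``if all fibres were singletons we would get $K\strong L$; so some fibre is non-degenerate, contradicting the convex-subset hypothesis''), but the underlying argument is identical.
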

\begin{proof}
Suppose we have $\varphi \in \Epi(K,ML)$ and consider $f: L \rao
\mathcal{P}(K)$ defined by $f(r) = \varphi (M\times\{r\})$. There is some
$r\in L$ such that $f(r)$ is not a singleton, otherwise $f$ would induce an
epimorphism from $L$ onto $K$. But then $f(r) \strong M$ and $f(r)$ is a
convex subset of $K$ with more than one point, which is again impossible.
\end{proof}

\begin{definition}
If $\kappa $ is an infinite cardinal, a linear order $L$ is $\kappa $-{\it
dense} if it has no end points and between any two distinct elements of $L$
there are exactly $\kappa$ elements of
$L$.
\end{definition}

\begin{lemma} \label{manypointsmanygaps}
There is a $2^{\aleph_0}$-dense suborder $M$ of $\R \setminus \Q$ such that
every interval $[x,y]_M$, for $x<_M y$, has $2^{\aleph_0}$ gaps.
\end{lemma}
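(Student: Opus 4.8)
The plan is to realise $M$ inside a Cantor set of irrationals and to produce its gaps as two-sided limit points that are deliberately omitted. The basic observation is that for a set $M \subseteq \R$ the gaps of $M$ are exactly the reals $r \notin M$ that are approached from both sides by $M$: a gap is a cut $(A, M \setminus A)$ with $A$ having no maximum and $M \setminus A$ no minimum, and setting $r = \sup A = \inf (M\setminus A)$ (computed in $\R$) forces $r \notin M$ and makes $r$ a two-sided limit of $M$; conversely every such $r$ induces a gap. Hence it suffices to construct a $2^{\aleph_0}$-dense $M \subseteq \R \setminus \Q$ together with a set $G$, disjoint from $M$, consisting of two-sided limit points of $M$, such that $G$ meets every interval of $M$ in $2^{\aleph_0}$ points.

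First I would fix a Cantor set $C \subseteq \R \setminus \Q$. Build a scheme of closed intervals $(I_s)_{s \in 2^{<\N}}$ with irrational endpoints, with $I_{s0}$ lying to the left of $I_{s1}$ inside $I_s$ and disjoint from it, and with diameters tending to $0$; by shrinking the level-$(n+1)$ intervals so as to avoid the $n$-th rational, one guarantees $C = \bigcap_n \bigcup_{\lh(s)=n} I_s \subseteq \R \setminus \Q$. Let $E \subseteq C$ be the countable set of endpoints of the complementary intervals of $C$ (the points of $C$ admitting an empty one-sided neighbourhood in $C$), and put $C^* = C \setminus E$. Every point of $C^*$ is a two-sided limit point of $C$, one has $|C^*| = 2^{\aleph_0}$, and $\inf C, \sup C \in E$.

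Next I would partition $C^* = M \sqcup G$ so that both $M$ and $G$ meet $I_s \cap C^*$ in a set of size $2^{\aleph_0}$ for every $s \in 2^{<\N}$. This is routine: transporting $C$ to $2^\N$ by the canonical order isomorphism, one may take $M$ to be the preimage of a tail set such as $\{x \mid \limsup_n \frac1n \sum_{i<n} x(i) = 1\}$, minus the countably many eventually constant sequences, whose trace on every basic clopen set, as well as that of its complement, has size $2^{\aleph_0}$; alternatively a direct recursion along the tree does the job. I then claim $M$ is as required. Since $M \subseteq C$, we have $M \subseteq \R \setminus \Q$. Given $x <_M y$, both endpoints are two-sided limit points of $C$, so $(x,y)$ contains a point $z \in C$ at positive distance from $x$ and $y$, and hence some $I_s$ with $z \in I_s \subseteq (x,y)$; as $M \cap I_s$ and $G \cap I_s$ both have size $2^{\aleph_0}$, the interval $(x,y)$ contains $2^{\aleph_0}$ points of $M$, which gives $2^{\aleph_0}$-density (there are no endpoints because $\inf C, \sup C \notin M$), as well as $2^{\aleph_0}$ points of $G$. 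Finally each $g \in G \subseteq C^*$ is a two-sided limit point of $C$, and since $M$ has size $2^{\aleph_0}$ inside every basic interval $I_s$, it accumulates at $g$ from both sides; as $g \notin M$, this makes $g$ a gap of $M$. Thus every $[x,y]_M$ with $x<_M y$ contains $2^{\aleph_0}$ gaps.

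The heart of the argument is the gap/limit-point dictionary of the first paragraph, which converts the requirement on $G(M)$ into a purely topological statement about $\R$. The only points needing care are placing $C$ entirely among the irrationals — handled by the avoidance step in the scheme — and discarding the jump endpoints $E$ before partitioning: a point of $E$ is one-sided in $C$, so it could never be a two-sided limit of $M$ and would not yield a gap. Neither difficulty is deep; the real content is arranging the three demands (irrationality, $2^{\aleph_0}$-density, and $2^{\aleph_0}$ omitted two-sided limit points in every interval) simultaneously, and the tail-set partition does exactly that.
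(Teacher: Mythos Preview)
Your argument is correct. The only imprecision is in the opening dictionary, where you assert $\sup A = \inf(M\setminus A)$ for an arbitrary gap $(A, M\setminus A)$; this can fail (and does fail, in your own $M$, across each complementary interval of $C$), but you only use the converse direction --- that a two-sided limit point of $M$ lying outside $M$ yields a gap --- which is sound, and distinct such points yield distinct gaps, so the counting goes through.

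The paper takes a genuinely different route. It begins with all of $\R\setminus\Q$ and runs a transfinite recursion of length $2^{\aleph_0}$: at stage $(\beta,n)$ one picks a fresh point $x_\beta^n$ in the $n$th rational-endpoint interval $I_n$, together with a countable set $Y_\beta^n$ approaching $x_\beta^n$ from both sides, and then sets $M = (\R\setminus\Q)\setminus\{x_\beta^n : \beta<2^{\aleph_0},\, n\in\N\}$. The $Y$-sets guarantee $2^{\aleph_0}$-density, and each $x_\beta^n$ is a gap. Your construction instead lives inside a single Cantor set of irrationals and manufactures $M$ and its gap-supply $G$ simultaneously via a tail-set partition of $2^\N$. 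What your approach buys is definability: your $M$ can be taken Borel, and no transfinite recursion or nontrivial choice is needed. The paper's argument is in a sense more bare-handed --- pure bookkeeping, no Cantor-set scaffolding --- but it produces an a priori undefinable $M$.
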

\begin{proof}
The following construction is a variation on the classical construction of a
Bernstein set (see e.g.\ Example 8.24 in \cite{cdst}).

Let $\{ I_n\}_{n\in \N }$ be the set of the traces on $\R \setminus \Q$ of
the real intervals with rational endpoints. In each $I_n$ we define subsets
$X_n=\{ x_{\beta }^n\}_{\beta\in 2^{\aleph_0}}$ and $Y_{\beta }^n=\{ y_{\beta
m}^n\}_{m\in \N }$, for $\beta\in 2^{\aleph_0}$, such that the elements
$x_{\beta }^n,y_{\beta m}^n$ are all distinct. Fix $\beta <2^{\aleph_0}$ and
$n\in \N $. Suppose that $x_{\beta '}^{n'}$ and $y_{\beta 'm}^{n'}$ are
defined for all $\beta '<\beta $ and $n',m\in \N $, as well as $x_{\beta
}^{n'}$ and $y_{\beta m}^{n'}$ for $n'<n$ and $m\in \N $. Notice that the set
$A$ of these elements, if non-empty, has cardinality $\max (\aleph_0,|\beta
|)< 2^{\aleph_0}$. So pick any $x_{\beta}^n\in I_n\setminus A$ and distinct
$y_{\beta m}^n\in I_n\setminus (A\cup\{ x_{\beta }^n\} )$ such that
\[
\sup\{ y_{\beta m}^n\mid m\in \N ,y_{\beta m}^n<x_{\beta }^n\} =x_{\beta }^n=
\inf\{ y_{\beta m}^n\mid m\in \N ,x_{\beta }^n<y_{\beta m}^n\} .
\]
It follows that every $x_{\beta }^n$ is a gap in $M=( \R \setminus \Q
)\setminus\{ x_{\beta }^n\mid\beta <2^{\aleph_0},n\in \N \} $. Moreover, if
$x,y\in M,x<y$, then $[x,y]_M$ contains some $I_n\cap M$, which has the
cardinality of the continuum (containing all $y_{\beta m}^n$) and has
continuum many gaps (at least every $x_{\beta }^n$).
\end{proof}

\begin{theorem}\label{unctblclassic}
Let $L = \prod_{0\leq i\leq n} L_i$, where for each $i$ either $L_i$ is
countable or $\R \setminus \Q \inj L_i$ and $|G(L_i)| < 2^{\aleph_0}$ ($\R
\setminus \Q$ and $\R$ are instances of such linear orders). If $L$ is
uncountable, then $L$ is not strongly surjective.
\end{theorem}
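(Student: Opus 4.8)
The plan is to produce a single uncountable suborder $D$ of $L$ admitting no epimorphism from $L$; since $D\inj L$, this shows $L$ is not strongly surjective. For $D$ I take the $2^{\aleph_0}$-dense suborder of $\R\setminus\Q$ supplied by Lemma \ref{manypointsmanygaps} (there called $M$). To organise the argument I say that an order $A$ \emph{defeats} an order $N$ if $A'\not\strong N$ for every convex $A'\subseteq A$ with more than one point. The key point is that defeating is preserved under products: if $A$ defeats both $M$ and $O$, then $A$ defeats $MO$ (with $O$ the dominant factor). This is immediate from Lemma \ref{fact:negativeproduct}: given a nontrivial convex $A'\subseteq A$, apply that lemma with inner factor $M$ and outer factor $O$, its two hypotheses being $A'\not\strong O$ (true since $A$ defeats $O$) and $A''\not\strong M$ for every nontrivial convex $A''\subseteq A'$ (true since each such $A''$ is also a nontrivial convex subset of $A$, which defeats $M$); the conclusion is $A'\not\strong MO$. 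By associativity of the product and induction on the number of factors, writing $\prod_{0\le i\le n}L_i=\big(\prod_{0\le i\le n-1}L_i\big)\cdot L_n$, I get that if $A$ defeats every $L_i$ then $A$ defeats $L$, and taking $A'=A$ this yields $A\not\strong L$.

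The remaining task is to verify that $D$ defeats each factor $L_i$. Let $D'\subseteq D$ be convex with more than one point; choosing $x<_D y$ in $D'$ gives $[x,y]_D\subseteq D'$ by convexity. By Lemma \ref{manypointsmanygaps} the interval $[x,y]_D$ has $2^{\aleph_0}$ gaps, each of which is internal (both sides are non-empty) and hence extends to a gap of $D'$; so $|G(D')|=2^{\aleph_0}$. Moreover $D'$ is uncountable, since $D$ is $2^{\aleph_0}$-dense. Now if $L_i$ is an uncountable factor, so that $|G(L_i)|<2^{\aleph_0}$, then $D'\strong L_i$ would give $G(D')\inj G(L_i)$ by the gap-comparison Proposition above, forcing $2^{\aleph_0}\le|G(L_i)|$, a contradiction; hence $D'\not\strong L_i$. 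If instead $L_i$ is countable, then there is no surjection from $L_i$ onto the uncountable $D'$, so again $D'\not\strong L_i$. In either case $D$ defeats $L_i$.

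Putting these together, $D$ defeats $L$, so in particular $D\not\strong L$. Since $L$ is uncountable, at least one factor $L_j$ must be uncountable, hence of the form with $\R\setminus\Q\inj L_j$; as a single factor embeds into the product, $D\inj\R\setminus\Q\inj L_j\inj L$. Thus $L$ has a suborder isomorphic to $D$ onto which it cannot surject, and $L$ is not strongly surjective. I expect the only delicate point to be the bookkeeping in the product-closure step: one must check that convexity and the \lq\lq more than one point\rq\rq\ condition are inherited correctly when Lemma \ref{fact:negativeproduct} is applied to subintervals, and that peeling the product one factor at a time respects the convention that the rightmost factor is dominant. The two small computations showing that $D$ defeats each factor (gap counting for the uncountable factors, cardinality for the countable ones) are then routine.
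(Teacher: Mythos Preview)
Your proof is correct and follows essentially the same approach as the paper: use the order $M$ from Lemma~\ref{manypointsmanygaps} as the common witness, argue by induction on the number of factors via Lemma~\ref{fact:negativeproduct}, and handle each individual factor by counting gaps (uncountable case) or by cardinality (countable case). The only organizational difference is that you package the induction hypothesis as the stronger ``defeats'' property and peel off the rightmost (dominant) factor, whereas the paper peels off the leftmost factor $L_0$ and keeps the weaker hypothesis $M\not\strong \prod_{1\le i\le n+1}L_i$; both variants go through for the same reasons.
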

\begin{proof}
Suppose that $L$ is uncountable, so at least one of the factors is
uncountable. Let $M$ be the order given by Lemma \ref{manypointsmanygaps}.
Since $M \inj \R \setminus \Q$ we have $M \inj L$. It then suffices to show
that $M \not\strong L$, and this can be done by induction on $n$.

For $n=0$, notice that $M \not\strong L$ since $|G(L)| < 2^{\aleph_0} =
|G(M)|$.

If the statement holds for $n$, let $L = \prod_{0\leq i\leq n+1} L_i$. If
$\prod_{1 \leq i \leq n+1} L_i$ is countable (by cardinality reasons), or by
inductive hypothesis, $M \not\strong \prod_{1\leq i\leq n+1} L_i$. Moreover,
if $M'$ is any convex subset of $M$ containing more than one point, then
$M'\nleq_sL_0$ either by cardinality reasons (if $L_0$ is countable) or by
the fact that $M'$ has more gaps than $L_0$ (if $L_0$ is uncountable). Now
apply Lemma \ref{fact:negativeproduct}.
\end{proof}

We already argued that $2^\N$ and $\Z^\N$ ordered lexicographically are not
strongly surjective. However the gap method does not apply to other natural
infinite lexicographic products, such as $\Q^\N$, $\R^\N$, and $(\R \setminus
\Q)^\N$, which have $2^{\aleph_0}$ many gaps. First we show that products
such as $\R^\N$ and $(\R \setminus \Q)^\N$ are not strongly surjective.

\begin{theorem}\label{lexprod}
For every $k \in \N$ let $L_k$ be a linear order with at least two elements
such that for every convex set $K \subseteq L_k$ we have $\Q \not\strong K$.
Then $\Q \not\strong \prod_{k \in \N} L_k$, where the product is ordered
lexicographically.
\end{theorem}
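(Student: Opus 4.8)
The plan is to argue by contradiction: suppose $\varphi\colon P\rao\Q$ is an epimorphism, where $P=\prod_{k\in\N}L_k$ carries the lexicographic order, and derive a contradiction by building an injection of $2^{\N}$ into $\Q$. For a finite sequence $t=(a_0,\dots,a_{n-1})$ with $a_k\in L_k$ write $[t]=\{x\in P\mid\forall k<n\ x_k=a_k\}$ for the corresponding cylinder; each $[t]$ is convex, and $[t]$ is the disjoint union of the cylinders $[t\conc a]$ ($a\in L_n$) arranged in the order of $L_n$, so $[t\conc a]<[t\conc a']$ whenever $a<_{L_n}a'$. As $\varphi$ is order preserving, $\varphi([t])$ is a convex subset of $\Q$; call $[t]$ \emph{fat} if $\varphi([t])$ has at least two points and \emph{thin} otherwise. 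The fat cylinders are closed under taking prefixes, and the root $[\es]=P$ is fat by surjectivity. I will use repeatedly that a convex subset $C\subseteq\Q$ with at least two points is equimorphic to $\Q$ and strongly surjective, whence $C\equiv_s\Q$, so that $C\strong X\iff\Q\strong X$ for every order $X$.

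Two observations use the hypothesis on the factors. First, every fat cylinder $[t]$ of length $n$ has a fat child: otherwise all $[t\conc a]$ are thin and the assignment $a\mapsto\varphi([t\conc a])$ is an order preserving surjection $L_n\rao\varphi([t])$, giving $\varphi([t])\strong L_n$, i.e.\ $\Q\strong L_n$, contradicting the hypothesis. By the same factoring, for any convex $A\subseteq L_n$ with all $[t\conc a]$ ($a\in A$) thin, the image $\varphi(\{x\in[t]\mid x_n\in A\})$ has at most one point (else $\Q\strong A$). The crux of the argument is the \emph{single-branch lemma}: there is no fat cylinder $[t]$ below which the fat cylinders form a single $\subseteq$-chain. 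To prove it, suppose the fat cylinders inside $[t]$ lie along one branch $b=(b_0,b_1,\dots)$ and set $J_m=\varphi([b\restriction m])$. At level $m$ the only fat child of $[b\restriction m]$ is $[b\restriction(m+1)]$, so by the observation the values taken on $\{x\in[b\restriction m]\mid x_m<b_m\}$ form at most one point lying $\le\inf J_{m+1}$, necessarily $\min J_m$, and symmetrically on the right; thus $J_m\setminus J_{m+1}\subseteq\{\min J_m,\max J_m\}$. Each $J_m$ therefore differs from $J_n$ by finitely many points and is dense, and since removing its minimum from a dense order leaves no minimum, a minimum (resp.\ maximum) is shed at most once overall. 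Hence $J_\infty=\bigcap_mJ_m$ is $J_n$ minus at most two endpoints, in particular infinite. Now fix $q$ strictly between $\inf J_\infty$ and $\sup J_\infty$: for every $m$ the thin children of $[b\restriction m]$ take values $\le\inf J_{m+1}<q$ or $\ge\sup J_{m+1}>q$, so any preimage of $q$ inside $[b\restriction m]$ lies in $[b\restriction(m+1)]$; iterating, $\varphi^{-1}(q)\cap[t]\subseteq\bigcap_m[b\restriction m]=\{b\}$. Thus $\varphi(b)=q$ for every such $q$, which is absurd since there are at least two of them. This is the step where both the density of $\Q$ and the hypothesis on the $L_k$ are essential, and it is the main obstacle.

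The lemma then does the real work. Applied to any fat cylinder it shows that the latter contains two incomparable fat sub-cylinders $[u]<[v]$. To force disjoint images, apply the lemma again inside $[u]$ to obtain incomparable fat $[u_1]<[u_2]\subseteq[u]$; since $[u_2]<[v]$ and $\varphi([u_2])$ is non-degenerate, $\sup\varphi([u_1])\le\inf\varphi([u_2])<\sup\varphi([u_2])\le\inf\varphi([v])$, so $\varphi([u_1])$ and $\varphi([v])$ are disjoint. Hence every fat cylinder splits into two incomparable fat sub-cylinders with disjoint images. Building a Cantor scheme $([t_s])_{s\in2^{<\N}}$ of fat cylinders with $[t_{s0}]<[t_{s1}]$, $\varphi([t_{s0}])\cap\varphi([t_{s1}])=\es$, and $\lh(t_{s0}),\lh(t_{s1})>\lh(t_s)$, each branch $x\in2^{\N}$ determines a single point $p_x\in\bigcap_n[t_{x\restriction n}]$, and the strict separation at the splitting node of two distinct branches forces $\varphi(p_x)\neq\varphi(p_y)$. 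This yields an injection $2^{\N}\rao\Q$, which is impossible. The degenerate behaviour that the naive peeling of one factor at a time cannot rule out — one only extracts $\Q\strong\prod_{k\ge n}L_k$ for every $n$, an infinite regress — is precisely the single-branch situation eliminated by the lemma.
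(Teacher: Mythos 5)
Your argument is sound in its architecture, but the auxiliary claim you announce you will ``use repeatedly'' is false as stated and has to be replaced by the weaker, true statement that the paper isolates as its opening remark. You claim that every convex $C\subseteq\Q$ with at least two points is strongly surjective, hence $C\equiv_s\Q$, hence $C\strong X\Leftrightarrow\Q\strong X$ for every order $X$. This fails whenever $C$ has an endpoint: $1+\Q$ is not strongly surjective (by Proposition \ref{basics2}(1), a strongly surjective order with a minimum is a well-order), and in fact $\Q\not\strong 1+\Q$, since an epimorphism from $1+\Q$ onto $\Q$ would send the minimum of $1+\Q$ to a minimum of $\Q$. Taking $X=C\simeq 1+\Q$ refutes exactly the implication $C\strong X\Rightarrow\Q\strong X$ that both of your observations rely on. The repair is short: under the hypothesis of the theorem, no convex $K\subseteq L_n$ surjects onto $1+\Q$, $\Q+1$ or $1+\Q+1$ either, because the preimage under such an epimorphism of the interior copy of $\Q$ (the image with its endpoints removed) is again a convex subset of $L_n$ and surjects onto $\Q$. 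With this substituted for your false claim, your two observations (every fat cylinder has a fat child; a convex block of thin children has image of size at most one) are correctly proved, and no other step of your proof invokes the claim, so the proof stands.

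Granting that repair, your route is genuinely different from the paper's, and in one respect more careful. The paper runs a direct recursion two coordinates at a time: inside a cylinder with non-degenerate image it finds a fat child $\ell_0$, then a second fat child $\ell_1$ on one side of it, then interleaves four fat grandchildren so that the two extreme ones have disjoint images --- the same buffer trick as your $[u_1]<[u_2]<[v]$ step. Your single-branch lemma is the new ingredient, and it earns its keep at exactly one point: to find $\ell_1$, the paper asserts that $\bigcup_{\ell>\ell_0}f(N_{h(s)\conc\ell})$ is isomorphic to one of $\Q$, $1+\Q$, $\Q+1$, $1+\Q+1$, but the inductive invariant it maintains (image convex with at least two points) does not force this: if $f(N_{h(s)})$ attains a maximum and $\ell_0$ is its unique fat child, that union can be a single point, and the same can happen on the left. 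This is precisely the ``all fat cylinders lie on one branch'' scenario, which your lemma excludes outright --- the preimage of any interior point of $\bigcap_m J_m$ is trapped in $\bigcap_m[b\res m]$, a single point of the product, while there are at least two such interior points because endpoints are shed at most twice. So your proof covers a degenerate case that the paper's recursion passes over in silence, at the cost of the extra bookkeeping in the lemma.
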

\begin{proof}
First of all notice that the hypothesis on $L_k$ implies that every convex
subset of $L_k$ does not surject onto $1+\Q$, $\Q+1$, or $1+\Q+1$.

Suppose $f:\prod_{k \in \N} L_k \rao \Q$ is an epimorphism. Our goal is to
define an embedding $g: 2^\N \rao \prod_{k \in \N} L_k$ such that $f$ is
injective on the range of $g$, thus reaching a contradiction because $\Q$ is
countable.

If $s \in \prod_{i<k} L_i$ for some $k \in \N$ we let $N_s = \{z \in \prod_{k
\in \N} L_k \mid s \subset z\}$. To define $g$ we define $h: 2^{<\N} \rao
\bigcup_{k \in \N} \prod_{i<k} L_i$ such that $h(s) \in \prod_{i<2k} L_i$
when $s \in 2^k$, $h(s) \subset h(t)$ when $s \subset t$, $f(N_{h(s)})$ is a
convex subset of $\Q$ with at least two elements, and $f(N_{h(s \conc 0)})
\cap f(N_{h(s \conc 1)})=\emptyset $. Then we set $g(x) = \bigcup_{k \in \N} h(x \res
k)$, so that it is immediate that if $x,y \in 2^\N$ are distinct then
$f(g(x)) \neq f(g(y))$.

The definition of $h(s)$ is by recursion on the length of $s$, starting from
$h(\es) = \es$. Thus we assume that $s$ has length $k$ and $h(s)$ is defined
respecting the conditions, so that $f(N_{h(s)})$ is isomorphic to one of
$\Q$, $1+\Q$, $\Q+1$, and $1+\Q+1$. Consider the map that sends $\ell \in
L_{2k}$ to $f(N_{h(s) \conc \ell}) \subseteq \Q$: since $f(N_{h(s)})
\not\strong L_{2k}$, for some $\ell_0 \in L_{2k}$ we have that $f(N_{h(s)
\conc \ell_0})$ is not a singleton. Since $L_{2k}$ has at least two elements
$\ell_0$ is either not the maximum or not the minimum of $L_{2k}$. Let us
assume it is not the maximum (otherwise we reason symmetrically). Then
$(\ell_0, {\rao})_{L_{2k}}$ is a nonempty convex subset of $L_{2k}$ and hence
does not surject onto $\bigcup_{\ell >_{L_{2k}} \ell_0} f(N_{h(s) \conc
\ell})$, which is isomorphic to one of $\Q$, $1+\Q$, $\Q+1$, and $1+\Q+1$.
Therefore we can find $\ell_1 >_{L_{2k}} \ell_0$ such that $f(N_{h(s) \conc
\ell_1})$ is also not a singleton. Notice that it might be that $f(N_{h(s)
\conc \ell_1})$ intersect $f(N_{h(s) \conc \ell_0})$ in a common
endpoint. Thus we go to the next level and find $\ell_{00} <_{L_{2k+1}}
\ell_{01} <_{L_{2k+1}} \ell_{10} <_{L_{2k+1}} \ell_{11}$ such that $f(N_{h(s)
\conc \ell_i \ell_{ij}})$ is not a singleton for every $i,j$. Consequently
$f(N_{h(s) \conc \ell_0 \ell_{00}}) \cap f(N_{h(s) \conc \ell_1 \ell_{11}}) =
\es$. Let $h(s \conc i) = h(s) \conc \ell_i \ell_{ii}$ for $i=0,1$.
\end{proof}

\begin{corollary}\label{cor:lexprod}
$\Q$ witnesses that $2^\N$, $\Z^\N$, $\R^\N$, and $(\R \setminus \Q)^\N$
ordered lexicographically are not strongly surjective.
\end{corollary}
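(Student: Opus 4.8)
The plan is to split the four orders into two groups according to which tool applies. The orders $2^\N$ and $\Z^\N$ have already been handled: by Corollary \ref{cor:gaps} each has strictly fewer than $2^{\aleph_0}$ gaps (indeed $2^\N$ is complete, so $|G(2^\N)|=0$, while $|G(\Z^\N)|=\aleph_0$), so $\Q \not\strong 2^\N$ and $\Q \not\strong \Z^\N$ follow at once. The remaining two orders $\R^\N$ and $(\R \setminus \Q)^\N$ have continuum many gaps, so the gap method cannot be applied to the product directly; for these I would invoke Theorem \ref{lexprod}.

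To apply Theorem \ref{lexprod} with $L_k = \R$ for every $k$ (respectively $L_k = \R \setminus \Q$), I must verify its two hypotheses: that each factor has at least two elements, which is clear, and that every convex subset $K$ of the factor satisfies $\Q \not\strong K$. The key observation is that Corollary \ref{cor:gaps} is now applied one level down, to the convex subsets of each factor rather than to the whole product. For $L_k = \R$, a convex subset is an interval of $\R$, and every interval of $\R$ is Dedekind complete and hence has no gaps, so $|G(K)| = 0 < 2^{\aleph_0}$. For $L_k = \R \setminus \Q$, a convex subset $K$ is the trace on $\R \setminus \Q$ of a real interval, and any cut of $K$ with no maximum below and no minimum above must be located at a rational point, since an irrational cut-point would itself lie in $K$ and realise one of the two sides; thus $|G(K)| \leq \aleph_0 < 2^{\aleph_0}$. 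In either case Corollary \ref{cor:gaps} yields $\Q \not\strong K$, so the hypotheses of Theorem \ref{lexprod} are met and we obtain $\Q \not\strong \R^\N$ and $\Q \not\strong (\R \setminus \Q)^\N$, as required.

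There is no genuine obstacle here, since Theorem \ref{lexprod} has already done the substantive work. The only point worth noticing is that the hypothesis of that theorem quantifies over \emph{all} convex subsets of each factor, so the gap count furnished by Corollary \ref{cor:gaps} must be checked uniformly across subintervals and not merely for the factor itself; the mild subtlety is the remark that passing to a convex subset of $\R \setminus \Q$ cannot create more than countably many gaps.
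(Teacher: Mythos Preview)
Your argument is correct, with one small omission: for $\Q$ to \emph{witness} that $\R^\N$ and $(\R\setminus\Q)^\N$ fail to be strongly surjective you also need $\Q\inj\R^\N$ and $\Q\inj(\R\setminus\Q)^\N$, which you never state. This is of course immediate (both orders are non-scattered, containing copies of $\R$ and $\R\setminus\Q$ respectively), but since the claim is that $\Q$ embeds yet does not lie below in the epimorphism order, it should be mentioned.

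On the route taken: the paper does not split into two groups but applies Theorem~\ref{lexprod} uniformly to all four products, simply checking that each of the factors $2$, $\Z$, $\R$, $\R\setminus\Q$ satisfies its hypothesis (every convex subset $K$ has $\Q\not\strong K$), and then noting that all four products are non-scattered. Your version instead cites Corollary~\ref{cor:gaps} directly for $2^\N$ and $\Z^\N$, which is perfectly legitimate since that corollary already covers them verbatim. The paper's uniform approach is marginally cleaner and makes the point that Theorem~\ref{lexprod} subsumes the earlier gap argument for these two orders; your approach avoids re-verifying what was already proved. The verification you give for the convex subsets of $\R$ and $\R\setminus\Q$ via gap counting is exactly what the paper has in mind (though the paper leaves it implicit).
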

\begin{proof}
First notice that $2$, $\Z$, $\R$, and $\R \setminus \Q$ satisfy the
condition imposed by Theorem \ref{lexprod} on the $L_i$'s. Then observe that
each of $2^\N$, $\Z^\N$, $\R^\N$, and $(\R \setminus \Q)^\N$ is
non-scattered.
\end{proof}

To show that $\Q^\N$ is not strongly surjective we must use a different
approach (obviously $\Q \strong \Q^\N$): we exploit the definability of
epimorphisms in certain settings.

\begin{theorem} \label{nocantor}
No uncountable Borel suborder of $2^{ \N }$, with the lexicographic order, is
strongly surjective.
\end{theorem}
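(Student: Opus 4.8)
The plan is to exploit the completeness of $(2^\N,<_{lex})$ to show that \emph{every} order-preserving map out of a Borel suborder of $2^\N$ is automatically Borel; an epimorphism onto a suborder $L$ then forces $L$ to be analytic, and a counting argument produces a non-analytic $L$ that no epimorphism can reach.

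First I would isolate two features of the ambient order. Each ray $\{y\in 2^\N\mid y<_{lex}x\}$ is open in the product topology, being $\bigcup_{n}\{y\mid y\restriction n=x\restriction n\wedge y(n)=0<1=x(n)\}$, and symmetrically for $\{y\mid y>_{lex}x\}$; conversely every cylinder $[s]$ is the closed interval $[s0^\infty,s1^\infty]$ and hence lies in the $\sigma$-algebra generated by the rays. As the cylinders generate the Borel $\sigma$-algebra of $2^\N$, the rays generate exactly $\mathrm{Borel}(2^\N)$. Second, $2^\N=2^\omega$ is complete: every subset has a supremum, by the greedy construction used in the proof of Corollary~\ref{cor:gaps}.

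The heart of the argument, and the step I expect to be the main obstacle, is the claim that if $X\subseteq 2^\N$ is Borel and $\varphi\colon X\to 2^\N$ is order-preserving then $\varphi$ is Borel. To see this, fix $x\in 2^\N$ and set $I=\varphi^{-1}(\{y<_{lex}x\})$; since $\varphi$ is order-preserving, $I$ is an initial segment of $X$. Let $s=\sup_{2^\N}I$, which exists by completeness. Every $z\in X$ with $z<_{lex}s$ fails to be an upper bound of $I$, hence (as $I$ is an initial segment) lies in $I$, while every element of $I$ is $\leq_{lex}s$; thus $I$ differs from the Borel set $\{z\in X\mid z<_{lex}s\}$ by at most the single point $s$, and is therefore Borel. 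Symmetrically $\varphi^{-1}(\{y>_{lex}x\})$ is Borel. The family $\{B\subseteq 2^\N\mid \varphi^{-1}(B)\text{ is Borel in }X\}$ is a $\sigma$-algebra containing all rays, hence — by the previous paragraph — all Borel sets, so $\varphi$ is Borel as claimed.

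Finally I would run the counting argument. If $L\subseteq X$ and $\varphi$ witnesses $L\strong X$, then $\varphi\colon X\to 2^\N$ is order-preserving with range $L$, so by the claim $\varphi$ is Borel and $L=\varphi(X)$ is analytic. Now an uncountable Borel set $X$ has the perfect set property, so $|X|=2^{\aleph_0}$ and $X$ has $2^{2^{\aleph_0}}$ subsets, whereas there are only $2^{\aleph_0}$ analytic subsets of $2^\N$. Hence some $L\subseteq X$ is not analytic; for this $L$ we have $L\inj X$ but $L\not\strong X$, so $X$ is not strongly surjective.
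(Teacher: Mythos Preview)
Your argument is correct and follows essentially the same route as the paper's: both show that every order-preserving map from a Borel $X\subseteq 2^\N$ into $2^\N$ is Borel, hence has analytic range, and then invoke a non-analytic suborder. The only presentational difference is that the paper first proves that the order topology on $2^\N$ coincides with the product topology and then observes that any convex subset of $2^\N$ is open plus at most two points, whereas you use completeness of $(2^\N,<_{lex})$ directly to pin an initial segment between a ray and a ray-plus-one-point; these are two phrasings of the same fact.
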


\begin{proof}
First, notice that the usual product topology and the order topology on $2^{
\N }$ coincide. Indeed, for any $s\in 2^{<\N}$, the basic open set $N_s=\{
x\in 2^{ \N }\mid s\subseteq x\} $ is open in the order topology:
\begin{itemize}
\item[{\bf -}] $N_{\es }=2^{ \N }$
\item[{\bf -}] if $s=s'01^h$ for some $h>0$, then
    $N_s=(s'01^{h-1}01^{\infty },s'10^{\infty })_{2^{ \N }}$; similarly if
    $s=s'10^h$ for some $h>0$
\item[{\bf -}] if $s=0^h$ for some $h>0$, then $N_s=(\leftarrow
    ,0^{h-1}10^{\infty })_{2^{ \N }}$; similarly if $s=1^h$ for some $h>0$
\end{itemize}
Conversely, given any $x\in 2^{ \N }$, fix $y\in (x,\rightarrow )_{2^{ \N }}$
and set $s=x\cap y$. Observe that $x\in N_{s0}$ and $y\in N_{s1}$, so
$N_{s1}\subseteq (x,\rightarrow )_{2^{ \N }}$, which implies that
$(x,\rightarrow )_{2^{ \N }}$ is open in Cantor space. Similarly one proves
that $(\leftarrow ,x)_{2^{ \N }}$ is open.

Now let $X\subseteq 2^{ \N }$, and fix any order-preserving function $f:X\to
2^{ \N }$. For any $s\in 2^{<\N}$ the subset $f^{-1}(N_s)$ of $X$ is convex,
so there exists a convex subset $A$ of $2^{ \N }$ such that
$f^{-1}(N_s)=A\cap X$. By the above, if $x\in A$ and $x$ is not an end point
of $A$, then $x$ is in the topological interior of $A$. This implies that $A$
is the union of an open set plus at most two points (its end points, if they
exist), so $A$ is Borel in $2^{ \N }$ and $f^{-1}(N_s)$ is Borel in $X$;
consequently, $f$ is Borel.

If $X$ is Borel in $2^{ \N }$, then $f(X)$ must be analytic; so if $X$ is
uncountable, then $X$ is not strongly surjective, since there exist
non-analytic subsets of $X$ onto which there can be no epimorphism.
\end{proof}

\begin{corollary} \label{noznqn}
$2^\N$, $\Z^\N$ and $\Q^\N$ ordered lexicographically are not strongly
surjective.
\end{corollary}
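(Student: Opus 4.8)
The order $2^\N$ is itself an uncountable Borel (indeed clopen) suborder of $2^\N$, so Theorem \ref{nocantor} immediately gives that it is not strongly surjective. For $\Z^\N$ and $\Q^\N$ the plan is to exhibit them as isomorphic to uncountable Borel suborders of $2^\N$ and again invoke Theorem \ref{nocantor}. Both are instances of the following claim: for any countable linear order $C$ with at least two elements, $C^\N$ ordered lexicographically is isomorphic to an uncountable Borel suborder of $2^\N$.

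First I would build a \emph{prefix-free order-preserving code} $\beta:C\to 2^{<\N}\setminus\{\es\}$, that is, a map such that no $\beta(c)$ is a prefix of $\beta(c')$ for $c\neq c'$, and such that $c<_C c'$ if and only if $\beta(c)<_{lex}\beta(c')$. Such a $\beta$ exists for every countable $C$: enumerating $C=\{c_0,c_1,\dots\}$, one places the codewords one at a time, at each step having a nonempty basic clopen interval into which $c_n$ must fall relative to the already-placed $c_0,\dots,c_{n-1}$, and splitting it so as to produce a new nonempty codeword in the correct relative position. This is a routine greedy construction, the codeword analogue of the standard embedding of a countable order into $\Q$.

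Using $\beta$ I would define $\Phi:C^\N\to 2^\N$ by $\Phi\big((c_i)_{i\in\N}\big)=\beta(c_0)\conc\beta(c_1)\conc\beta(c_2)\conc\cdots$; since each $\beta(c_i)$ is a nonempty finite string, this concatenation is a well-defined element of $2^\N$. I would then check that $\Phi$ is an order isomorphism onto its image: given distinct sequences $(c_i)$ and $(c'_i)$ with first difference at coordinate $i$, the two images share the common prefix $\beta(c_0)\conc\cdots\conc\beta(c_{i-1})$, and then, because $\beta(c_i)$ and $\beta(c'_i)$ are prefix-incomparable, the first position at which the two images differ lies inside this $i$-th block and is governed precisely by whether $\beta(c_i)<_{lex}\beta(c'_i)$, hence by whether $c_i<_C c'_i$. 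The image is uncountable because $C^\N$ has the cardinality of the continuum and $\Phi$ is injective.

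The step requiring the most care, and the one I expect to be the main obstacle, is to verify that $\Phi(C^\N)$ is Borel. By prefix-freeness the parsing of any $x\in 2^\N$ into codewords is unique when it exists, so $x\in\Phi(C^\N)$ if and only if, starting at position $0$, one can repeatedly strip off a (necessarily unique) codeword prefix and continue forever. For each $k$, the condition ``some string $w_0\conc\cdots\conc w_{k-1}$ with all $w_j$ in the range of $\beta$ is a prefix of $x$'' depends only on a finite prefix of $x$ and is therefore open, so the requirement that the parse succeeds for every $k$ is a $\mathbf{\Pi}^0_2$, hence Borel, condition cutting out exactly $\Phi(C^\N)$. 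This is where the only genuine subtlety lies, since $\beta$ has infinite range when $C=\Z$ or $C=\Q$ and so $\Phi(C^\N)$ need not be closed. Taking $C=\Z$ and $C=\Q$ then yields uncountable Borel copies of $\Z^\N$ and $\Q^\N$ inside $2^\N$, and Theorem \ref{nocantor} finishes the proof.
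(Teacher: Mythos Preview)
Your argument is correct. The construction of a prefix-free order-preserving code $\beta:C\to 2^{<\N}\setminus\{\es\}$ works exactly as you describe, provided one maintains the invariant that between any two consecutive placed codewords (and beyond the current extremes) there is an unused basic cylinder; placing $\beta(c_n)$ as, say, $u10$ inside the available cylinder $N_u$ preserves this invariant. The verification that $\Phi$ is an order embedding and that its image is $\mathbf{\Pi}^0_2$ is right; one minor phrasing issue is that ``depends only on a finite prefix'' would literally mean clopen, whereas the $k$-th condition is merely open (a countable union of clopens), but your stated conclusion is the correct one.

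The paper takes a genuinely different route. Rather than building the embedding by hand, it observes that $\Z^\N$ naturally sits inside $\Q^\N$ as a Borel suborder, and then appeals to the main theorem of \cite{louveauold}, which says that a Borel linear order Borel-embeds into $(2^\N,\leq_{lex})$ exactly when it does not contain an order-copy of $2^{\omega+1}$. The paper then checks $2^{\omega+1}\not\inj\Q^\N$ by noting that $2^{\omega+1}$ has uncountably many pairs of consecutive points while $\Q^\N$ is dense and order-separable (every nonempty $\leq_{lex}$-interval contains a nonempty basic open set for the product topology), so it cannot host uncountably many pairwise disjoint open intervals. Your approach is more elementary and self-contained, produces the embedding explicitly, and yields the general statement for $C^\N$ with $C$ any countable order of size at least two; the paper's approach is shorter once Louveau's dichotomy is granted and situates the result within a known structural theory of Borel linear orders.
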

\begin{proof}
It is enough to show that $\Z^\N$ and $\Q^\N$, endowed with the product
topology on the discrete topology, Borel embed order preservingly in $2^\N$
ordered lexicographically. Since the natural inclusion is a Borel order
preserving embedding of $\Z^\N$ into $\Q^\N$, by the main theorem of
\cite{louveauold} it is enough to prove that $2^{\omega +1} \not \inj \Q^\N$.
Notice that $2^{\omega +1}$ has uncountably many pairs of consecutive points,
so if $2^{\omega+1} \inj \Q^\N$ then $\Q^\N$, being dense, should have
uncountably many pairwise disjoint open intervals. However this is not the
case, as every $\leq_{lex}$-open interval in $\Q^\N$ contains a non-empty
open subset in the Polish topology of $\Q^\N$.
\end{proof}

The argument of Theorem \ref{nocantor} cannot be extended to suborders of
$2^{\alpha}$ for $\omega +1 \leq \alpha <\omega_1$, since on $2^{\alpha}$ the
order topology and the product topology do not coincide. In fact, for $\omega
+1 \leq \alpha <\omega_1$, there are more order-preserving functions from
$2^{\alpha}$ into itself than Borel maps with respect to the product
topology. To see this, for every $A \subseteq 2^{\N} $ consider the function
$\varphi_A: 2^{\alpha} \to 2^{\omega+1}$ defined by letting
\[
\varphi_A(x)=
\begin{cases}
(x \res \omega) \conc 0 & \text{if } x \res \omega \in A; \\
(x \res \omega) \conc 1 & \text{if } x \res \omega \notin A.
\end{cases}
\]
The function $\varphi_A$ is order-preserving, and $\varphi_A \neq \varphi_B$
whenever $A \neq B$. Since $2^{\omega +1}$ is isomorphic to a suborder of
$2^{\alpha }$, this shows that there are at least $2^{2^{\aleph_0}}$
order-preserving functions from $2^{\alpha }$ into itself.

\subsection{Beyond ZFC}\label{unctblessorders}
In contrast with the negative results that we showed so far, we now build an
uncountable strongly surjective order under extra set theoretic assumptions.
For $\kappa$ an infinite cardinal less than the continuum, consider the
statement
\begin{center}
$\mathrm{BA}_{\kappa}$: up to isomorphism, there is a unique $\kappa$-dense
suborder of $\R$.
\end{center}

We know that $\mathrm{BA}_{\aleph_0}$ holds in ZFC, while the consistency of
$\mathrm{BA}_{\aleph_1}$ with ZFC was proved in \cite{Baumga1973}. Moreover,
$\mathrm{BA}_{\aleph_1}$ follows from PFA. The interest for
$\mathrm{BA}_{\aleph_2}$ was rekindled recently, as witnessed by \cite{MT}.
Itay Neeman recently announced a proof of the consistency of
$\mathrm{BA}_{\aleph_2}$ from large cardinals.

\begin{theorem}\label{underPFA}
Let $\kappa $ be an uncountable cardinal smaller than the continuum. Assume
$\mathrm{BA}_{\kappa }$. Then there exist strongly surjective orders of
cardinality $\kappa $.
\end{theorem}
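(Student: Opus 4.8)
The plan is to show that the $\kappa$-dense suborder of $\R$ — which exists and, by $\mathrm{BA}_{\kappa}$, is unique up to isomorphism — is itself strongly surjective. Call it $M$; being $\kappa$-dense it has no endpoints, cardinality $\kappa$, cofinality $\omega$ and coinitiality $\omega^{\star}$, and each of its open intervals and rays is again $\kappa$-dense, hence isomorphic to $M$. Since $M \subseteq \R$, any $L$ with $L \inj M$ is isomorphic to a suborder of $\R$ of size at most $\kappa$, so it suffices to prove $M \strong L$ for every such $L$. I would do this by exhibiting a $\kappa$-dense order $N$ with $L \inj N$ and $N \strong L$: then $N \simeq M$ by $\mathrm{BA}_{\kappa}$, and $M \simeq N \strong L$ yields $M \strong L$.

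To build $N$ I would perform a \emph{blow-up} of $L$. Fix a countable $D \subseteq L$ that is dense in $L$; as $L \subseteq \R$ such a $D$ exists, and it is then dense in the Dedekind completion of $L$ as well. Let $N$ be the ordered sum obtained from $L$ by replacing each $d \in D$ by a copy of $M$ immediately followed by $d$, by inserting a copy of $M$ inside each jump of $L$ (there are only countably many jumps, as they correspond to pairwise disjoint intervals of $\R$), and by adjoining a copy of $M$ below and/or above when $L$ has a minimum and/or a maximum; every point of $L$ is kept. The epimorphism $N \strong L$ is then obtained by collapsing each inserted copy of $M$ (and each adjoined tail) to its adjacent point of $L$, while fixing $L$ pointwise: the fibres are convex and ordered like $L$, so this is an order-preserving surjection onto $L$.

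The heart of the matter — and the step I expect to be most delicate — is verifying that $N$ is genuinely $\kappa$-dense, so that $\mathrm{BA}_{\kappa}$ applies. Separability is preserved because only countably many separable pieces were inserted, so $N$ again embeds in $\R$; the insertions remove all jumps and guarantee that every interval of $N$ straddling a point of $L$ contains a whole copy of $M$, hence $\kappa$ points, while the adjoined tails remove the endpoints, and $|N| = \kappa$ bounds the count from above. The subtle point is that one must \emph{not} fill the gaps of $L$: each gap of $L$ is realized in $\R$ by a single point, and a point of $N$ sitting in a gap of $L$ could not be sent order-preservingly into $L$ — this is exactly the obstruction behind $\R \not\strong \Q$, where every cut of $\R$ straddling a gap of $\Q$ is realized. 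Because we blow up only the countable dense set $D$, every gap of $L$ survives as a genuine gap of $N$ (the lower side ends without a maximum and the upper side begins without a minimum), while $\kappa$-density still holds on both sides of each gap. This is precisely where separability forces a countable rather than a pointwise blow-up, and where the uniqueness granted by $\mathrm{BA}_{\kappa}$ is indispensable.
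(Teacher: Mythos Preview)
Your proposal is correct and follows essentially the same route as the paper: both show that the unique $\kappa$-dense $X \subseteq \R$ is strongly surjective by, given any $L \inj X$, replacing the points of a countable dense subset of $L$ by copies of $X$ to obtain a $\kappa$-dense order $Y$ admitting an obvious epimorphism onto $L$, and then invoking $\mathrm{BA}_\kappa$ to identify $Y$ with $X$. The only cosmetic differences are that the paper absorbs the treatment of jumps and endpoints into its choice of the countable dense set (rather than handling them separately as you do), and it verifies $Y \inj \R$ by explicitly realising the completion of $Y$ as a copy of $\R$ rather than via your separability sketch.
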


We prove in fact the following.

\begin{proposition} \label{strsurR}
Let $\kappa$ be an uncountable cardinal smaller than the continuum. Suppose
that, up to isomorphism, $X$ is the unique $\kappa$-dense suborder of $\R$.
Then $L\strong X$ for any $L \subseteq \R$ with $|L|\leq \kappa$.
\end{proposition}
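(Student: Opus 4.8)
## Proof Plan

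The plan is to show that $X$ is strongly surjective by verifying Definition~\ref{thedefinition}: given any $L \inj X$, we must produce an epimorphism $X \to L$. Since $X$ is $\kappa$-dense in $\R$, it has no endpoints and is non-scattered, so it suffices to build an order-preserving surjection onto an arbitrary $L \subseteq \R$ with $|L| \leq \kappa$. The key idea is to partition $X$ into $\kappa$-dense convex pieces indexed by $L$ itself, using the uniqueness hypothesis $\mathrm{BA}_{\kappa}$ to recognize those pieces as copies of $X$.

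First I would handle the case where $|L| \le \kappa$ but $L$ may be scattered, countable, or have endpoints — reducing to the main case. Note that $1 + X + 1$ surjects onto $L$ whenever $X$ does onto the ``open interior'', and more importantly, for \emph{any} $L \subseteq \R$ we may extend to $L' = \Q + L\cdot(1+\Q) + \dots$; but the cleaner approach is: it is enough to find, inside $X$, a family $(X_\ell)_{\ell \in L}$ of pairwise disjoint non-empty convex subsets, unbounded above and below in $X$, with $\bigcup_\ell X_\ell = X$, such that each $X_\ell$ admits an epimorphism onto the singleton $\{\ell\}$ — which is automatic. Then the map sending $x \in X_\ell$ to $\ell$ is the desired epimorphism, provided the blocks are genuinely convex and exhaust $X$. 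So the whole problem reduces to: \emph{partition $X$ into $|L|$-many non-empty convex blocks arranged in order type $L$.}

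To build this partition I would work inside $\R$. Enumerate $L = \{\ell_\alpha : \alpha < \kappa\}$ (if $|L| < \kappa$, pad harmlessly, or argue $L$ is then countable and a direct back-and-forth suffices). The main construction is a transfinite recursion of length $\kappa$ assigning to each point of $X$ (enumerated as $\{x_\xi : \xi < \kappa\}$) a ``target'' in $L$, maintaining order-preservation and making sure every $\ell \in L$ eventually receives a point — in fact receives $\kappa$ points and an interval's worth of them. Concretely, at stage $\xi$ I decide $\sigma(x_\xi) \in L$ so that it is consistent with all earlier decisions (the set of admissible values is a non-empty convex subset of $L$ since $L \subseteq \R$ and the earlier-decided points form a finite, hence ``thin'', constraint), and I also make a \emph{promise} that some unfilled $\ell$ is handled. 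The delicate point is surjectivity together with convexity of each fiber: I would interleave a second bookkeeping recursion that, at stage $\xi$, picks the least-indexed $\ell \in L$ not yet in the range and forces $\sigma$ to take value $\ell$ on a new point of $X$ lying in the (necessarily $\kappa$-dense, non-empty) convex region of $X$ that the previous commitments have reserved for $\ell$. Here is where $\mathrm{BA}_{\kappa}$ enters: this reserved region is a convex subset of $X \subseteq \R$ which is $\kappa$-dense with no endpoints, hence isomorphic to $X$, hence non-empty and itself $\kappa$-dense, so the recursion never gets stuck and every fiber $\sigma^{-1}(\{\ell\})$ is a non-empty convex subset of $X$.

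The main obstacle I expect is precisely ensuring that the fibers $\sigma^{-1}(\{\ell\})$ are all \emph{non-empty} and \emph{convex} simultaneously with $\sigma$ being total and order-preserving: a naive back-and-forth could leave a fiber empty (if $L$ is dense, cofinally many $\ell$'s are ``squeezed out'') or could fragment a fiber. The uniqueness hypothesis $\mathrm{BA}_{\kappa}$ is the essential tool that resolves this, because it guarantees that every non-trivial convex subregion of $X$ we carve out is again $\kappa$-dense in $\R$ and therefore order-isomorphic to $X$ — in particular has size $\kappa$ and no endpoints — so there is always ``room'' to place a fresh preimage for any $\ell$ whose slot is still open. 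Once the partition $(X_\ell)_{\ell \in L}$ is in hand, $L \strong X$ is immediate, and since $L \inj X$ was arbitrary, $X$ is strongly surjective. Finally, Theorem~\ref{underPFA} follows since under $\mathrm{BA}_\kappa$ such an $X$ exists, has cardinality $\kappa$, and embeds every $L \subseteq \R$ of size $\le \kappa$ — in particular every suborder of $X$.
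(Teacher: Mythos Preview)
Your approach is genuinely different from the paper's, and it has a real gap at limit stages of the recursion.

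You assert that at stage $\xi$ the admissible set of values for $\sigma(x_\xi)$ is a non-empty convex subset of $L$ because ``the earlier-decided points form a finite, hence `thin', constraint''. But at any infinite stage $\xi$ there are $|\xi|$ many earlier commitments, not finitely many. Concretely, suppose by stage $\omega$ you have committed $\sigma(a_n)=\ell_n$ and $\sigma(b_n)=\ell'_n$ with $a_n$ increasing, $b_n$ decreasing, $a_n<b_m$ for all $n,m$, and $(\ell_n)$, $(\ell'_n)$ approaching a gap of $L$ from either side. If some later $x_\xi$ lies between all the $a_n$ and all the $b_m$, there is no admissible value in $L$ for $\sigma(x_\xi)$. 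The symmetric problem hits the ``Back'' step: the region of $X$ reserved for a given $\ell$ is $\{x\in X: A<x<B\}$ for certain $A,B\subseteq X$ of size $<\kappa$, and this set can be empty. Indeed, since $|X|=\kappa<2^{\aleph_0}$, the order $X$ is \emph{not} $\aleph_1$-saturated: there always exist countable $A<B$ in $X$ with no point of $X$ strictly between them (otherwise one embeds $\R$ into $X$ by filling every rational cut). So the recursion can get stuck. Your invocation of $\mathrm{BA}_\kappa$ does not help here: you use it to conclude that a reserved region, \emph{once known to be} $\kappa$-dense without endpoints, is isomorphic to $X$ --- but the issue is precisely whether that region is non-empty, which is logically prior. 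In effect, your argument never uses $\mathrm{BA}_\kappa$ in an essential way, yet the statement is not a theorem of ZFC.

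The paper avoids the back-and-forth entirely. It picks a countable $M\subseteq L$ dense in $L$ (and containing all endpoints and points with immediate neighbours), sets $Y=\sum_{l\in L}Y_l$ with $Y_l\simeq X$ for $l\in M$ and $Y_l$ a singleton otherwise, and shows directly that $Y$ is $\kappa$-dense and embeds in $\R$ (by embedding it in a suitable $L$-sum of copies of $\R$ and singletons whose completion is separable, hence $\simeq\R$). Then $\mathrm{BA}_\kappa$ gives $Y\simeq X$ in one stroke, and the obvious projection $Y\to L$ collapsing each $Y_l$ to $l$ yields the epimorphism. So the hypothesis is applied once, globally, rather than stage-by-stage.
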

\begin{proof}
Let $M\subseteq L$ be a countable dense set in the order topology of $L$
containing all points having an immediate successor or an immediate
predecessor in $L$ and the endpoints of $L$ if they exist. Let $Y=\sum_{l\in
L}Y_l$, where $Y_l$ is isomorphic to $X$ if $l\in M$, and a singleton
otherwise. It is then enough to show that $Y$ is isomorphic to $X$. It is
easily checked that $Y$ is $\kappa $-dense, so it remains to check that $Y
\inj \R$.

For each $l\in M$ let $Z_l \simeq \R$ be a linear order containing $Y_l$. If
$l\in L\setminus M$, let $Z_l=Y_l$. Finally, let $Z$ be the completion of
$\sum_{l\in L}Z_l$. The proof will be concluded by showing that $Z \simeq
\R$.

By construction, $Z$ does not have minimum nor maximum and it is complete. To
apply the classical characterization of the order type of $\R$ (\cite{Rosens1982}, Theorem
2.30) it remains to show that $Z$ is separable. For each $l\in
M$, pick a countable dense subset $Q_l$ of $Z_l$. Then $\sum_{l\in M}Q_l$ is
dense in $Z$.
\end{proof}

Applying Corollary \ref{multiple}, one obtains that if $X$ is the order
provided by $\mathrm{BA}_{\kappa }$, then each $X^n$ is a strongly surjective
order. These are in fact distinct order types, more precisely the following
holds.

\begin{proposition}
Assume $\mathrm{BA}_{\kappa }$ and let $X$ witness it. Then
\[
X<_iX^2<_iX^3<_i\ldots\mbox{ and }X<_sX^2<_sX^3<_s\ldots
\]
\end{proposition}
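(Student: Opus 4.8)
The plan is to reduce the whole statement to the single inequality $X^{n+1}\not\inj X^{n}$ for every $n\ge 1$. The embeddings $X^{n}\inj X^{n+1}$ are immediate: writing $X^{n+1}=X^{n}X$ as $X$-many convex copies $S_b\simeq X^{n}$ of $X^{n}$ indexed by $b\in X$, fixing any $x_0\in X$ makes the slice $S_{x_0}$ a copy of $X^{n}$ inside $X^{n+1}$. Granting the strictness $X^{n+1}\not\inj X^{n}$, the $\strong$-chain follows as well. Indeed, since $X$ is strongly surjective by Proposition~\ref{strsurR}, Corollary~\ref{multiple} gives that every $X^{n}$ is strongly surjective; together with $X^{n}\inj X^{n+1}$ this yields an epimorphism $X^{n+1}\to X^{n}$, i.e.\ $X^{n}\strong X^{n+1}$. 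Conversely, were $X^{n+1}\strong X^{n}$ we would get $X^{n+1}\inj X^{n}$ (recall that under the axiom of choice $L\strong M$ implies $L\inj M$), contradicting strictness. So everything comes down to $X^{n+1}\not\inj X^{n}$.

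To prove this I would introduce a rank measuring a kind of order-theoretic dimension. For a linear order $Y$, declare $r(Y)\ge 0$ iff $Y\neq\es$, and recursively $r(Y)\ge\ell+1$ iff $Y$ contains uncountably many pairwise disjoint non-empty open intervals each of rank $\ge\ell$; let $r(Y)$ be the largest such $\ell$. The crucial input is that $X$, being a suborder of $\R$, is order-separable and hence has no uncountable family of pairwise disjoint non-empty open intervals; the same holds for any of its open intervals. I would first check that $r$ is monotone under embeddings of \emph{dense} orders: if $K'\inj K$ with $K'$ dense, then $r(K')\le r(K)$. The point is that in a dense order two disjoint non-empty open intervals $(p_0,q_0)$ and $(p_1,q_1)$ with $(p_0,q_0)<(p_1,q_1)$ satisfy $q_0\le p_1$, so they are separated by a point of $K'$; an embedding sends this separating point to one separating the image intervals, which therefore remain disjoint, and the recursion goes through.

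Next I would compute $r(X^{n})=n-1$ by induction on $n$, which finishes the proof: monotonicity (all the $X^{n}$ are dense) would turn a hypothetical $X^{n+1}\inj X^{n}$ into $n=r(X^{n+1})\le r(X^{n})=n-1$, absurd. Using the slice decomposition $X^{n+1}=X^{n}X$ with slices $S_b\simeq X^{n}$, the \emph{lower bound} carries along an open interval of $X^{n}$ of rank $\ge n-1$: copying it inside each of the uncountably many slices $S_b$ produces uncountably many pairwise disjoint rank-$(n-1)$ intervals, so $r(X^{n+1})\ge n$, and placing them inside a single interval spanning two slices keeps a witnessing open interval for the next step. For the \emph{upper bound}, any open interval of rank $\ge n$ cannot sit inside one slice (there it would embed into $X^{n}$ and have rank $\le n-1$), hence it meets two slices and, by convexity, contains a whole block of slices $S_c$ for $c$ ranging over a non-empty open interval of the index copy of $X$; uncountably many pairwise disjoint intervals of rank $\ge n$ would thus yield uncountably many pairwise disjoint open intervals of $X$, contradicting its separability. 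So $r(X^{n+1})\le n$.

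The hard part is setting up $r$ so that it is genuinely monotone under embeddings: a carelessly stated rank fails, because an embedding into a larger order can create room between formerly disjoint intervals and so spuriously merge them. This is precisely where density is used, and it is the reason the argument is confined to the dense orders $X^{n}$; the second, easier, use of separability is the countable-chain-condition bound that drives the upper estimate. Everything else — the two reductions at the start and the two halves of the induction — is routine once the invariant and its monotonicity are in place.
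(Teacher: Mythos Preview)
Your proposal is correct and takes a genuinely different route from the paper. Both arguments reduce to showing $X^{n+1}\not\inj X^n$ and both ultimately rest on the single fact that $X$, as a suborder of $\R$, has the countable chain condition. But where you package this via a dimension-like rank $r$ with $r(X^n)=n-1$ and a monotonicity lemma for embeddings of dense orders, the paper argues directly by induction: assuming an embedding $f\colon X^{n+2}\to X^{n+1}=X^n\times X$, it writes $f=(g,h)$, observes that if some slice $\{a\}\times X^2$ is sent injectively by $h$ into $X$ one already contradicts $X^2\not\inj X$, and otherwise finds a nondegenerate interval of $X^2$ on which $h$ is constant, forcing $g$ to embed $X^n\times(\text{that interval})$ into $X^n$ and hence $X^{n+1}\inj X^n$, contradicting the inductive hypothesis. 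Your approach yields a reusable invariant that explains \emph{why} the powers are distinct (different ``dimension''), and it would adapt immediately to any $\kappa$-dense $X\subseteq\R$ without further thought; the paper's approach is shorter and avoids setting up and verifying the monotonicity of an auxiliary rank, but is more ad hoc. Both are clean; yours has the advantage of separating the combinatorial core (ccc of $X$) from the bookkeeping.
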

\begin{proof}
Since $X^n \strong X^{n+1}$, it is enough to show inductively $X^{n+1}
\not\inj X^n$. First notice that $X^2 \not\inj X$, because
$X^2$ contains an uncountable family of pairwise
disjoint open intervals, while $X$ does not.

Now assume
$X^{n+1} \not\inj X^n$, and suppose towards a contradiction that $f: X^n
\times X^2 \to X^n \times X$ witnesses $X^{n+2} \inj X^{n+1}$. Write
$f(a,x)=(g(a,x), h(a,x))$. If for some $a \in X^n$ the map $x \mapsto h(a,x)$
is injective then it witnesses $X^2 \inj X$, contradicting the base step of
our induction. Hence for all $a \in X^n$ there exist distinct $x,y \in X^2$
such that $h(a,x) = h(a,y)$. Fix $a \in X^n$, pick $x=(x_0,x_1)$ and
$y=(y_0,y_1)$ with this property, and such that $x <_{X^2} y$. For all $z \in
(x,y)_{X^2}$ and $b \in X^n$ we have $(a,x) <_{X^{n+2}} (b,z) <_{X^{n+2}}
(a,y)$ and hence $h(b,z) = h(a,x)$. This implies that $(b,z) \mapsto g(b,z)$
is injective and witnesses $X^n \times (x,y)_{X^2} \inj X^n$. Now notice that
$X \inj (x,y)_{X^2}$: this is clear if $x_1 <_X y_1$, while when $x_1 = y_1$
it follows from the fact that $(x_0,y_0)_X$ is $\kappa$-dense, and hence
isomorphic to $X$. Putting all together, we have shown that $X^{n+1} \inj
X^n$, against the induction hypothesis.
\end{proof}

In Theorem \ref{underPFA} we proved the existence of uncountable strongly
surjective orders under a consequence of PFA. So it is natural to try to
build strongly surjective orders under quite orthogonal principles, like
$\Diamond$ and its variations. To this end it appears that the following
notions, with roots in \cite{baumgartner1981}, are relevant.

\begin{definition}
A partial order $(T,{\leq_T})$ (often denoted only by $T$) is a \emph{tree}
if for all $t \in T$ the initial interval $({\lao}, t[_T$ has order-type an
ordinal $\alpha$ called the \emph{length} of $t$. The set of nodes of length
$\alpha$ is the \emph{$\alpha$th level} of $T$: we denote it by $T^\alpha$.
The \emph{height} of $T$ is the smallest ordinal $\alpha$ such that
$T^\alpha$ is empty. Moreover we say that $s,t \in T$ belong to the same
\emph{brotherhood} of $T$ if $({\lao}, t[_T\, = ({\lao}, s[_T$.

If every brotherhood of $T$ is linearly ordered, then we can order $T$ by the
lexicographic order we denote by $\leq_{lex}$ and, following Baumgartner
\cite{baumgartner1981}, we say that $T$ is \emph{doubly ordered}.

Two doubly ordered trees $T$ and $S$ are isomorphic (and we write $(T,
{\leq_T}, {\leq_{lex}}) \cong (S, {\leq_S}, {\leq_{lex}})$) if there exists a
bijection $f:T \to S$ which preserves both the partial and the linear orders.
\end{definition}

\begin{definition}\label{def:Baumtr}
A doubly ordered tree $(T,{\leq_T}, {\leq_{lex}})$ is a \emph{Baumgartner
tree} if the following conditions hold:
\begin{itemize}
\item $(T,{\leq_T})$ is a \emph{Suslin} tree (that is, every chain and
    every antichain in $T$ is countable, the height of $T$ is $\omega_1$,
    and for all $t \in T^\alpha$ and all $\beta<\omega_1$ with $\alpha \leq
    \beta$ there exists $s \in T^\beta$ such that $t \leq_T s$).
\item $T$ has \emph{rational brotherhoods}, that is the ordering of each
    brotherhood of $T$ is isomorphic to $\Q$.
\item for every $X \in [T]^{\aleph_1}$, if
\begin{itemize}
 \item $X$ is cofinal with respect to $\leq_T$ in $\{t \in T \mid \exists
     s \in X (s \leq_T t)\}$,
 \item $X$ has a rational basis, that is  the ordering of its minimal
     elements is isomorphic to $\Q$,
\end{itemize}
then we have $(X, {\leq_T}, {\leq_{lex}}) \cong (T, {\leq_T},
{\leq_{lex}})$.
\end{itemize}
\end{definition}

Assuming $\Diamond^+$, in Theorem 4.15 of \cite{baumgartner1981}, Baumgartner
claimed to build a minimal Specker type which was in fact the linear part of
what we call a Baumgartner tree. As pointed out by Hossein Lamei Ramandi,
Baumgartner's proof has however a gap: there are indeed many counterexamples
to the crucial Lemma 4.14, stated without proof.

Recently, D\'{a}niel Soukup (\cite{Soukup}, see Section 4) modified the proof
of Theorem 2.3 in \cite{HNS} to construct a Baumgartner tree under
$\Diamond^+$.

\begin{theorem}\label{prop:underdiamond}
The linear order of a Baumgartner tree is strongly surjective.
\end{theorem}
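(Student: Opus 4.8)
The plan is to verify directly that $M := (T, \leq_{lex})$ surjects order-preservingly onto each of its suborders, treating countable and uncountable suborders separately after recording a few structural facts. First I would note that the $0$th level $T^0$ is a single brotherhood (all its nodes have empty set of predecessors), hence isomorphic to $\Q$, and that the root projection $\pi_0 \colon M \to T^0$, sending each $t$ to its length-$0$ predecessor, is order preserving and onto; thus $\Q \strong M$. Since every cone sits $\leq_{lex}$-convexly over its root, $T^0$ is coinitial and cofinal in $M$, so $M$ has coinitiality $\omega^\star$ and cofinality $\omega$ and is admissible; being (the line of) a Suslin tree it is ccc, and from the rational brotherhoods together with normality one checks that $M$ is densely ordered, whence $M$ contains no copy of $\omega_1$ or $\omega_1^\star$ and is short. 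In particular every suborder of $M$ is admissible, which is what lets us apply Lemma \ref{definitionbypieces} and Proposition \ref{familymash} with such suborders as targets.

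For a countable suborder $Y \inj M$ the argument is immediate: compose the epimorphism $\pi_0 \colon M \to \Q$ with an epimorphism $\Q \to Y$, which exists because every countable linear order is an epimorphic image of $\Q$. Hence $Y \strong M$.

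The heart of the proof is the case of an uncountable suborder $Y \subseteq T$ (necessarily of size $\aleph_1$, as $|T| = \aleph_1$). Here the plan is to produce an order-preserving surjection $M \to Y$ by exhibiting a self-similar copy of $T$ that retracts onto $Y$. Concretely, using the rational brotherhoods to insert a $\Q$'s worth of nodes at each relevant branching and Suslin normality to keep cones cofinal, I would enlarge $Y$ to a set $X$ with $Y \subseteq X \subseteq T$ that is cofinal with respect to $\leq_T$ in its upward closure and has a rational basis; clause (3) of Definition \ref{def:Baumtr} then gives $(X, \leq_T, \leq_{lex}) \cong (T, \leq_T, \leq_{lex})$, so $X \cong M$ as linear orders. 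At the same time I would arrange $Y$ to be coinitial, cofinal, and locally dense enough in $X$ that there is an order-preserving retraction $\rho \colon X \to Y$: the convex fibers $\rho^{-1}(y)$ are the $Y$-cells of $X$, each inserted node being pushed to a neighboring element of $Y$. Because $Y$ is admissible, the matching of $\omega$- and $\omega^\star$-cofinalities at the ends of these cells is exactly what Proposition \ref{familymash} and Lemma \ref{definitionbypieces} supply, so $\rho$ is a genuine epimorphism. Composing the isomorphism $M \cong X$ with $\rho$ yields $Y \strong M$, completing the uncountable case and hence the theorem.

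The main obstacle is the construction of the envelope $X$ together with the retraction $\rho$, and in particular the coherence of the recursion at limit levels of $T$. One must add nodes so that $X$ simultaneously meets both hypotheses of the self-similarity clause --- cofinality in its upward closure and a rational basis --- while not creating cuts that leave some node of $X$ stranded in a gap of $Y$ with no convex image; this is precisely where the cofinality-in-the-upward-closure clause of Definition \ref{def:Baumtr} and the countable chain condition coming from Suslinity do the real work, guaranteeing that at every stage the data remain nice coverings so that the definition by pieces applies. I expect the bookkeeping of these limit stages, rather than any single global idea, to be the delicate part.
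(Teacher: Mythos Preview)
Your handling of countable suborders matches the paper's. For uncountable $Y$, however, the paper takes a different and considerably simpler route, and your plan has a gap at the retraction step.

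The paper never enlarges $Y$. Writing $L = (T,\leq_{lex})$, it sets $Y^\circ = \{s \in Y : Y \text{ is } \leq_T\text{-cofinal in the cone above } s\}$, takes $A$ to be the $\leq_T$-minimal elements of $Y^\circ$ and $B = Y \setminus Y^\circ$. Suslinity makes both $A$ and $B$ countable, and one checks that elements of $B$ are $\leq_T$-incomparable to all of $Y^\circ$, giving $(Y,\leq_{lex}) \simeq \sum_{s \in A \cup B} L_s$ with $L_s$ a singleton for $s \in B$. For $s \in A$ the set $\{t \in Y : s <_T t\}$ \emph{already} satisfies both hypotheses of the self-similarity clause (cofinality is built into the definition of $Y^\circ$; the rational basis is verified directly from the rational brotherhoods and normality), so it is doubly isomorphic to $T$ and $L_s \cong 1+L$. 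From $L \simeq L\Q$ one gets $LM \simeq L$ for every countable $M$; since $1+L \strong L2 \simeq L$, gluing summand by summand yields $(Y,\leq_{lex}) \strong L \cdot (A \cup B) \simeq L$. The Baumgartner clause is thus invoked once per element of $A$, never globally, and no transfinite construction is required.

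Your approach, by contrast, hinges on producing $X \supseteq Y$ with $X \cong T$ together with a retraction $\rho\colon X \to Y$, and this second step is not justified. Lemma~\ref{definitionbypieces} and Proposition~\ref{familymash} build epimorphisms only from families indexed by an interval of $\Z$; they do not manufacture a map whose fibers are the ``$Y$-cells of $X$'' indexed by an arbitrary countable order, so your citation of them does not establish $\rho$. More seriously, an order-preserving retraction $X \to Y$ requires every convex component of $X \setminus Y$ to be adjacent to some point of $Y$, and your construction does not ensure this. To give $X$ a rational basis you may be forced to insert a new minimal node $z$ --- carrying an entire uncountable cone, by the cofinality requirement --- into a $\leq_{lex}$-gap of $Y$ flanked on the left by a cone $\cong 1+L$ with no maximum and on the right by a descending union of such cones with no minimum; the cone above $z$ then has no $Y$-neighbor to retract to. One can try to argue such insertions are always avoidable, but carrying this out essentially reproduces the paper's cone-by-cone analysis of $Y$; once you know that $\{t \in Y : s \leq_T t\} \cong 1+L$ for every $s \in A$, the detour through an enlarged $X$ and a global retraction is unnecessary.
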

\begin{proof}
Fix a Baumgartner tree $(T, {\leq_T}, {\leq_{lex}})$ and let $L = (T,
\leq_{lex})$.

Observe first that, as $T$ has rational brotherhoods, $(T^0, \leq_{lex})
\simeq \Q$ holds. Hence $L$ can be written as a $\Q$-sum and $\Q \strong L$.
For any countable linear order $K$ we have $K \strong \Q$ and hence $K
\strong L$.

It remains to deal with the uncountable suborders of $L$. Given $X \sub T$
uncountable let $K= (X,\leq_{lex})$. Consider
\[
X^\circ=\{s \in X\mid X \text{ is cofinal in } \{t \in T \mid s \leq_T t\}\},
\]
and call $A$ the set of its $\leq_T$-minimal elements. As $A$ is an antichain
and $T$ is Suslin, $A$ is countable. Let $B = X\setminus X^\circ$: $B$ is not
cofinal in any set of the form $\{t \in T \mid s \leq_T t\}$ and hence, by a
well-known property of Suslin trees, it is countable. For $s \in B$ let $L_s$
be the singleton order, and for $s \in A$ let $L_s = 1+L$. Notice that for $s
\in A$ we have $L_s \simeq (\{t \in X \mid s \leq_T t\}, {\leq_{lex}})$ by
the last clause in the definition of Baumgartner tree. The set $\{t \in X
\mid s <_T t\}$ has indeed a rational basis $C$: take $t,t'\in C$ with
$t<_{lex}t'$ and call $s'=t\cap t'\geq_T s$. The second clause implies that
there are immediate successors $s_0<_{lex}s_1<_{lex}s_2<_{lex}s_3<_{lex}s_4$
of $s'$ such that $s_1\leq_T t$ and $s_3\leq_T t'$. Cofinality gives us then
$t_i\geq_T s_i$ in $C$ for $i=0,2,4$, so $C$ is densely ordered with no
extremes. As an antichain in $T$, $C$ is countable.

Thus we have $K \simeq \sum_{s \in A \cup B} L_s$, where the sum is taken
according to lexicographic order.
Since the $0$th level of $T$ is ordered as $\Q$ we have $L\simeq L\Q$, so for any countable linear order $M$ we have, as linear orders: $LM\simeq (L\Q)M\simeq L(\Q M)\simeq L\Q\simeq L$.
In particular $L_s\simeq1+L\strong L2 \simeq L$ for every $s \in A$, so we have $\sum_{s \in A \cup B} L_s
\strong LM$, where $M= (A \cup B, \leq_{lex})$. Since $M$ is countable, $LM \simeq L$.
Altogether we showed $K \strong L$, as needed.
\end{proof}

\subsection{Some directions for further research}
In this paper, we have given a fairly complete treatment of countable strongly
surjective linear orders. On the other hand, various problems regarding
uncountable strongly surjective linear orders have been left open. We discuss
briefly here some lines for further research in this direction.

\subsubsection{The existence of uncountable strongly surjective orders}
In the first draft of this paper, we left open three problems concerning strongly surjective orders. The main one
was: Does there exist an uncountable strongly surjective order in ZFC?
With two related questions: Does there exist an uncountable strongly surjective order under CH? Or
under $\Diamond$?

Upon learning about these problems, D\'{a}niel Soukup answered negatively the
second question, so a fortiori the first one: see Section 5 of
\cite{Soukup}.

The existence of a strongly surjective order under $\Diamond$ remains open. D\'{a}niel Soukup included a healthy list of open problems about uncountable strongly surjective orders in Section 6 of \cite{Soukup}.

\subsubsection{Definably strongly surjective orders}
In Theorem \ref{nocantor} we proved that no uncountable Borel suborder of
$2^{ \N }$ can be strongly surjective by using definability reasons: such an
order cannot surject onto a non-analytic suborder, since epimorphisms are
Borel. This suggests that there may be some Borel subsets of $2^\N$ for which
this is the only obstruction to strong surjectivity, as they admit
epimorphisms onto all their analytic suborders. Call such orders
\emph{definably strongly surjective}.

Corollaries \ref{cor:gaps} and \ref{cor:lexprod} (but not Corollary
\ref{noznqn}) show that $2^\N$ and $\Z^\N$ ordered lexicographically are not
definably strongly surjective.

\begin{question}
Do there exist definably strongly surjective orders that are not strongly
surjective? In particular, is $\Q^\N$ definably strongly surjective? Can the
concept of a definably strongly surjective order be extended beyond the Borel
suborders of $2^{ \N }$?
\end{question}

\section*{Acknowledgments.}
An early version of this work, and notably a boldface form of Theorem
\ref{theorem:boreluniformization}, was presented to the \lq\lq Groupe de
travail en th\'eorie descriptive des ensembles\rq\rq\ at University Paris 6.
We would like to thank the whole group for their patience, and Alain Louveau
for suggesting the lightface version of Theorem
\ref{theorem:boreluniformization}, and pointing out that it would yield
Corollary \ref{cor:scatstrongsurj}.

The research presented in this paper has been done while the first author was
visiting the Department of information systems of the University of Lausanne.
He wishes to thank the \'Equipe de logique, and in particular its director
prof. Jacques Duparc, for providing such a friendly environment.

The research of the second author was funded by a fellowship from the
Istituto Nazionale d'Alta Matematica (INdAM) and in part by FWF Grant P28153.

The research of the third author was supported by PRIN 2012 Grant \lq\lq
Logica, Modelli e Insiemi\rq\rq.

The paper was completed while all three authors were attending the workshop
\lq\lq Current trends in Descriptive Set Theory\rq\rq\ at the Erwin
Schr\"{o}dinger International Institute for Mathematics and Physics in
Vienna. The authors wish to thank the ESI for its support and hospitality.

Finally, we thank the anonymous referee for numerous remarks and suggestions.


\begin{thebibliography}{00}

\bibitem{AC} A. Andretta, R. Camerlo, Analytic sets of reals and the density
    function in the {C}antor space, \textit{European Journal of Mathematics},
    to appear.

\bibitem{Baumga1973} J.E. Baumgartner, All $\aleph_1$-dense sets of reals can be isomorphic, \textit{Fundamenta Mathematicae} \textbf{79} (1973), 101--106.

\bibitem{baumgartner1981} J.E. Baumgartner, Order types of real numbers and other uncountable orderings, in I. Rival (ed.), \textit{Ordered sets}, 1982, 239--277.

\bibitem{cacama} R. Camerlo, R. Carroy, A. Marcone, Epimorphisms between linear orders, \textit{Order} \textbf{32} (2015), 387--400.

\bibitem{distances} R.Camerlo, A. Marcone, L. Motto Ros, Polish metric spaces
    with fixed distance set, arXiv:1809.06588.

\bibitem{fournier} K. Fournier, \textit{Wadge hierarchy of differences of co-analytic sets}, \textit{The Journal of Symbolic Logic} \textbf{81} (2016), 201--215.

\bibitem{HNS} A. Hajnal, Z. Nagy, L. Soukup, On the number of certain subgraphs of graphs without large cliques and independent subsets, in A. Baker, B. Bollob\'{a}s and A. Hajnal (eds.), \textit{A tribute to Paul Erd\H{o}s}, 1990, 223--248.

\bibitem{cdst} A.S. Kechris, \textit{Classical descriptive set theory},
     Springer-Verlag 1995.

\bibitem{Landra1979} C. Landraitis, A combinatorial property of
    the homomorphism relation between countable order types, \textit{The Journal of
    Symbolic Logic} \textbf{44} (1979), 403--411.

\bibitem{louveauold} A. Louveau, Two results on Borel orders, \textit{The Journal of
    Symbolic Logic} \textbf{54} (1989), 865--874.

\bibitem{louveau} A. Louveau, \textit{Effective descriptive set theory}, unpublished notes.

\bibitem{MT} J.T. Moore, S. Todorcevic, Baumgartner's
    isomorphism problem for $\aleph_2$-dense suborders of $\R$,
    \textit{Archive for Mathematical Logic} \textbf{56} (2017), 1105--1114.

\bibitem{Mon06} A. Montalb\'{a}n, Indecomposable linear orderings
    and hyperarithmetic analysis, \textit{Journal of Mathematical Logic} \textbf{6} (2006), 89--120.

\bibitem{moschovakis} Y.N. Moschovakis, {\it Descriptive set theory}, American Mathematical Society 2009.


\bibitem{Rosens1982} J.G. Rosenstein, {\it Linear orderings}, Academic
    Press 1982.

\bibitem{Soukup} D.T. Soukup, Uncountable strongly surjective linear orders,
    \textit{Order}, 2018.

\end{thebibliography}
\end{document}